\documentclass[12 pt]{article}        	
\usepackage{amsfonts, amssymb}					 
\usepackage[colorlinks=true,citecolor=teal,linkcolor=blue]{hyperref}
\usepackage{tikz}
\usepackage{tikz-cd}
\usepackage{quiver} 
\usepackage{xcolor}
\usepackage{graphicx} 
\usepackage{amsmath}
\usepackage{mathrsfs}
\usepackage{mathabx}
\usepackage{comment}
\usepackage{setspace} 
\usepackage{enumerate}
\usepackage{amsthm}
\usepackage{multicol}

\usepackage{url}

\newcommand{\eqn}[0]{\begin{array}{rcl}}
\newcommand{\eqnend}[0]{\end{array} }  	
\newcommand\norm[1]{\left\lVert#1\right\rVert}
\newcommand\restr[2]{{
  \left.\kern-\nulldelimiterspace 
  #1
  \vphantom{\big|} 
  \right|_{#2} 
  }}
  \newcommand{\inn}[1]{\langle#1\rangle}
\def\P{\mathbb{P}}
\def\E{\mathbb{E}}

\def\R{\mathbb{R}}
\numberwithin{equation}{section}
\newtheorem{theorem}{Theorem}[section]
\newtheorem{prop}[theorem]{Proposition}

\theoremstyle{definition}
\newtheorem{definition}[theorem]{Definition}

\theoremstyle{definition}
\newtheorem{remark}[theorem]{Remark}

\theoremstyle{definition}
\newtheorem{problem}[theorem]{Problem}
\newtheorem{conjecture}[theorem]{Conjecture}

\theoremstyle{definition}

\newtheorem{corollary}[theorem]{Corollary}

\newtheorem{lemma}[theorem]{Lemma}
\theoremstyle{definition}
\newtheorem{notation}[theorem]{Notation}
\begin{document}	

\title{Optimal Covariance Estimates for Schr\"odinger Semigroups with White Noise in $d=1,2$}
\author{Youssef Djellouli\footnote{Syracuse University, Department of Mathematics. Email:
\href{mailto:mydjello@syr.edu}{mydjello@syr.edu}.} \and Pierre Yves Gaudreau Lamarre\footnote{Syracuse University, Department of Mathematics. Email: \href{mailto:pgaudrea@syr.edu}{pgaudrea@syr.edu}.}}
\date{}
\maketitle

\begin{abstract}
For $d\in\{1,2\}$, let $H=-\frac{1}{2}\Delta + V +\xi$ be the random Schr\"odinger operator on $L^2(\R^d)$ where $\xi$ is a standard Gaussian white noise
and $V$ is a deterministic potential with power-law growth at infinity. Using a Feynman-Kac formula for the trace of the Schr\"odinger semigroup, we give optimal asymptotic upper and lower bounds on the covariance of $\mathrm{Tr}[e^{-sH}]$ and $\mathrm{Tr}[e^{-tH}]$ as $s,t\to0$ through estimates on Brownian bridge local times. These estimates are a significant improvement on previous bounds in the case $d=1$ and are the first of their kind for $d=2$. As an application of these new estimates, we prove a quantitative hyperuniformity-type property and decorrelation rate for the trace as $s,t\to0$.
\end{abstract}

\section{Introduction}

\subsection{Schr\"odinger Operators with White Noise}\label{section: white noise}

For $d\geq1$, let $V:\mathbb R^d\to\mathbb R$ be a deterministic function with at least power-law growth at infinity (see \eqref{Equation: V Lower Bound}),
and let $\xi$ be a standard Gaussian white noise.
Informally, $\xi$ can be viewed as a centered Gaussian process
on $\mathbb R^d$ whose covariance function is
the Dirac delta distribution.
In this paper, we are interested in the statistical properties of the eigenvalues of the random Schrödinger operator
\begin{align}
\label{Equation: H}
H=-\tfrac12\Delta+V+\xi,
\end{align}
which acts on sufficiently regular functions
in $L^2(\mathbb R^d)$. In physical terms, this
consists of studying the energy levels of an
isolated quantum particle that is subjected
to the confining potential $V$ and impurities/disorder modelled by $\xi$.

The motivation for using a white noise perturbation in \eqref{Equation: H} can in part
be explained by the fact that $\xi$ is the universal scaling limit of i.i.d. lattice noises with finite variance  (as the spacing between lattice points vanishes). Thus, $\xi$ is arguably one of the most natural models of unstructured noise on $\mathbb R^d$, and can be viewed as a continuum analogue of Anderson's celebrated tight-binding model \cite{Anderson}.
However, despite this natural interest,
most rigorous results concerning these models are quite recent, especially for $d\geq2$.
Much of the reason for this has to do with
the fact that Gaussian white noise is highly singular (in fact, a Schwartz distribution), which makes the rigorous definition and study of operators of the form
\eqref{Equation: H} technically challenging.

More specifically, for $d=1$, the rigorous theory of Schr\"odinger operators with white noise was pioneered by Fukushima and Nakao \cite{FukushimaNakao}. Therein, they showed how the operator $-\tfrac12\Delta+\xi$ on the bounded interval $[0,L]$ (with Dirichlet boundary conditions) can be constructed through its quadratic form, wherein integrals against $\xi$ are interpreted as stochastic integrals. They then used their construction to study the integrated density of states of the operator on large intervals, thus confirming a physics prediction due to Frisch, Halperin, and Lloyd \cite{FrischLloyd,Halperin}. Over the past 30 years, the spectral properties of that same operator (such as eigenvalue tails and localization-to-delocalization transition) were studied in great detail in a series of works by various authors
\cite{CambroneroMcKean,CambroneroRider,McKean,DumazLabbe,DumazLabbe2,DumazLabbe3,DumazLabbe4}.
Finally, more general one-dimensional Schr\"odinger operators (e.g., with nonzero $V$) with white noise were considered by various authors, motivated in part by applications in multivariate statistics and the study of simple
interacting particle systems (namely, the $\beta$-ensembles); e.g., \cite{BloemendalVirag1,BloemendalVirag2,DumazLabbe5,GLFK,LGL,AiryRigidity,Minami,Minami2,RamirezRiderVirag}.

For $d\geq2$, the rigorous construction of Schr\"odinger operators with white noise is substantially more difficult, and typically requires sophisticated renormalization theories, such as Martin Hairer's regularity structures \cite{Hairer}
or the paracontrolled calculus of
Gubinelli, Imkeller, and Perkowski
\cite{GIP}. Using this technology, Allez and Chouk constructed the operator $-\tfrac12\Delta+\xi$ on the two-dimensional torus \cite{AllezChouk}, and studied the tails of its eigenvalues. Then, Labb\'e constructed the same operator for $d\leq3$ on the
torus or a box with Dirichlet boundary conditions \cite{Labbe}, and also studied
the eigenvalue tails. Since then,
other properties of $-\tfrac12\Delta+\xi$'s spectrum on various spaces and manifolds of dimension $d\leq3$ were investigated, such as exact eigenvalue asymptotics, density of states, and spectral geometry; e.g.,
\cite{BailleulDangMouzard,ChoukVanZuijlen,GLP,HsuLabbe,Matsuda,MatsudaVanZuijlen,Mouzard}.

\begin{remark}
To our knowledge, no Schr\"odinger operators have been constructed with white noise in $d\geq4$. As explained in several papers
(e.g. \cite[(PAM) and surrounding paragraph]{Hairer} or \cite[Paragraph between (2) and (3)]{ChoukGairingPerkowski}), this has to do with the balance between the scaling/regularity properties of the Laplacian and the white noise, whereby the noise
``overpowers" the Laplacian when $d\geq4$ even if an additive renormalization is applied. This is consistent with two
recent works concerning the $d=4$ case \cite{DengShen,GabrielRosati}, which show that
a logarithmic attenuation of the noise leads to
nontrivial fluctuations in $-\tfrac12\Delta+\xi$'s Green
function on a torus.
\end{remark}

\subsection{Main Result}
\label{sec: Main Result}

In this paper, our aim is to study the fluctuations and correlations of $H$'s
large eigenvalues. The tool that we use
for this purpose is the semigroup covariance
\[C(s,t)=\mathrm{Cov}\left[\mathrm{Tr}[e^{-s H}],\mathrm{Tr}[e^{-t H}]\right],\qquad s,t>0,\]
which we analyze using the Feynman-Kac formula.
In order to ensure that this object is well-defined
and amenable to computation,
throughout the paper we assume the following:
\begin{enumerate}
\item $d=1,2$. This is because for $d=3$, it is known that $\mathrm{Tr}[e^{-tH}]$'s moments cannot be finite (see, e.g., \cite[Theorem 2 and the following paragraph]{Labbe}), thus rendering the quantity $C(s,t)$ meaningless.
\item There exist constants $\kappa,a,b>0$ such that
\begin{align}
\label{Equation: V Lower Bound}
V(x)\geq a|x|^\kappa-b,\qquad x\in\mathbb R^d.
\end{align}
This is meant to ensure that $H$ has a purely discrete spectrum that is bounded below, and that $e^{-tH}$ is trace class.
\item $V_+=\max\{0,V\}\in K_d^{\mathrm{loc}}$,
where $K_d^{\mathrm{loc}}$ denotes the local
Kato class (see, e.g., \cite[Section A2]{Simon} for a definition). This is meant to ensure that the Feynman-Kac formula with potential $V$ is well-defined.
\end{enumerate}

\begin{remark}
When $d=1$, the construction of the operator $H$ under the three assumptions above has been carried out in \cite{GLFK}.
When $d=2$, to the best of our knowledge, the construction
of $H$ in this general setting has not been carried out yet.
Following \cite{AllezChouk,Labbe}, we expect that the latter would require regularity
structures or paracontrolled calculus. However, given that we are only interested in the trace $\mathrm{Tr}[e^{-tH}]$ in this paper, we are able to circumvent these
difficulties entirely and construct the trace observable for a fixed $t>0$ as a simple $L^2$ limit
(in the sense of square-integrable
real-valued random variables).
See Section \ref{Section: Trace Set Up} for the details
of $H$'s construction when $d=1$, and $\mathrm{Tr}[e^{-tH}]$'s construction when $d=2$.
\end{remark}

With this in hand, our main result is as follows:

\begin{definition}
We say that $f(\tau)\lesssim g(\tau)$ as $\tau\to\tau_0$ if there exists a constant $c>0$ and a neighborhood $U$ of $\tau_0$ such that $f(\tau)\leq c g(\tau)$ for all $\tau\in U$.
We say that $f(\tau)\asymp g(\tau)$ as $\tau\to\tau_0$ if $f(\tau)\lesssim g(\tau)$ and $g(\tau)\lesssim f(\tau)$ as $\tau\to\tau_0$.
\end{definition}

\begin{theorem}\label{Thm: Main Theorem}
It holds that
\begin{align}
\label{eq: Main Upper Bound}
C(s,t)\lesssim\min\{s,t\}^{1-d/2}\max\{s,t\}^{1-d/2-d/\kappa}\qquad\text{ as }(s,t)\to0.
\end{align}
This result is optimal,
in the sense that if there also exist 
constants $c,f>0$ such that
\begin{align}
\label{Equation: V Upper Bound}
V(x)\leq c|x|^\kappa+f,
\qquad x\in\mathbb R^d,
\end{align}
then
\[C(s,t)\asymp\min\{s,t\}^{1-d/2}\max\{s,t\}^{1-d/2-d/\kappa}\qquad\text{as }(s,t)\to0.\]
\end{theorem}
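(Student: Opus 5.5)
The plan is to start from the Feynman--Kac formula for the trace,
\[
\mathrm{Tr}[e^{-tH}]=\int_{\mathbb R^d}\E^{x,x}_t\Big[e^{-\int_0^t V(B^x_t(u))\,du-\langle \xi, L_t(B^x_t)\rangle}\Big]\frac{dx}{(2\pi t)^{d/2}}
\]
(with the renormalisation constant when $d=2$). Here $B^x_t$ is a Brownian bridge from $x$ to $x$ in time $t$ and $L_t$ is its local time. Taking a second independent bridge $\tilde B^y_s$ and using Gaussian moment formulas for $\xi$, I would write
\[
C(s,t)=\iint \frac{dx\,dy}{(2\pi s)^{d/2}(2\pi t)^{d/2}}\,\E\Big[e^{-\int V(\tilde B)-\int V(B)}e^{\frac12\|L_s(\tilde B)\|^2+\frac12\|L_t(B)\|^2}\big(e^{\langle L_s(\tilde B),L_t(B)\rangle}-1\big)\Big],
\]
where in $d=2$ the self-intersection terms $\|L\|^2$ are replaced by their renormalised versions. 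The whole proof then reduces to two-sided estimates on this integral. The key input is that the mutual intersection local time $\langle L_s(\tilde B^y),L_t(B^x)\rangle$ vanishes unless the two bridges meet. This forces $|x-y|\lesssim \sqrt{\max\{s,t\}}$, up to Gaussian tails.

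For the upper bound, assume $s\le t$. I would use $e^{a}-1\le a e^{a}$ and then Hölder's inequality to separate three pieces: the exponential moments of the self- and mutual intersection local times, the potential term, and the factor $\langle L_s,L_t\rangle$. Exponential moments of intersection local times of bridges over short times are bounded uniformly as $s,t\to0$ by Brownian scaling. For $d=2$ this needs the renormalised Le Gall-type bounds, which I take as the main technical input. The potential term gives $e^{-sV(x)}e^{-tV(y)}$ up to constants, because over times $\le t$ the bridges stay within $O(\sqrt t)$ of their endpoints with high probability. Combining this with \eqref{Equation: V Lower Bound}, the $y$-integral $\int e^{-t a|y|^\kappa}dy$ contributes the factor $t^{-d/\kappa}$.

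What remains is to bound $\E\langle L_s(\tilde B^y),L_t(B^x)\rangle=\int p^{\mathrm{br}}_s(z)\,p^{\mathrm{br}}_t(z)\,dz$, where each factor is an integral of bridge transition densities. Integrating in $x$ over a window of size $\sqrt t$ around $y$, this should give order $s\cdot t\cdot t^{-d/2}$. Together with the prefactor $s^{-d/2}t^{-d/2}$ and the integral over $x$ of size $t^{d/2}$ absorbed in that window, I expect to obtain $s^{1-d/2}t^{1-d/2-d/\kappa}$. I would do this bookkeeping carefully, checking it on the scaling $s=t$ first.

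For the lower bound under \eqref{Equation: V Upper Bound}, I would use $e^{a}-1\ge a$ and restrict to the event on which both bridges stay within distance $O(\sqrt t)$ of their endpoints and the self-intersection factors are bounded below. In $d=1$ these factors are $\ge1$. In $d=2$ they are bounded below with positive probability, by Jensen's inequality applied to the renormalised quantity. Then \eqref{Equation: V Upper Bound} gives $e^{-\int V}\ge e^{-C(s+t)(|x|^\kappa+1)}$, and restricting $|x|\lesssim t^{-1/\kappa}$ gives volume $t^{-d/\kappa}$, while $\E\langle L_s,L_t\rangle$ is bounded below on $|x-y|\le\sqrt s$ by the same order as above.

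I expect the main obstacle to be the case $d=2$. There the renormalised self-intersection exponentials must be controlled uniformly in $x$ and in small times, and they must be decoupled from the mutual term without losing the sharp power of $s$. To do this I would first try conditioning on the bridges and applying Cauchy--Schwarz with exponents close to $1$, using the uniform small-time exponential integrability.
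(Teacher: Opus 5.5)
Your skeleton matches the paper: the Feynman--Kac covariance formula, $e^a-1\le ae^a$ and $e^a-1\ge a$, the bounds on $V$ giving the volume factor $\max\{s,t\}^{-d/\kappa}$, and $\int\E[\mathcal C_{t,s}(x,y)]\,dx=st$. The gap is the decoupling step. Hölder applied pointwise in $(x,y)$ does not leave you with $\E\langle L_s,L_t\rangle$. It leaves $\E[\mathcal C_{t,s}(x,y)^q]^{1/q}$ with $q>1$, and in $d=2$ with $s\ll t$ this is not comparable to the first moment. For $|x-y|\asymp\sqrt t$ one has $\E[\mathcal C_{t,s}(x,y)]\asymp s$. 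But the Gaussian formula of Proposition~\ref{Prop: Product Expectation is Gaussian pdf} gives $\E[\mathcal C_{t,s}(x,y)^2]\gtrsim s^2\log(t/s)$: the small loop $\overline B^{y,y}_s$ is hit with probability of order $1/\log(t/s)$, and when hit the MILT is of order $s\log(t/s)$. So the pointwise route gives at best $\max\{s,t\}^{-2/\kappa}$ times a power of $\log(\max\{s,t\}/\min\{s,t\})$, which is not the theorem.

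Exponents close to $1$ do not repair this uniformly. Removing the logarithm needs $q-1\lesssim 1/\log\log(t/s)$, so the conjugate exponent $q'$ diverges. By the scaling $\gamma_t\overset{d}{=}t\gamma_1$, the admissible range $t<\theta_{q'}$ in Proposition~\ref{Prop: 2D SILT Bound} shrinks like $1/q'$, so this breaks down when $s$ is much smaller than $t$. In $d=1$ the pointwise route does work, once the first moment is replaced by $\E[\mathcal C^q]^{1/q}\lesssim s\sqrt t\,e^{-c_q|x-y|^2/t}$ for $s\le t$.

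The missing idea is to integrate over positions \emph{before} applying Hölder. With the coupling $B^{x,x}_t=x+B^{0,0}_t$ one has $\mathcal B^d_{t,s}(x,y)=\mathcal B^d_{t,s}(0,0)$. Moreover, up to a constant, $e^{\mathcal A_{t,s}(x,y)}$ is bounded by the deterministic weight $w(x,y)=e^{-a|t^{1/\kappa}x|^{\kappa\wedge1}-a|s^{1/\kappa}y|^{\kappa\wedge1}}$ times a factor depending only on $M_t,\overline M_s$. The $\kappa\wedge1$ matters: for $\kappa>2$, $tM_t^\kappa$ has no exponential moments, so your ``bridges stay within $O(\sqrt t)$ with high probability'' step needs this care. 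All random factors except the MILT then come out of $\int dx\,dy$, and Hölder is applied to $\int\!\!\int w\,\mathcal C_{t,s}\,dx\,dy$. Its second moment is of the order of the square of its first (Lemma~\ref{lem: t+s lower bound}(b)). Indeed $\int\alpha_{t,s}(x+B,y+\overline B)\,d(x-y)=ts$ deterministically, so integrating over relative positions washes out the hitting fluctuations.

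The $(x,y)$-dependent factor $e^{\mathcal C}$ must be removed first. The paper truncates on $\{\mathcal C\le c\}$ and treats $\{\mathcal C>c\}$ by Markov with high moments, which is smaller than any power of $st$.

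For the lower bound the paper likewise does not restrict to events. It bounds $\E[(e^{-C_\kappa(tM_t^\kappa+s\overline M_s^\kappa)+\mathcal B^d_{t,s}(0,0)}-1)\int\!\!\int w\,\mathcal C\,dx\,dy]$ by Cauchy--Schwarz and the same second-moment lemma. Done pointwise, your event restriction would need $\P(E^c)\ll 1/\log(t/s)$ in $d=2$. Your lower-bound window must also be $|x-y|\lesssim\sqrt{\max\{s,t\}}$: with $|x-y|\le\sqrt s$ you lose a factor $(s/t)^{d/2}$.
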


See Sections \ref{upper bound proof} and \ref{optimality proof} for the proofs of the upper bound and the optimality respectively.

\subsection{Past Results and Motivation}

There have been a few past results that studied upper bounds on $C(s,t)$, mostly focused on the one-dimensional regime. More specifically, for $d=1$,
it was shown in \cite[Theorem 4.1]{LGL} that
\[\mathrm{Var}\big[\mathrm{Tr}[e^{-t H}]\big]=C(t,t)\lesssim t^{1-1/\kappa}\qquad\text{as }t\to0.\]
This matches the upper bound in Theorem \ref{Thm: Main Theorem} in the special case where $d=1$ and $s=t$.
A few years later, in \cite[Proposition 8.3]{AiryRigidity2} (see also \cite[Section 3.3]{AiryRigidity}),
it was shown that if $d=1$ and $\kappa=1$, then
\[C(s,t)\lesssim\left(\frac{\min\{s,t\}}{\max\{s,t\}}\right)^{1/4}\qquad\text{as }(s,t)\to0.\]
This is substantially weaker than the corresponding upper bound in \eqref{eq: Main Upper Bound}
(namely, with the power of $1/2$ replaced by $1/4$).

\begin{remark}
Although this is not made explicit in \cite{AiryRigidity2},
the method of proof used in \cite[Proposition 8.3]{AiryRigidity2} could be extended to $d=1$ and arbitrary $\kappa>0$ and yield the more general upper bound
\[C(s,t)\lesssim\min\{s,t\}^{1/4}\max\{s,t\}^{3/4-1/\kappa}\qquad\text{ as }(s,t)\to0.\]
\end{remark}

The motivation for obtaining these bounds in
\cite{LGL,AiryRigidity,AiryRigidity2} was to
establish a property called number rigidity
(see \cite{GhoshPeres}). For this purpose,
it is sufficient to either prove that $C(t,t)$
vanishes as $t\to0$, or that $C(s,t)$ vanishes
as $s\to0$ for $t>0$ fixed while $C(t,t)$ remains
bounded for small $t$. In particular, for such an application,
an optimal control on $C(s,t)$ is not necessary.

Finally, for $d=2$, it was proved in \cite{GLP}
that if $d=2$ and we consider $H=-\tfrac12\Delta+\xi$ restricted to a bounded domain with Dirichlet boundary---which formally corresponds to $\kappa=\infty$---one has $C(t,t)=O(1)$ as $t\to0$.
This again matches the two-dimensional upper bound in \eqref{eq: Main Upper Bound} if we formally interpret $2/\infty=0$. The motivation for that
result was to study the spectral geometry problem for $-\tfrac12\Delta+\xi$, whereby any feature of
a domain's heat trace that is larger than $O(1)$
as $t\to0$ can still be recovered from $H$'s trace
almost surely.

In this context, from the technical point of view, the main improvements on the past literature in this paper are as follows:
\begin{enumerate}
\item For $d=1$, we significantly improve
the control on $C(s,t)$ for $s\neq t$.
\item For $d=2$, we obtain, to our knowledge, the first
estimates on $C(s,t)$ for general potentials with power-law growth at infinity (i.e., \eqref{Equation: V Lower Bound}).
\item In all cases (i.e., $d=1,2$), we prove that \eqref{eq: Main Upper Bound} is the optimal upper bound on $C(s,t)$ under the growth assumption \eqref{Equation: V Lower Bound}
(via the matching lower bound when \eqref{Equation: V Upper Bound} holds).
\end{enumerate}
Our motivation for obtaining these improvements
comes from the new applications that are made
available once we obtain matching upper and lower
bounds on $C(s,t)$. Namely, as we explain in 
more detail in Section \ref{Section: Applications}, our main result makes it possible
to prove a type of ``semigroup hyperuniformity property" for $H$
(see Definition \ref{def:Hyperuniform in the sense of semigroups}), and to quantify precisely
the asymptotic correlations in $H$'s spectrum
as measured through the semigroup.

\subsection{Applications}
\label{Section: Applications}
In this section, we outline two applications of our results.

\subsubsection{Hyperuniformity in the Sense of Semigroups}

Given a point process $\mathcal X=\{x_k\}_{k\geq1}$ in $\mathbb R^n$ for some $n\geq1$, define the ball counting function
\[N_{\mathrm b}(r)=|\{k\geq1:|x_k|\leq r\}|,\]
and the corresponding
hyperuniformity ratio function
\[R_{\mathrm b}(r)=\frac{\mathrm{Var}[N_{\mathrm b}(r)]}{r^n},\qquad r>0.\]

\begin{definition}\label{Def: Hyperuniformity}
$\mathcal X$ is hyperuniform if $R_{\mathrm b}(r)=o(1)$ as $r\to\infty$.
\end{definition}

\begin{remark}
There is also a widely known characterization
of the strength of hyperuniformity (i.e., Class I, II, and III) due to Torquato \cite{HyperuniformSurvey}, which depends on
the rate at which $R_{\mathrm b}(r)$ vanishes.
\end{remark}

In words, a point process is called hyperuniform if the variance of the number of points in a ball grows more slowly than the ball's volume. This property is in sharp contrast with
the fluctuations that occur in homogeneous Poisson processes, which are such that \[\mathrm{Var}[N_{\mathrm b}(r)]\asymp\mathbb E[N_{\mathrm b}(r)]\asymp r^n\qquad\text{as } r\to\infty.\]

Informally, hyperuniformity is a ``global rigidity" property which means that despite the local disorder, at sufficiently large scales, the process behaves as a lattice.
In particular, hyperuniformity is closely related to several other notions, such as number rigidity or eigenvalue rigidity.
Hyperuniform processes are of particular interest to physicists and materials scientists due to applications arising from advantageous properties that come from being ``between" disordered liquid and ordered crystal states; see \cite{HyperuniformSurvey} for a survey.

Hyperuniformity and other rigidity phenomena have been studied for several decades in both physics (see e.g.,  \cite{1DCoulombGas,ReiszGasRigidityHyperuniformity,HyperuniformityOrderMetrics-Physics,WeylHeisenbergHyperuniformity,GinibreWeylHeisenbergHyperuniformity}) and mathematics (see e.g., \cite{NumberRigiditySuperhomogeneousFields, CoxPointProcesses,GibbsNon-Hyperuniform,PlasmaRigidity,StealthyHyperuniformity}).  Hyperuniformity and rigidity have been studied for point processes including Coulomb gases (e.g., \cite{CoulombChargeFluctuations,1DCoulombGas,Chatterjee,CoulombGasHyperuniformityGroundstates,CoulombLocalLawRigidity,ReiszGasRigidityHyperuniformity,StructureFactor,Leble}), eigenvalue processes of random unitary (e.g., \cite{UnitaryRandomMatrix}), Hermitian (e.g., \cite{HermitianRandomMatrix,Wigner}), and non-Hermitian (e.g. \cite{Ginibre,GinibreWeylHeisenbergHyperuniformity}) matrices, zeros of random analytic functions (e.g., \cite{zeros}), as well as many other processes (see e.g., \cite{CoxPointProcesses,NumberRigiditySuperhomogeneousFields,PlasmaRigidity,GibbsNon-Hyperuniform}).

Turning back to the topic of this paper,
let $\mathcal L=\sum_k\delta_{\lambda_k}$
denote $H$'s eigenvalue point process.
Given that $\mathcal L$ is not stationary and has a varying intensity over $\mathbb R$ (indeed, $\mathcal L$ is bounded below and its points grow at a varying rate, the latter of which should be determined by the Weyl law of the corresponding deterministic operator $-\frac12\Delta+V$), the traditional definition of
hyperuniformity is arguably not well-suited to
study its fluctuations.
A more natural analogue to Definition \ref{Def: Hyperuniformity} would be the vanishing of the half-line hyperuniformity ratio function
\begin{align}
\label{eqn: HL Ratio}
R_{\mathrm h}(\lambda)=\frac{\mathrm{Var}[N(\lambda)]}{\mathbb E[N(\lambda)]},\qquad \lambda\in\mathbb R
\end{align}
as $\lambda\to\infty$, where
\[N(\lambda)=|\{k\geq1:\lambda_k\leq\lambda\}|,\qquad\lambda\in\mathbb R\]
denotes $H$'s eigenvalue counting function. Indeed, such a property would imply
that $H$'s eigenvalue fluctuations are smaller than that of
a nonhomogeneous Poisson process whose intensity grows at the same rate as $\mathbb E[N(\lambda)]$ as $\lambda\to\infty$.

While Theorem \ref{Thm: Main Theorem} cannot be used to prove the vanishing of $R_{\mathrm h}(\lambda)$ directly,
it can nevertheless be used to establish yet another analogous property formulated using $H$'s semigroup.
More specifically, the classical Abelian/Tauberian theorems (e.g., \cite[\S XIII.5]{Feller}) relating cumulative distributions to Laplace transforms can be informally interpreted in the present context as the claim that
\begin{align}\label{equation: trace count relation}
\mathrm{Tr}[e^{-tH}]\approx N(1/t)\qquad\text{as } t\to0.
\end{align}
This leads us to the consideration of the semigroup hyperuniformity ratio function
\[R_{\mathrm s}(t)=\frac{\mathrm{Var}\left[\mathrm{Tr}[e^{-t H}]\right]}{\mathbb E\left[\mathrm{Tr}[e^{-t H}]\right]}=\frac{C(t,t)}{\mathbb E\left[\mathrm{Tr}[e^{-t H}]\right]},\qquad t>0,\]
and posit the following definition:

\begin{definition}
\label{def:Hyperuniform in the sense of semigroups}
The eigenvalue point process $\mathcal L$ is hyperuniform in the sense of semigroups if $R_{\mathrm s}(t)\to0$ as $t\to0$. 
\end{definition}

As a first step toward establishing the hyperuniformity of $\mathcal L$, we have the following:

\begin{prop}\label{Prop: Expectation Bound}
If \eqref{Equation: V Lower Bound} and \eqref{Equation: V Upper Bound} both hold, then
\[\mathbb E\left[\mathrm{Tr}[e^{-t H}]\right]\asymp t^{-d/2-d/\kappa}
\qquad\text{as }t\to0.\]
\end{prop}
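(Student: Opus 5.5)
We plan to prove the estimate with the Feynman--Kac formula for the trace, in the form that will be set up in Section~\ref{Section: Trace Set Up}. Write $B^{x,x}_t$ for a Brownian bridge from $x$ to $x$ on $[0,t]$, and $L_t^a(B^{x,x}_t)$ for its local time at level $a$ (for $d=2$, the renormalized self-intersection local time). Then
\[
\mathbb E\big[\mathrm{Tr}[e^{-tH}]\big]=\int_{\mathbb R^d}\frac{e^{-|x|^2/2t}}{(2\pi t)^{d/2}}\bigg|_{x=0}\,\mathbf E\Big[e^{-\int_0^t V(B^{x,x}_t(u))\,du}\;\mathbb E_\xi\big[e^{-\int_0^t\xi(B^{x,x}_t(u))\,du}\big]\Big]\,dx .
\]
Because $\xi$ is Gaussian, the inner $\xi$-expectation is $\exp\big(\tfrac12\int L_t^a(B^{x,x}_t)^2\,da\big)$ when $d=1$. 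When $d=2$ it is the exponential of the renormalized self-intersection local time, which is what the renormalized $L^2$-limit construction of the trace produces. By Brownian scaling, $B^{x,x}_t(u)=x+\sqrt t\,B^{0,0}_1(u/t)$, and this self-interaction term equals, in law, $t^{2-d/2}$ times the corresponding quantity for the unit bridge (for $d=2$, up to a deterministic renormalization that is $O(1)$ or $O(\log)$ and is absorbed into the constant). So the prefactor $e^{\frac{t^{2-d/2}}{2}\gamma}$, with $\gamma$ the unit-bridge functional, has all moments uniformly bounded as $t\to0$, and it converges to $1$ in $L^p$. The $d=2$ exponential integrability comes from Le Gall's/Varadhan's result, and we take it as given from the trace construction.

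For the upper bound, we apply H\"older's inequality with exponents $p,q$ to separate the $V$ and noise factors. The noise factor is bounded by a constant, which leaves
\[
t^{-d/2}\int_{\mathbb R^d}\mathbf E\big[e^{-q\int_0^tV(x+\sqrt tB^{0,0}_1(u/t))\,du}\big]^{1/q}\,dx .
\]
Using \eqref{Equation: V Lower Bound}, we restrict to the event that $\sup_u|\sqrt t B^{0,0}_1|\le |x|/2$. On that event the exponent is at most $-qt(a2^{-\kappa}|x|^\kappa-b)$. On the complement we use the Gaussian tail $e^{-c|x|^2/t}$ and bound the exponential by $e^{qbt}$. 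The first piece integrates to $\lesssim\int e^{-ct|x|^\kappa}dx\asymp t^{-d/\kappa}$. The second integrates to $O(t^{d/2})$, and after the prefactor $t^{-d/2}$ it contributes $O(1)$, which is negligible. Together these give $\lesssim t^{-d/2-d/\kappa}$.

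For the lower bound, the exponential of the noise term has $\mathbb E_\xi[\cdot]\ge 1$ pointwise when $d=1$, by Jensen's inequality, because the term is centered; alternatively, we can simply use the bound $\ge1$. When $d=2$, after renormalization we instead use Jensen's inequality conditionally. Either way, it suffices to bound $\mathbf E[\exp(-\int_0^t V(B^{x,x}_t))\,\cdot\,\text{noise}]$ from below. We restrict to the event $\{\sup_u|\sqrt tB^{0,0}_1(u/t)|\le 1\}$, which has probability bounded below uniformly in $t$ (indeed, it tends to $1$). On this event \eqref{Equation: V Upper Bound} gives an exponent of at least $-t(c(|x|+1)^\kappa+f)$. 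For the noise factor, Jensen's inequality applied to $\mathbf E[\,\cdot\mid A]$ gives a lower bound of $\exp(\mathbf E[\text{log-noise}\mid A])$, which is bounded below since the conditional mean is $O(1)$. Integrating $t^{-d/2}e^{-ct(|x|+1)^\kappa}$ over $x$ gives $\gtrsim t^{-d/2-d/\kappa}$.

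The main obstacle is the $d=2$ noise factor: justifying both the uniform moment bound in the upper bound and the Jensen lower bound for the renormalized self-intersection exponential. This requires the uniform exponential integrability of the renormalized local time under the scaling $t\to0$, and the fact that the renormalization constant stays bounded at small $t$. I would first try to reduce to known exponential-moment estimates for the renormalized self-intersection local time of the planar Brownian bridge, which hold for small coupling, here $t^{2-d/2}=t\to0$.
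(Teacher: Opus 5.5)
Your proof is correct. It shares the paper's skeleton: the Feynman--Kac formula \eqref{eqn: FK Expectation}, sandwiching $V$ between $a|x|^\kappa-b$ and $c|x|^\kappa+f$, and the scaling $\mathcal D^d_t\overset{d}{=}t^{(4-d)/2}\mathcal D^d_1(0)$ combined with Propositions \ref{Prop: 1D SILT Bound} and \ref{Prop: 2D SILT Bound}. Where you differ is in how you extract the asymptotics.

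The paper rescales $u\mapsto ut$ and $x\mapsto t^{-1/\kappa}x$, so that each side of the sandwich equals $t^{-d/2-d/\kappa}$ times an integral. It then shows this integral converges to $\int_{\mathbb R^d}e^{-\theta|x|^\kappa}dx$, using dominated convergence (via \eqref{eqn: min triangle inequality} and exponential moments of $M_1$) together with uniform integrability. This handles the noise factor in both directions at once, since $t^{(4-d)/2}\mathcal D^d_1(0)\to0$, and it also identifies the limiting constants.

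You avoid any interchange of limits. For the upper bound you use H\"older plus the Gaussian tail of $M_t$ on $\{M_t>|x|/2\}$. For the lower bound you restrict to $\{M_t\le1\}$ and apply a conditional Jensen step. That Jensen step is the one new ingredient your route needs: in $d=2$ the centered $\gamma_t$ can be negative, so $e^{\gamma_t}\ge1$ is unavailable. It works because $|\mathbb E[\gamma_t\mid M_t\le1]|\le t\,\mathbb E|\gamma_1|/\mathbb P[M_1\le1]$ for $t\le1$. Your route is more elementary with explicit constants; the paper's gives the sharper limiting statement.

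One statement should be corrected. The deterministic $d=2$ renormalization cannot be ``$O(\log)$'' in the exponent and still be absorbed into the constant, since a shift of order $\log(1/t)$ would change the power of $t$. What actually appears is the factor $e^{t\log t/2\pi}$ in $\mathcal P^2(t,t)^{1/2}$, which tends to $1$. It comes from Lemma \ref{Lem: Approx Silt Expectation} with the $t$-independent $c_\epsilon$. Quoting \eqref{eqn: FK Expectation} directly settles this point.
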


See Section \ref{expectation bound proof} for proof. With this in hand, we can then obtain the following as an immediate corollary of 
Theorem \ref{Thm: Main Theorem}:

\begin{corollary}\label{cor: hyperuniformity ratio}
If \eqref{Equation: V Lower Bound} and \eqref{Equation: V Upper Bound} both hold, then
\[R_{\mathrm s}(t)\asymp t^{2-d/2}
\qquad\text{as }t\to0.\]
\end{corollary}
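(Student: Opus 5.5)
The corollary follows by dividing the two-sided estimate of Theorem \ref{Thm: Main Theorem} by the two-sided estimate of Proposition \ref{Prop: Expectation Bound}. Both results hold under the standing assumptions together with \eqref{Equation: V Lower Bound} and \eqref{Equation: V Upper Bound}, which are exactly the hypotheses of the corollary.

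First, I restrict the optimality statement of Theorem \ref{Thm: Main Theorem} to the diagonal $s=t$. There $\min\{s,t\}=\max\{s,t\}=t$, so there are constants $c_1,c_2>0$ and a neighborhood of $0$ on which
\[c_1\,t^{2-d-d/\kappa}\le C(t,t)\le c_2\,t^{2-d-d/\kappa}.\]
The exponent is $(1-d/2)+(1-d/2-d/\kappa)=2-d-d/\kappa$. Some care is needed to pass from a relation valid as $(s,t)\to0$ to one valid as $t\to0$. The neighborhood of $(0,0)$ in the definition of $\lesssim$ contains all diagonal points $(t,t)$ with $t$ small, so the bound transfers directly.

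Second, Proposition \ref{Prop: Expectation Bound} gives constants $c_3,c_4>0$ with
\[c_3\,t^{-d/2-d/\kappa}\le \mathbb E\big[\mathrm{Tr}[e^{-tH}]\big]\le c_4\,t^{-d/2-d/\kappa}\]
for small $t$. In particular the expectation is strictly positive there, so the ratio $R_{\mathrm s}(t)$ is well defined.

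Finally, I divide: the upper bound on $C(t,t)$ over the lower bound on the expectation, and the lower bound on $C(t,t)$ over the upper bound on the expectation. This gives
\[\frac{c_1}{c_4}\,t^{\gamma}\le R_{\mathrm s}(t)\le \frac{c_2}{c_3}\,t^{\gamma},\qquad \gamma=\Big(2-d-\frac d\kappa\Big)-\Big(-\frac d2-\frac d\kappa\Big)=2-\frac d2.\]
The $d/\kappa$ terms cancel, so the exponent $2-d/2$ is independent of $\kappa$, as claimed.

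There is no real obstacle here. The only points to check are the domain of validity of the two-sided bounds near $0$ and the positivity of the denominator, both handled above. As a consequence, $R_{\mathrm s}(t)\to0$ for $d=1,2$, so $\mathcal L$ is hyperuniform in the sense of semigroups.
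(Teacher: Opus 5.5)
Your proposal is correct and is the paper's argument: the paper calls the corollary an immediate consequence of specializing Theorem \ref{Thm: Main Theorem} to $s=t$, which gives $C(t,t)\asymp t^{2-d-d/\kappa}$, and dividing by $\mathbb E[\mathrm{Tr}[e^{-tH}]]\asymp t^{-d/2-d/\kappa}$ from Proposition \ref{Prop: Expectation Bound}. Your remarks on restricting the $(s,t)\to0$ neighborhood to the diagonal, and on the denominator being positive for small $t$, supply the details the paper leaves implicit.
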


Thus, we not only obtain a semigroup hyperuniformity property, but we can also quantify the vanishing rate of the hyperuniformity ratio and identify that the latter only depends on the dimension, not the potential's growth.

\begin{remark}
Let $\{p_k\}_{k\geq1}$ denote a Poisson process on $[0,\infty)$
with intensity measure $\nu$ such that $\nu([0,\lambda])\sim \lambda^{d/2+d/\kappa}$ as $\lambda\to\infty$. Then, elementary calculations reveal that
\[\mathbb E\left[\sum_{k=1}^\infty e^{-tp_k}\right]\asymp t^{-d/2-d/\kappa}\asymp\mathrm{Var}\left[\sum_{k=1}^\infty e^{-tp_k}\right]\qquad\text{as }t\to0.\]
Definition \ref{def:Hyperuniform in the sense of semigroups}
is thus consistent with the usual interpretation of hyperuniformity, namely, large scale fluctuations smaller than that of a Poisson process with a corresponding intensity.
\end{remark}

\subsubsection{Semigroup Correlations}

Consider
\[\rho(s,t)=\mathrm{Corr}\left[\mathrm{Tr}[e^{-s H}],\mathrm{Tr}[e^{-t H}]\right]=\frac{C(s,t)}{\sqrt{C(s,s)C(t,t)}},\qquad s,t>0,\]
where $\mathrm{Corr}$ denotes the correlation.
As an immediate corollary of Theorem \ref{Thm: Main Theorem},
we obtain the following correlation estimate:

\begin{corollary}\label{cor: correlation scaling}
If \eqref{Equation: V Lower Bound} and \eqref{Equation: V Upper Bound} both hold, then
\[\rho(s,t)
\asymp\left(\frac{\min\{s,t\}}{\max\{s,t\}}\right)^{d/2\kappa}
\qquad\text{as }(s,t)\to0.\]
\end{corollary}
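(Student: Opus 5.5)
The plan is to plug the two-sided estimate of Theorem \ref{Thm: Main Theorem} into the definition of $\rho(s,t)$ and track the exponents. Both \eqref{Equation: V Lower Bound} and \eqref{Equation: V Upper Bound} are assumed, so the theorem gives constants $0<c_1\le c_2$ and a neighborhood $U$ of $(0,0)$ such that
\[
c_1\,\min\{s,t\}^{1-d/2}\max\{s,t\}^{1-d/2-d/\kappa}\le C(s,t)\le c_2\,\min\{s,t\}^{1-d/2}\max\{s,t\}^{1-d/2-d/\kappa}
\]
for all $(s,t)\in U$. We shrink $U$ to a box $(0,\varepsilon)^2$. Then the diagonal points $(s,s)$ and $(t,t)$ also lie in $U$ whenever $(s,t)$ does.

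Specializing to the diagonal gives $C(t,t)\asymp t^{2-d-d/\kappa}$. In particular $C(t,t)>0$ for small $t$, so $\rho$ is well defined there. By symmetry we may assume $s\le t$. Then
\[
\sqrt{C(s,s)C(t,t)}\asymp s^{1-d/2-d/(2\kappa)}\,t^{1-d/2-d/(2\kappa)}.
\]
For the numerator, the theorem gives $C(s,t)\asymp s^{1-d/2}t^{1-d/2-d/\kappa}$.

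Dividing the numerator bound by the denominator bound, the powers $s^{1-d/2}$ and $t^{1-d/2}$ cancel. What remains is $s^{d/(2\kappa)}t^{-d/\kappa+d/(2\kappa)}=(s/t)^{d/(2\kappa)}$. The implicit constants are $c_1/c_2$ and $c_2/c_1$, uniformly on the box. This gives the claim, with $\min\{s,t\}/\max\{s,t\}$ replacing $s/t$ in general.

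There is no real obstacle here. The only point needing care is that the uniformity in Theorem \ref{Thm: Main Theorem} holds on a full two-dimensional neighborhood of $(0,0)$, which is what the definition of $\asymp$ as $(s,t)\to0$ provides. Choosing that neighborhood to be a square makes the diagonal terms in the denominator automatically covered by the same bounds.
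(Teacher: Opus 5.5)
Your proposal is correct and matches the paper, which states this as an immediate corollary of Theorem \ref{Thm: Main Theorem}: you substitute the two-sided bound into $C(s,t)/\sqrt{C(s,s)C(t,t)}$, and the exponents reduce to $(\min\{s,t\}/\max\{s,t\})^{d/(2\kappa)}$. Taking the neighborhood to be a square box $(0,\varepsilon)^2$, so that the diagonal terms in the denominator fall under the same bounds, is exactly the right way to make the implicit uniformity precise.
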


Recalling the relation \eqref{equation: trace count relation}, we can interpret this as a statement about the correlations of a smooth
analog of the counting function $N$ when evaluated for eigenvalues below the thresholds $1/s$ and $1/t$. The precise rate of this decorrelation reflects what we expect from physical heuristics. Indeed, we expect that the eigenvalues $\lambda_k$ and eigenfunctions $\phi_k$ of the operator $H$ are given by the solutions to the variational problem

\[\lambda_k = \inf_{\substack{\phi_k\in \mathcal Q(H), \norm{\phi_k}=1\\\phi_k\perp\phi_1,...,\phi_{k-1}}} \frac12\int_{\R^d}|\nabla\phi_k(x)|^2dx+\int_{\R^d}\phi_k(x)^2\big(\xi(x)+V(x)\big)dx,\]
where $\mathcal Q(H)$ denotes $H$'s form domain.

\begin{remark}
This equality is literal when $d=1$
since $H$ is defined through its quadratic form.
However, when $d=2$ the white
noise requires a renormalization. In that case,
the equality should be viewed as an informal heuristic.
\end{remark}

In order to minimize the first integral, the eigenfunctions are incentivized to spread out. On the other hand, in order to minimize the second integral, the eigenfunctions want to concentrate on the points where the potential and the noise are the smallest. In particular, eigenfunctions that concentrate in the same areas pick up the same noise from $\xi$ and so this competition between the two terms is what drives the eigenvalue correlations or lack thereof. Consequently, we expect a larger dimension to give the eigenfunctions $\phi_k$ more space to separate and hence decorrelate faster, whereas a stronger confining potential forces them to remain in a smaller area and ``reuse" the same noise $\xi(x)$, increasing their correlation. Corollary \ref{cor: correlation scaling} demonstrates the precise quantitative effects of these competing phenomena when measuring eigenvalue correlations through the semigroup's trace.

\begin{remark}
In \cite{FluctuationsLatticeHamiltonian} and \cite{DongJianYuan}, a more literal
manifestation of this mechanism was observed for lattice
Hamiltonians with i.i.d. noise and $-\tfrac12\Delta+\xi$ on a one-dimensional interval, respectively. More specifically, in \cite[(1.11)]{FluctuationsLatticeHamiltonian}, we see that the
above heuristic of correlations coming from ``reusing"
the same noise manifests quite literally as an eigenfunction
inner product; in \cite[Theorem 1.1]{DongJianYuan},
the random fluctuations of high eigenvalues manifest as
a stochastic Rayleigh-Schr\"odinger-type correction. Thus, Corollary \ref{cor: correlation scaling} can be viewed as a first step toward exploring this mechanism
in general Anderson models with nontrivial
noise and coercive potential in the continuum.
\end{remark}

\begin{remark}
The phenomena observed in Corollary \ref{cor: correlation scaling}
and \cite{FluctuationsLatticeHamiltonian,DongJianYuan} are in sharp contrast with
what is typically observed in full-space Anderson models with no deterministic potential (i.e. $V=0$), i.e., a complete decorrelation effect; see, e.g., \cite{DecorrelationEstimatesDiscrete,SpectralStatsLocalizedRegime}.
\end{remark}

\subsection{Open Problems}

The main results of this paper and their applications in hyperuniformity and the analysis of eigenvalue correlations lead to some natural follow-up questions. In this section, we share some of the problems and follow-ups that we consider to be the most promising.

\subsubsection{Half-Line Hyperuniformity for $d=1,2,3$}

Recall the half-line hyperuniformity ratio, $R_{\mathrm h}(\lambda)$, which we defined earlier in \eqref{eqn: HL Ratio}.
By combining Corollary \ref{cor: hyperuniformity ratio} with
the informal Abelian/Tauberian relation \eqref{equation: trace count relation}, one is naturally led to the following conjecture:

\begin{conjecture}
\label{conj: Hyperuniform}
For $d=1,2,3$, one has $R_{\mathrm h}(\lambda)\to0$
as $\lambda\to\infty$ at an explicit rate
that depends on $d$.
\end{conjecture}

While the moments of $\mathrm{Tr}[e^{-t H}]$ are known to be infinite when $d=3$ (as discussed in Section \ref{sec: Main Result}), it appears as though
the moments of $N(\lambda)$ do not suffer from the same
problem; see, e.g., \cite[Lemma 5.16]{MatsudaVanZuijlen}
(more specifically, the statement that
$\mathbb E[\overline{\boldsymbol N}^N(U,\lambda)^m]<\infty$ therein)
for a similar statement in the case of $H=-\tfrac12\Delta+\xi$ on a bounded domain.

The Weyl law proved in \cite[Proposition 5.17]{MatsudaVanZuijlen} suggests that the technology
necessary to calculate asymptotics of $\mathbb E[N(\lambda)]$
is already available. Thus, the main innovation necessary to
prove Conjecture \ref{conj: Hyperuniform} appears to be
an ability to estimate $\mathrm{Var}[N(\lambda)]$.
Toward this end, we propose the following:

\begin{problem}
\label{prob: 1}
Construct the eigenvalue counting function $N(\lambda)$
of the operator $H$ in \eqref{Equation: H}
for general choices of $V$ and $d=1,2,3$. Then, prove (or disprove) Conjecture \ref{conj: Hyperuniform}.

\end{problem}

\subsubsection{Eigenvalue Counting Correlations for $d=1,2,3$}

Similarly, we consider the eigenvalue correlation function
\[\chi(\lambda,\mu)=\mathrm{Corr}\big[N(\lambda),N(\mu)\big]=\frac{\mathrm{Cov}[N(\lambda),N(\mu)]}{\sqrt{\mathrm{Var}[N(\lambda)]\mathrm{Var}[N(\mu)]}}.\]
Using Corollary \ref{cor: correlation scaling} and the informal relation \eqref{equation: trace count relation} we arrive at the following:

\begin{conjecture}
\label{conj: correlation}
Let $d=1,2,3$.
As $(\lambda,\mu)\to\infty$,
the correlation $\chi(\lambda,\mu)$
is asymptotic to an explicit function of $\min\{\lambda,\mu\}$, $\max\{\lambda,\mu\}$, $d$, and $\kappa$.
\end{conjecture}

Again unlike the exponential trace, this conjecture
makes sense for $d=3$, which leads us to ask the following:
\begin{problem}
Following-up on Problem \ref{prob: 1},
once $N(\lambda)$ is constructed, prove (or disprove)
Conjecture \ref{conj: correlation}.
\end{problem}

\section{Definition of $\mathrm{Tr}[e^{-t H}]$} \label{Section: Trace Set Up}

We now discuss how to define the exponential trace
$\mathrm{Tr}[e^{-t H}]$. This construction is rather
different depending on whether $d=1$ or $d=2$; hence
we split the definition into two cases.

\subsection{One-Dimensional Case}

In \cite{GLFK}, the one-dimensional version
of the operator $H$ was constructed under the
assumptions of this paper using its quadratic
form; i.e.,
\begin{align}
\label{eqn: 1d form}
\mathcal Q(\phi,\psi)=\tfrac12\langle\phi',\psi'\rangle+\langle \phi\psi,V\rangle+\langle \phi\psi,\xi\rangle
\end{align}
for every $\phi,\psi:\mathbb R\to\mathbb R$ in an appropriate
weighted Sobolev space, where $\langle\phi\psi,\xi\rangle$
is interpreted as a pathwise stochastic integral
(see \cite[Definitions 2.5 and 2.8, and Propositions 2.9, all in Case 1]{GLFK}).
Then, it was shown in
\cite[Proposition 2.10]{GLFK} that, almost surely, there exists a unique self-adjoint
operator on $L^2(\mathbb R)$ whose quadratic form
is \eqref{eqn: 1d form}, and that this operator has compact
resolvent. We define $H$ as this operator. In particular, $H$'s spectrum is a sequence of
eigenvalues $\lambda_1\leq\lambda_2\leq\cdots$ without accumulation point, and the corresponding
orthonormal eigenfunctions $\phi_1,\phi_2,\ldots$ span $L^2(\mathbb R)$. One can then define $H$'s
semigroup and its trace as follows:

\begin{definition}
Let $d=1$.
For every $t>0$, we define
\[e^{-tH}\psi(x)=\sum_{k=1}^\infty e^{-t\lambda_k}\langle\psi,\phi_k\rangle\phi_k(x),
\qquad\psi\in L^2(\mathbb R),\]
and $\mathrm{Tr}[e^{-tH}]=\sum_{k=1}^\infty e^{-t\lambda_k}$.
\end{definition}

It was also shown in \cite[Theorem 2.24]{GLFK}
that $e^{-tH}$ is almost surely trace class for all $t>0$, so that
$\mathrm{Tr}[e^{-tH}]$ is well defined.

\subsection{Two-Dimensional Case}

When $d=2$, the operator $H$ must be renormalized.
The first step in the construction is to regularize
the noise:

\begin{definition}
Throughout the paper, we denote the $d$-dimensional
Gaussian kernel as
\begin{align}
\label{eqn: Gaussian kernel}
p_t(x)=\frac{e^{-|x|^2/2t}}{(2\pi t)^{d/2}},\qquad t>0,~ x\in\mathbb R^d.
\end{align}
Then, for every $\epsilon>0$, we define
$\xi_\epsilon=\xi*p_{\epsilon/2}$,
where $*$ denotes the convolution. Since the Gaussian
kernel is smooth, so is $\xi_\epsilon$. Moreover,
given that $p_s*p_t=p_{s+t}$ and $p_t(x)=p_t(-x)$ is even,
for every $\epsilon_1,\epsilon_2>0$, we have the
covariance identity
\begin{align}
\label{eqn: mixed epsilon covariance}
\mathbb E[\xi_{\epsilon_1}(x)\xi_{\epsilon_2}(y)]=p_{(\epsilon_1+\epsilon_2)/2}(y-x),
\qquad x,y\in\mathbb R^d.
\end{align}
In particular, $\xi_\epsilon$ is a centered Gaussian process with covariance kernel $p_\epsilon$.
\end{definition}

With this in hand, we may now define the
smoothed approximations
\[H_\epsilon=-\tfrac12\Delta+V+\xi_\epsilon,\qquad\epsilon>0\]
and their semigroups
using classical theory
thanks to the following (see Section \ref{section: smooth trace proofs} for proof):

\begin{lemma}
\label{Lemma: V+xi epsilon}
Let $d=2$ and $\epsilon>0$ be fixed.
\begin{enumerate}
\item Let $\kappa>0$ be as in \eqref{Equation: V Lower Bound}. There exist finite random variables $\mathfrak a_\epsilon,\mathfrak b_\epsilon>0$ such that, almost surely, one has
\[V(x)+\xi_\epsilon(x)\geq\mathfrak a_\epsilon|x|^\kappa-\mathfrak b_\epsilon,\qquad x\in\mathbb R^2.\]
\item Almost surely, $(V+\xi_\epsilon)_+\in K_2^{\mathrm{loc}}$ (recall that $K_2^{\mathrm{loc}}$ denotes the local Kato class in $\mathbb R^2$).
\end{enumerate}
\end{lemma}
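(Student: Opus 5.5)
The key input is a growth bound on the smooth Gaussian field $\xi_\epsilon$. I plan to show that, almost surely,
\[
\sup_{x\in\mathbb R^2}\frac{|\xi_\epsilon(x)|}{\sqrt{\log(2+|x|)}}<\infty .
\]
Both parts of the lemma then follow easily from \eqref{Equation: V Lower Bound}.

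\textbf{Step 1: the growth bound.} Cover $\mathbb R^2$ by unit squares $Q_z=z+[0,1]^2$, $z\in\mathbb Z^2$. Since $\xi_\epsilon$ is stationary with smooth covariance $p_\epsilon$, the law of $\sup_{Q_z}|\xi_\epsilon|$ does not depend on $z$.

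By Kolmogorov/Dudley (the field is a.s. smooth, with $\mathbb E|\xi_\epsilon(x)-\xi_\epsilon(y)|^2\le C_\epsilon|x-y|^2$), we have $M:=\mathbb E\sup_{Q_0}|\xi_\epsilon|<\infty$. The Borell--TIS inequality then gives
\[
\mathbb P\Big(\sup_{Q_z}|\xi_\epsilon|>M+u\Big)\le 2e^{-u^2/2\sigma^2},\qquad \sigma^2=p_\epsilon(0).
\]
Take $u=\sqrt{8\sigma^2\log(2+|z|)}$. The resulting probabilities are summable over $z\in\mathbb Z^2$, since they are of order $(2+|z|)^{-4}$. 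By Borel--Cantelli, only finitely many squares violate the bound. This yields a finite random $K_\epsilon$ with
\[
|\xi_\epsilon(x)|\le K_\epsilon\sqrt{\log(2+|x|)}\qquad\text{for all }x.
\]
This step is the only genuinely probabilistic one and the main (mild) obstacle. If one prefers to avoid Borell--TIS, one can instead bound $\sup_{Q_z}|\xi_\epsilon|$ by the values $|\xi_\epsilon(z)|$ plus the supremum of $|\nabla\xi_\epsilon|$ over $Q_z$. The gradient is again a stationary Gaussian field, so the same Gaussian-tail union bound over lattice points applies.

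\textbf{Step 2: part 1.} Combining Step 1 with \eqref{Equation: V Lower Bound} gives
\[
V(x)+\xi_\epsilon(x)\ge a|x|^\kappa-b-K_\epsilon\sqrt{\log(2+|x|)}.
\]
Since $\sqrt{\log(2+r)}=o(r^\kappa)$, there is a finite random $R_\epsilon$ such that $K_\epsilon\sqrt{\log(2+|x|)}\le \tfrac a2|x|^\kappa$ for $|x|\ge R_\epsilon$. Choose
\[
\mathfrak a_\epsilon=a/2,\qquad \mathfrak b_\epsilon=b+K_\epsilon\sqrt{\log(2+R_\epsilon)}.
\]
For $|x|\ge R_\epsilon$ the inequality holds by the choice of $R_\epsilon$. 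For $|x|<R_\epsilon$ it holds because $\tfrac a2|x|^\kappa\le a|x|^\kappa$ and the logarithmic term is at most $K_\epsilon\sqrt{\log(2+R_\epsilon)}$. Both $\mathfrak a_\epsilon$ and $\mathfrak b_\epsilon$ are positive and finite a.s.

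\textbf{Step 3: part 2.} Since $x\mapsto\max\{0,x\}$ is subadditive and monotone,
\[
0\le (V+\xi_\epsilon)_+\le V_++|\xi_\epsilon|.
\]
By assumption $V_+\in K_2^{\mathrm{loc}}$. Also $|\xi_\epsilon|$ is continuous, hence locally bounded, and locally bounded functions lie in $K_2^{\mathrm{loc}}$. The local Kato class is a vector space and is closed under domination by a nonnegative member, because the Kato condition is monotone in $|f|$. Hence $(V+\xi_\epsilon)_+\in K_2^{\mathrm{loc}}$ almost surely.
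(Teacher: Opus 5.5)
Your proof is correct and follows the paper's argument: a logarithmic growth bound on $\xi_\epsilon$ from Gaussian suprema estimates plus Borel--Cantelli gives part 1, and $(V+\xi_\epsilon)_+\le V_++|\xi_\epsilon|$ together with local boundedness of $\xi_\epsilon$ and the vector-space/monotonicity properties of $K_2^{\mathrm{loc}}$ gives part 2; you simply spell out the Borell--TIS step that the paper only cites. One small quibble: your optional route avoiding Borell--TIS is circular as written, since a union bound over lattice points does not control $\sup_{Q_z}|\nabla\xi_\epsilon|$, but your main argument does not rely on it.
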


Indeed, it is well-known that under these conditions,
the operator $H_\epsilon$ can be constructed through
its quadratic form, and that its semigroup is
trace class (see, e.g., \cite[Theorem A.2.7]{Simon}
for the construction of $H_\epsilon$ and $e^{-tH_\epsilon}$,
and \cite[(9.4)]{SimonBook} for the trace class condition).

At this point, one could attempt to construct
$H$ by taking the $\epsilon\to0$ limit of an appropriate renormalization of $H_\epsilon$.
More specifically,
one expects that the paracontrolled calculus/regularity
structures technology developed in \cite{AllezChouk,Labbe} to construct $-\tfrac12\Delta+\xi$ on a box could be suitably adapted
to the present setting. However,
given that we ultimately only care about $\mathrm{Tr}[e^{-tH}]$, we elect to circumvent these difficulties
entirely and construct the latter as a renormalized $L^2$ limit of the random variables $\mathrm{Tr}[e^{-t H_\epsilon}]$. Indeed, this makes it possible to construct
the trace using only probabilistic arguments, i.e., the Feynman-Kac formula. Our statement to that effect is as follows (see Section \ref{section: proof of existence of 2d trace} for proof):

\begin{prop}
\label{Proposition: Existence of 2d Trace}
Let $d=2$, and define
$c_\epsilon=\frac1{2\pi}\log(1/\epsilon)$.
There exists $\vartheta>0$ such that for
every $t\in(0,\vartheta)$, there exists a random variable $T(t)$ such that
\[\lim_{\epsilon\to0}\mathbb E\Big[\big|\mathrm{Tr}[e^{-t(H_\epsilon+c_\epsilon)}]-T(t)\big|^2\Big]=0.\]
\end{prop}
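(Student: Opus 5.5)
We will show that $\big(\mathrm{Tr}[e^{-t(H_\epsilon+c_\epsilon)}]\big)_{\epsilon>0}$ is Cauchy in $L^2$ as $\epsilon\to0$. Since $L^2$ is complete, the limit $T(t)$ then exists. The main tool is the Feynman--Kac formula for the trace. Let $B^{x,x}_t$ be a Brownian bridge from $x$ to $x$ on $[0,t]$, with expectation $\mathbb E^{x}$ taken over the bridge only. Then
\[
\mathrm{Tr}[e^{-t(H_\epsilon+c_\epsilon)}]=\int_{\mathbb R^2}p_t(0)\,\mathbb E^{x}\Big[e^{-\int_0^t V(B_u)\,du-\int_0^t\xi_\epsilon(B_u)\,du-tc_\epsilon}\Big]dx ,
\]
which is justified for fixed $\epsilon$ by Lemma \ref{Lemma: V+xi epsilon}.

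It suffices to show that $\mathbb E\big[\mathrm{Tr}_{\epsilon_1}\mathrm{Tr}_{\epsilon_2}\big]$ converges to a finite limit $L$ as $(\epsilon_1,\epsilon_2)\to0$ jointly. Indeed, $\mathbb E|\mathrm{Tr}_{\epsilon_1}-\mathrm{Tr}_{\epsilon_2}|^2$ is the sum of the three mixed moments with $\epsilon_i=\epsilon_j$ or $\epsilon_i\ne\epsilon_j$, so it tends to $L-2L+L=0$. To compute these moments, take two independent bridges $B^1$ from $x$ and $B^2$ from $y$. Given the bridges, $\int_0^t\xi_{\epsilon_i}(B^i_u)\,du$ is a centered Gaussian, and by \eqref{eqn: mixed epsilon covariance} we can average over $\xi$ explicitly:
\[
\mathbb E\big[\mathrm{Tr}_{\epsilon_1}\mathrm{Tr}_{\epsilon_2}\big]=p_t(0)^2\!\iint\!\mathbb E^{x,y}\Big[e^{-\sum_i\int V(B^i)}\,e^{\frac12\sum_i\left(\iint_{[0,t]^2}p_{\epsilon_i}(B^i_u-B^i_v)\,du\,dv-2tc_{\epsilon_i}\right)}\,e^{\iint p_{(\epsilon_1+\epsilon_2)/2}(B^1_u-B^2_v)\,du\,dv}\Big]dx\,dy .
\]

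The core step is the renormalized self-intersection local time
\[
\gamma_\epsilon(B)=\tfrac12\iint_{[0,t]^2}p_\epsilon(B_u-B_v)\,du\,dv-tc_\epsilon .
\]
First, $\mathbb E^x\big[\tfrac12\iint p_\epsilon(B_u-B_v)\,du\,dv\big]=tc_\epsilon+O(1)$ uniformly in $x$, because the $\log(1/\epsilon)$ divergence comes from the diagonal $|u-v|\lesssim\epsilon$. This is exactly what $c_\epsilon=\frac{1}{2\pi}\log(1/\epsilon)$ cancels. Second, we invoke Varadhan's classical renormalization for planar Brownian motion, adapted to bridges: $\gamma_\epsilon$ converges in $L^2$, and in probability, to a limit $\gamma$. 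Third, we need the exponential integrability
\[
\sup_{\epsilon}\mathbb E^x\big[e^{p\,\gamma_\epsilon(B)}\big]<\infty
\]
for some $p>1$, whenever $t<\vartheta$. This is Le Gall's result that $e^{\lambda\gamma}$ is integrable for $\lambda$ below a critical threshold. By Brownian scaling, $\gamma$ at time $t$ behaves like $t\gamma^{(1)}$ plus a deterministic term, which is where the restriction to small $t$ enters. The same estimate must hold uniformly in the starting point $x$; bridge and Brownian-motion estimates agree up to absolute continuity on $[0,t/2]$ together with a time-reversal argument.

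The cross term $\iint p_{(\epsilon_1+\epsilon_2)/2}(B^1_u-B^2_v)\,du\,dv$ approximates the mutual intersection local time of two independent planar paths. This quantity needs no renormalization: it converges in $L^2$ and has finite exponential moments of every order for small $t$, since its moments can be bounded by iterated Green function integrals with only logarithmic singularities. We then apply Hölder's inequality to split the three exponential factors. This gives uniform integrability of the integrand in $\epsilon_1,\epsilon_2$, so the integrand converges in $L^1$ for each fixed pair $(x,y)$.

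To integrate over $x,y$, we use dominated convergence in $(x,y)$. A domination follows from \eqref{Equation: V Lower Bound}: the factor $\mathbb E^x\big[e^{-p'\int_0^tV(B)}\big]^{1/p'}\lesssim e^{-c|x|^\kappa t}$ after Hölder, since the bridge stays near $x$ with high probability. The resulting bound is integrable over $\mathbb R^2\times\mathbb R^2$. The main obstacle is the uniform exponential moment bound for the renormalized self-intersection local time, uniformly in $\epsilon$ and $x$, which determines $\vartheta$. We would establish it by reducing to Le Gall's Brownian-motion result via the bridge absolute continuity described above.
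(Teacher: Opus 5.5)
Your proposal is correct and follows essentially the same route as the paper: the Cauchy criterion via the joint limit of the mixed second moments, Gaussian averaging in the Feynman--Kac formula, convergence in probability of the renormalized SILT and the MILT, uniform integrability from H\"older plus small-$t$ exponential moment bounds, and dominated convergence in $(x,y)$. The paper simply cites \cite{GLP} for the uniform-in-$\epsilon$ exponential bounds; you should do the same or run Le Gall's argument on the mollified approximations, since his theorem about the limit $\gamma$ alone does not control $\sup_\epsilon$ (Fatou goes the wrong way). One small repair: for $\kappa>2$ your bound $\mathbb E^x[e^{-p'\int_0^tV(B)}]^{1/p'}\lesssim e^{-c|x|^\kappa t}$ is false, because for fixed $t$ and large $|x|$ the bridge can run toward the origin and back at a cost of order $e^{-C|x|^{1+\kappa/2}}\gg e^{-ct|x|^\kappa}$. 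Dominate instead via $|x+w|^\kappa\geq|x|^{\kappa\wedge1}-|w|^{\kappa\wedge1}-1$ as the paper does, which still gives an integrable bound.
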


We now end this section with the following definition:

\begin{definition}
Let $\vartheta>0$ and $T(t)$ (for $t<\vartheta$)
be as in Proposition \ref{Proposition: Existence of 2d Trace}.
If $d=2$, then for every $t\in(0,\vartheta)$, we define $\mathrm{Tr}[e^{-tH}]=T(t)$.
\end{definition}

\section{Feynman-Kac Formulas and Local Times}

We now introduce and prove the Feynman-Kac formulas for the expectation and covariance of the exponential trace as well as the necessary background on the mutual and self-intersection local times of Brownian Bridges. These Feynman-Kac representations will be the key ingredient that allows us to get the precise asymptotics on the covariance of the semigroups. As discussed in Section \ref{Section: Trace Set Up}, there are some significant differences between the $d=1$ and $d=2$ cases owing to the singularity of the white noise in the latter case. As such this section is organized as follows: in Section \ref{section: LT and FK} we establish some necessary local time preliminaries and state the Feynman-Kac formulas for the expectations and covariances for both $d\in\{1,2\}$. In Section \ref{section: FK d=1 proof}, we prove the expectation and covariance formulas for $d=1$ and in Section \ref{section: smooth trace proofs} we prove the existence of a Feynman-Kac formula for the smoothed exponential trace when $d=2$. In Section \ref{section: LT prelims}, we collect largely standard results on local times that we use for proving the expectation and covariance formulas when $d=2$ and we conclude in Sections \ref{section: proof of existence of 2d trace} and \ref{section: d=2 expectation and covariance proofs} by proving the convergence of the smoothed exponential trace and the final expectation and covariance formulas. 

\begin{definition}
\label{def: BB Couplings}
We take $B_1^{0,0}$ to mean a standard Brownian Bridge in $\R^d$ for $d\in\{1,2\}$ that satisfies $B^{0,0}_1(0)=B_1^{0,0}(1)=0$. For $x\in\R^d$ and $t>0$, we define $B_t^{x,x}(u)$ by the coupling \[B^{x,x}_t(u) = x+\sqrt{t}B_1^{0,0}(u/t)\] In words, this is a Brownian Bridge from $x$ to $x$ in time $t$ with one specific choice of coupling over all $x,t$. Similarly, we use $\overline{B}^{y,y}_s$ to denote an independent Brownian Bridge,
with the same coupling over $y\in\mathbb R^d$ and $s>0$.
\end{definition}

\subsection{Definitions and Formulas}\label{section: LT and FK}
For $d=1$, $0\leq s\leq t$ and $a\in\R$, let $L^a_{[s,t]}(B_t^{x,x})$ denote the continuous version of the local time of $B_t^{x,x}$ at the point $a$ collected in the time interval $[s,t]$. For any measurable function $f:\R\to\R$ satisfying appropriate integrability conditions, the local time gives the representation,
\begin{align*}
\int_s^tf(B_t^{x,x}(u))du = \int_\R L_{[s,t]}^a(B_t^{x,x})f(a)da = \inn{L_{[s,t]}(B_t^{x,x}),f}.
\end{align*}

The self-intersection local time (SILT) of a Brownian Bridge with itself and the mutual intersection local time (MILT) of two independent Brownian Bridges $B_t^{x,x}$ and $\overline{B}_s^{y,y}$ will be essential quantities for the remainder of this paper. We can intuitively think of the self-intersection local time as a function that counts how often the bridge $B_t^{x,x}$ intersects itself, whereas the mutual intersection local time counts how many times the bridges $B_t^{x,x}$ and $\overline{B}_s^{y,y}$ intersect each other. When $d=1$, we can rigorously define these two quantities in terms of the local time of Brownian Bridges as follows:

\begin{definition}\label{def: MILT and SILT d=1}
For $d=1$ and $t>0$ the self-intersection local time of a Brownian Bridge $B_t^{x,x}$ is defined as 
\begin{align}
    \beta_t(B_t^{x,x}) = \int_{-\infty}^\infty (L_t^a(B_t^{x,x}))^2da = \inn{L_t(B_t^{x,x}),L_t(B_t^{x,x})} = \norm{L_t(B_t^{x,x})}_2^2
\end{align}
and for $s,t>0$ the mutual intersection local time of the (independent) Brownian Bridges $B_t^{x,x}$ and $\overline{B}_s^{y,y}$ is defined as
\begin{align}
    \alpha_{t,s}(B_t^{x,x},\overline{B}_s^{y,y}) = \int_{-\infty}^\infty  L_t^a(B^{x,x}_t)L_s^a(\overline{B}^{y,y}_s)da = \inn{L_t^a(B_t^{x,x}),L_s^a(\overline{B}_s^{y,y})}
\end{align}
respectively.
\end{definition}

When $d=2$, we are no longer able to define the MILT and the SILT using the local time and so instead we introduce the following approximations in order to rigorously define the two quantities.  

\begin{definition}\label{def: approx MILT and SILT 2D}
For $d\in\{1,2\}$, $\epsilon>0$, and $A\subset[0,t]^2$ a bounded Borel subset, the approximate self-intersection local time of a Brownian Bridge, $B_t^{x,x}$, is defined as 
\begin{align}
\label{eqn: approx silt}
    \beta_A^\epsilon(B^{x,x}_t) =  \int_Ap_\epsilon(B_t^{x,x}(u)-B_t^{x,x}(v))dudv
\end{align}
(note that $\beta_A^\epsilon(B^{x,x}_t)= \beta_A^\epsilon(B^{0,0}_t)$).
For $A\subset[0,t]\times[0,s]$, the approximate mutual intersection local time of the Brownian Bridges $B_t^{x,x}$ and $\overline{B}_s^{y,y}$ is defined as
\begin{align}\label{eqn: approx milt}
    \alpha^\epsilon_A(B_t^{x,x},\overline{B}_s^{y,y}) = \int_Ap_\epsilon(B_t^{x,x}(u)-\overline{B}_s^{y,y}(v))dudv
\end{align}
\end{definition}
Using these we can now define the MILT and SILT in two dimensions by the following propositions that tell us that these approximations have well defined limits. Beginning with the mutual intersection time we have:

\begin{prop}\label{Prop: MILT convergence}
Let $d=2$, $s,t>0$, and $x,y\in\R^d$ be fixed. Then for any Borel measurable $A_1\times A_2\subset[0,t]\times[0,s]$ there exists a random variable, $\alpha_{A_1\times A_2}$, such that 
\begin{align*}
    \alpha_{A_1\times A_2}(B_t^{x,x},\overline{B}_s^{y,y}) = \lim_{\epsilon\to0} \alpha^\epsilon_{A_1\times A_2}(B_t^{x,x},\overline{B}_s^{y,y}) \qquad \text{in probability}
\end{align*}
\end{prop}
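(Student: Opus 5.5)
We will show that $\{\alpha^\epsilon_{A_1\times A_2}\}_{\epsilon>0}$ is Cauchy in $L^2(\P)$. Convergence in $L^2$ implies convergence in probability, so this is enough. Write $X=B_t^{x,x}$ and $Y=\overline{B}_s^{y,y}$; these are independent Gaussian processes. Expanding the square,
\[
\E\big[\alpha^{\epsilon_1}_{A}\alpha^{\epsilon_2}_{A}\big]=\int_{A}\int_{A}\E\big[p_{\epsilon_1}(X(u_1)-Y(v_1))\,p_{\epsilon_2}(X(u_2)-Y(v_2))\big]\,du_1dv_1du_2dv_2 .
\]
We use the Fourier representation $p_\epsilon(z)=(2\pi)^{-2}\int_{\R^2}e^{i\langle\lambda,z\rangle}e^{-\epsilon|\lambda|^2/2}d\lambda$. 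Because $X$ and $Y$ are independent and Gaussian, the integrand becomes
\[
(2\pi)^{-4}\int_{\R^4}e^{i\langle\lambda_1+\lambda_2,\,x-y\rangle}e^{-\epsilon_1|\lambda_1|^2/2-\epsilon_2|\lambda_2|^2/2}\,e^{-\frac12\mathrm{Var}(\langle\lambda_1,X(u_1)\rangle+\langle\lambda_2,X(u_2)\rangle)-\frac12\mathrm{Var}(\langle\lambda_1,Y(v_1)\rangle+\langle\lambda_2,Y(v_2)\rangle)}\,d\lambda_1d\lambda_2 .
\]
Every Fourier factor has modulus at most one. So once we have an $\epsilon$-free integrable dominating function $F(u_1,v_1,u_2,v_2,\lambda_1,\lambda_2)$, dominated convergence gives a finite limit $\Lambda$ of the mixed moment as $(\epsilon_1,\epsilon_2)\to0$, independently of how $\epsilon_1,\epsilon_2\to 0$. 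Then
\[
\E\big|\alpha^{\epsilon_1}_A-\alpha^{\epsilon_2}_A\big|^2=\E\big[(\alpha^{\epsilon_1}_A)^2\big]-2\E\big[\alpha^{\epsilon_1}_A\alpha^{\epsilon_2}_A\big]+\E\big[(\alpha^{\epsilon_2}_A)^2\big]\to\Lambda-2\Lambda+\Lambda=0 .
\]
Sets $A_1\times A_2$ that are not full intervals need no separate treatment: they enter only through the indicator $1_A$, which is bounded by $1_{[0,t]\times[0,s]}$. Hence it suffices to build $F$ on the full rectangle.

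The core of the argument is this domination, and it is the main obstacle. The Brownian bridge covariance is $\mathrm{Cov}(X(u),X(u'))=(u\wedge u')(1-(u\vee u')/t)I_2$. This is degenerate near $u=0$ and $u=t$, so we first restrict away from the endpoints. Take $u_i\in[\delta,t-\delta]$ and $v_i\in[\delta,s-\delta]$. There the bridge is mutually absolutely continuous with Brownian motion, with density ratio bounded on that interval, and the standard local nondeterminism of Brownian motion gives, for $u_1\le u_2$,
\[
\mathrm{Var}\big(\langle\lambda_1,X(u_1)\rangle+\langle\lambda_2,X(u_2)\rangle\big)\ge c_\delta\big(|\lambda_1+\lambda_2|^2+|\lambda_2|^2|u_2-u_1|\big),
\]
and similarly for $Y$. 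Substituting $\mu=\lambda_1+\lambda_2$ and $\lambda=\lambda_2$, the $\lambda$-integral contributes $\int e^{-c|\lambda|^2(|u_2-u_1|+|v_2-v_1|)}d\lambda\lesssim(|u_2-u_1|+|v_2-v_1|)^{-1}$, and the $\mu$-integral is bounded. In the four time variables this singularity is integrable, since $(|a|+|b|)^{-1}$ is integrable near $0$ in $\R^2$. This yields $F$ on the restricted region.

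The endpoint strips $[0,\delta]\cup[t-\delta,t]$ have to be handled separately, and we would treat them by a uniform-smallness argument. First, time reversal and symmetry reduce the strips to $[0,\delta]$. Near time $0$ the bridge stays close to $x$, and we have the Gaussian bound $\E[p_\epsilon(X(u_1)-Y(v_1))\cdots]\le$ the corresponding Brownian expression, up to constants. Using this, we show that the contribution of $\{u_1\in[0,\delta]\}$ to $\E[(\alpha^\epsilon)^2]$ is $o(1)$ as $\delta\to0$, uniformly in $\epsilon$. The computation is the same Fourier bound, but conditioned on the interior point values, which are nondegenerate. Combining this uniform tail estimate with the $L^2$-Cauchy property on the restricted rectangles through a standard $\epsilon/3$ argument gives the Cauchy property on all of $A$.

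The dimension enters only through the $\lambda$-integral $\int_{\R^2}e^{-c|\lambda|^2 r}d\lambda\asymp r^{-1}$, which is exactly integrable in the two time-separation variables. This is why the construction works in $d=2$ without renormalization, in contrast with the SILT.
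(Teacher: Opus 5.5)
Your overall scheme is sound. The mixed moment $\E[\alpha^{\epsilon_1}_A\alpha^{\epsilon_2}_A]$ converges as $(\epsilon_1,\epsilon_2)\to0$, which gives $L^2$-Cauchy and hence convergence in probability. The interior local-nondeterminism bound is also fine, up to a small slip: if $v_1,v_2$ are ordered oppositely to $u_1,u_2$, the $Y$-bound involves $|\lambda_1|^2|v_1-v_2|=|\mu-\lambda|^2|v_1-v_2|$. This must be absorbed into the $|\mu|^2$ term before integrating.

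The gap is the endpoint step. You correctly identify it as the crux, but you only assert it, and the mechanism you indicate does not work as described. Comparison with Brownian motion gains nothing at $u=0$, where Brownian motion is just as degenerate. Nondegeneracy of interior values of the \emph{same} bridge cannot compensate either: for $u_1\to0$ with $u_2$ interior,
\[\mathrm{Var}\big(\langle\lambda_1,X(u_1)\rangle+\langle\lambda_2,X(u_2)\rangle\big)=\big|\lambda_2+\tfrac{u_1}{u_2}\lambda_1\big|^2\tfrac{u_2(t-u_2)}{t}+|\lambda_1|^2\tfrac{u_1(u_2-u_1)}{u_2},\]
so the $X$-part alone gives a $\lambda$-integral of order $u_1^{-1}$, which is not integrable in $u_1$. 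The missing decay has to come from the other bridge. In the corner $u_1,v_1\to0$ (with $u_2,v_2$ interior), the $\lambda$-integral of your dominating function is of order $(u_1+v_1)^{-1}$. Your bound discards the phase $e^{i\langle\lambda_1+\lambda_2,x-y\rangle}$, so the fact that ``the bridge stays close to $x$'' buys nothing here. The integrability of $(u_1+v_1)^{-1}$ genuinely uses both $u_1$ and $v_1$. Nothing in your sketch (``conditioning on interior point values'') produces such bounds uniformly in $\epsilon$ over all boundary configurations.

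The clean fix is to integrate out $\lambda$ exactly instead of using local nondeterminism. The dominating function then becomes $(2\pi)^{-2}\det(K_0)^{-1}$, where $K_0=K_X+K_Y$ is the $2\times 2$ matrix in \eqref{eqn. Bridge difference covariance}. By Minkowski's determinant inequality and AM--GM, $\det K_0\geq\det K_X+\det K_Y\geq 2(\det K_X\det K_Y)^{1/2}$. Since $\det K_X=u_1(u_2-u_1)(t-u_2)/t$ for $u_1<u_2$, one has $\int_{[0,t]^2}(\det K_X)^{-1/2}du<\infty$ (a Dirichlet integral with all exponents $-1/2>-1$), and likewise for $Y$. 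This gives an $\epsilon$-free integrable bound on all of $[0,t]^2\times[0,s]^2$, uniform in $x,y$. The boundary degeneracy is therefore harmless, and no interior/boundary split or $\epsilon/3$ argument is needed.

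This is exactly the domination the paper uses to prove Proposition \ref{Prop: Product Expectation is Gaussian pdf}, with $\epsilon I_{2d}$ replaced by $\mathrm{diag}(\epsilon_1,\epsilon_2)\otimes I_2$, which still can only increase the determinant. For the present proposition itself, the paper simply cites \cite[Theorem 5.11]{GLP}.
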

\begin{proof}
    \cite[Theorem 5.11]{GLP}.
\end{proof}

For the self-intersection time we use the following notation: Given any $A\subset\R^2$ let \[A_{\leq}=\{(x,y)\in A:x\leq y\}.\]

Using this we now get the convergence of the approximate SILT to a well defined limit.
\begin{prop}\label{prop: approx SILT}
Let $d=2$, $x\in\R^d$ and $t>0$ be fixed.

For  $0\leq a<b\leq t$, there exist a random variable $\gamma_{[a,b]^2_{\leq}}(B_t^{x,x})$ such that
\[\gamma_{[a,b]^2_{\leq}}(B_t^{x,x})=\lim_{\epsilon\to0}(\beta^\epsilon_{[a,b]^2_\leq}(B_t^{x,x})-\E[\beta^\epsilon_{[a,b]^2_{\leq}}(B_t^{x,x})]) \qquad\text{in probability}\]
   
\end{prop}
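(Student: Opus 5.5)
We will show that the centered approximations form a Cauchy sequence in $L^2(\P)$. Convergence in $L^2$ then gives convergence in probability.

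\textbf{Reduction.} First use translation invariance, $\beta^\epsilon_A(B^{x,x}_t)=\beta^\epsilon_A(B^{0,0}_t)$, to set $x=0$. Next use the scaling coupling $B^{0,0}_t(u)=\sqrt t B^{0,0}_1(u/t)$ together with $p_\epsilon(\sqrt t z)=t^{-1}p_{\epsilon/t}(z)$ in $d=2$. The change of variables $u\mapsto u/t$ then turns $\beta^\epsilon_{[a,b]^2_\le}(B^{0,0}_t)$ into $t\,\beta^{\epsilon/t}_{[a/t,b/t]^2_\le}(B^{0,0}_1)$. It therefore suffices to treat $t=1$ and $0\le a<b\le 1$.

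\textbf{Fourier representation.} Write $p_\epsilon(z)=(2\pi)^{-2}\int e^{i\lambda\cdot z}e^{-\epsilon|\lambda|^2/2}d\lambda$. For the centered variables $X_\epsilon=\beta^\epsilon-\E\beta^\epsilon$ this gives
\[
\E[X_{\epsilon}X_{\delta}]=(2\pi)^{-4}\iint e^{-\epsilon|\lambda|^2/2-\delta|\mu|^2/2}\iint_{([a,b]^2_\le)^2}\mathrm{Cov}\Big(e^{i\lambda\cdot(B(v)-B(u))},e^{i\mu\cdot(B(v')-B(u'))}\Big)\,du\,dv\,du'\,dv'\,d\lambda\,d\mu .
\]
Hence $\E|X_\epsilon-X_\delta|^2=\E X_\epsilon^2-2\E X_\epsilon X_\delta+\E X_\delta^2$. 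The Cauchy property follows by dominated convergence once we show
\[
\iint d\lambda\, d\mu\iint \big|\mathrm{Cov}(\cdots)\big|<\infty ,
\]
since then each of the three terms converges to the same limit. This is exactly the classical Varadhan renormalization argument for planar Brownian self-intersection local time, adapted to the bridge.

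\textbf{Covariance estimate.} Split the domain according to the relative position of the time intervals $I=[u,v]$ and $J=[u',v']$.
\begin{itemize}
\item \emph{Disjoint intervals.} For Brownian motion the two increments would be independent and the covariance would vanish. For the bridge we write $B(r)=W(r)-rW(1)$, which produces a weak correlation term.
\item \emph{Nested and overlapping intervals.} Here we compute the Gaussian characteristic functions explicitly. The exponent is a quadratic form in $(\lambda,\mu)$ determined by the lengths of $I\setminus J$, $I\cap J$, $J\setminus I$, with bridge corrections of order $|I|\,|J|$.
\end{itemize}
For the Brownian motion analogue the covariance is bounded by
\[
\exp\big(-\tfrac12(|\lambda|^2 r_1+|\lambda+\mu|^2 r_2+|\mu|^2 r_3)\big)\big(1-e^{-\langle\lambda,\mu\rangle r_2}\big).
\]
Integrating over $\lambda,\mu\in\R^2$ gives a bound of order $(r_1r_2+r_2r_3+r_1r_3)^{-1}$ times a correction factor. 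That correction factor makes the remaining time integral finite in $d=2$, which is the standard computation showing the renormalized SILT exists in $L^2$.

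\textbf{Main obstacle.} The main difficulty is controlling the bridge corrections uniformly, especially near the endpoints $u\approx 0$ and $v\approx 1$ when $b=1$, where the bridge covariance $\min(r,r')-rr'$ degenerates. To handle this, we will first try an absolute-continuity argument. On $[0,1-\eta]$ the bridge law is absolutely continuous with respect to Brownian motion with an $L^p$ density; time reversal handles $[\eta,1]$. We would then split $[a,b]^2_\le$ into squares near the diagonal of small side, where this comparison applies and the Brownian motion result is available. The remaining off-diagonal region, where $|u-v|\ge\eta$, is harmless: there $\beta^\epsilon$ converges in $L^2$ without any renormalization, because $p_\epsilon(B(v)-B(u))$ converges to a density of $B(v)-B(u)$ that is bounded for $|u-v|\ge\eta$.

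Two cross terms remain: those between near-diagonal squares that are adjacent, and those between distant pieces. Both are mutual-intersection-type quantities and can be handled as in Proposition~\ref{Prop: MILT convergence} (\cite[Theorem 5.11]{GLP}). Adjacent squares share a corner, so their intersection local time needs the usual logarithmic check at the shared endpoint. If the absolute-continuity route proves awkward in $L^2$, we will fall back on the direct Gaussian computation described in the covariance estimate, bounding the bridge quadratic form below by a constant multiple of the Brownian one on $[a,b]\subset[0,1]$.
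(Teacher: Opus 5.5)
The paper gives no argument of its own here; it simply cites \cite[Theorem 3.7]{Matsuda}. Your Varadhan-type $L^2$-Cauchy scheme, together with the reduction to $x=0$, $t=1$ and the Brownian-motion covariance computation, is a legitimate independent route. As written, however, it misses the one singularity specific to the bridge, and the steps you rely on fail exactly there.

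Take $[a,b]=[0,1]$ after scaling, i.e.\ $[a,b]=[0,t]$; this is the case $\gamma_t$ actually used in the paper. Each coordinate of $B_1^{0,0}(v)-B_1^{0,0}(u)$ has variance $(v-u)(1-(v-u))$, which vanishes as $(u,v)\to(0,1)$: the bridge is a loop, so times near $0$ and near $1$ are close. This breaks three of your steps.
(i) Your claim that the density of $B(v)-B(u)$ is bounded on $\{v-u\geq\eta\}$ is false; its value at the origin is $(2\pi(v-u)(1-v+u))^{-1}$. So the corner block $[0,\eta]\times[1-\eta,1]$ carries a second, diagonal-type singularity. Absolute continuity on $[0,1-\eta]$ or on $[\eta,1]$ does not reach this block.
(ii) In the Fourier route, for disjoint $I,J$ the correlation of the two increments is $-\sqrt{|I||J|/((1-|I|)(1-|J|))}$. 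This tends to $-1$ as $|I|+|J|\to1$, because then $Z_1+Z_2\approx B(1)-B(0)=0$. So this is not a ``weak correlation term.''
(iii) Your fallback, bounding the bridge quadratic form below by a multiple of the Brownian one, holds only with constant $1-(b-a)$. That constant is $0$ here; for instance $\mathrm{Var}(B(1)-B(0))=0$.

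The gap is repairable with tools you already invoke. Write the corner block as $\int_0^\eta\int_0^\eta p_\epsilon(X(u)-Y(w))\,du\,dw$, with $X=B|_{[0,\eta]}$ and $Y(w)=B(1-w)$. Under the bridge law, $(X,Y)$ has density $p_{1-2\eta}(W(\eta)-\widetilde W(\eta))/p_1(0)\leq(1-2\eta)^{-1}$ with respect to two independent planar Brownian motions $W,\widetilde W$ started at the origin. Their intersection local time is the adjacent-pieces one, which converges in $L^2$ with no renormalization; its mean is at most $\int_0^\eta\int_0^\eta\frac{du\,dw}{2\pi(u+w)}<\infty$.

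Alternatively, stay with the Fourier route. In the disjoint case, the $d\lambda\,d\mu$ integral of $|\mathrm{Cov}|$ is at most a constant times $\big(|I||J|(1-|I|)(1-|J|)\big)^{-1/2}(1-|I|-|J|)^{-1}$. This is integrable over the time simplex, because $1-|I|-|J|$ is the sum of the three gap lengths.

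Two smaller points. First, $L^2$ convergence on the off-diagonal region needs integrability of the joint density of $(B(v)-B(u),B(v')-B(u'))$ at $(0,0)$, not boundedness of the one-point density. Second, when you transfer the Brownian result by absolute continuity, you must also show that the difference between the bridge and Brownian centering constants converges. It does: the integrand difference is $\frac{1}{2\pi}\frac{r^2}{(\epsilon+r(1-r))(\epsilon+r)}\leq\frac{1}{2\pi\eta}$ for $r=v-u\leq1-\eta$.
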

\begin{proof}
\cite[Theorem 3.7]{Matsuda}.
\end{proof}

\begin{notation}
Throughout the paper we use the following shorthands:
\[\alpha_t^{\epsilon}(\cdot)=\alpha^\epsilon_{[0,t]^2}(\cdot), \qquad \alpha_{t,s}^{\epsilon}(\cdot)=\alpha^\epsilon_{[0,t]\times[0,s]}(\cdot),\qquad\beta_t^{\epsilon}(B)=\beta^\epsilon_{[0,t]^2_{\leq}}(B)\]
as well as the obvious extensions of these notations for the $\epsilon\to0$ limits, namely, $\alpha_t(\cdot)$, $\alpha_{t,s}(\cdot)$,
and $\gamma_t(\cdot)$.
\end{notation}

In Proposition \ref{prop: approx SILT}, it is necessary to normalize the SILT in $\R^2$ by subtracting the mean in order to get a non-trivial limit. Indeed, the following Lemma illustrates this:
\begin{lemma}\label{Lem: Approx Silt Expectation}
    For $t>0$ fixed, $0\leq a<b\leq t$, and $\epsilon\to0$ we have
    \begin{align*}
        \E[\beta^\epsilon_{[a,b]_{\leq}^2}(B_t^{0,0})]=\frac{b-a}{2\pi}(\log(1/\epsilon)+\log(t))+\frac{1}{2\pi}\int_0^{b-a}\log(r)-\log(t-r)dr+o(1)
    \end{align*}
\end{lemma}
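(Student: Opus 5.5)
The plan is to compute the expectation exactly as a one-dimensional integral, using the Gaussian structure of the bridge increments, and then extract the $\epsilon\to0$ asymptotics by elementary analysis.

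\textbf{Reduction to a one-dimensional integral.} For $u\leq v$ in $[0,t]$, the increment $B_t^{0,0}(v)-B_t^{0,0}(u)$ of a two-dimensional Brownian bridge is a centered Gaussian vector. Its coordinates are independent, each with variance $\sigma^2(r)=r(t-r)/t$, where $r=v-u$. Since $p_\epsilon$ is a Gaussian density, the semigroup identity $p_\epsilon*p_{\sigma^2}=p_{\epsilon+\sigma^2}$ gives
\[
\E\big[p_\epsilon(B_t^{0,0}(v)-B_t^{0,0}(u))\big]=p_{\epsilon+\sigma^2(r)}(0)=\frac{1}{2\pi(\epsilon+\sigma^2(r))}.
\]
Set $L=b-a$. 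By Fubini, and since the set $\{(u,v)\in[a,b]^2_\leq : v-u\in dr\}$ has measure $(L-r)\,dr$ for $r\in[0,L]$, we get
\[
\E[\beta^\epsilon_{[a,b]^2_\leq}(B_t^{0,0})]=\frac1{2\pi}\int_0^L (L-r)\,\frac{t}{\epsilon t+r(t-r)}\,dr .
\]

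\textbf{Splitting off the divergence.} Write $f_\epsilon(r)=t/(\epsilon t+r(t-r))$ and decompose
\[
(L-r)f_\epsilon(r)=\frac{L}{r+\epsilon}+\Big[(L-r)f_\epsilon(r)-\frac{L}{r+\epsilon}\Big].
\]
The first piece integrates exactly:
\[
L\int_0^L\frac{dr}{r+\epsilon}=L\log(1/\epsilon)+L\log L+o(1).
\]
For the bracketed remainder, the partial fraction identity $t/(r(t-r))=1/r+1/(t-r)$ shows that its pointwise limit is
\[
-1+\frac{L-r}{t-r},
\]
which is bounded on $[0,L]$, including when $L=t$.

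\textbf{Main obstacle: passing to the limit in the remainder.} I expect the main work to be justifying $\int_0^L[\cdots]\,dr\to\int_0^L\big(-1+\tfrac{L-r}{t-r}\big)dr$ by dominated convergence. The remainder needs a dominating function uniform in $\epsilon$ on two regions.

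Near $r=0$, the function $f_\epsilon$ differs from $1/(r+\epsilon)$ only through the factor $(t-r)/t=1-O(r)$. A direct computation gives $|f_\epsilon(r)-1/(r+\epsilon)|\lesssim 1$ for small $r$, uniformly in $\epsilon$, so the remainder is bounded there.

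Near $r=t$ (only relevant when $L=t$), the factor $L-r=t-r$ cancels the potential blow-up, since $(t-r)f_\epsilon(r)\leq t/r$. The remainder is therefore bounded there as well.

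On the intermediate region the convergence is uniform. Together these give a constant dominating function, so dominated convergence applies.

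\textbf{Identifying the constant.} It remains to check that the resulting constant matches the form in the statement. Evaluating the limit integral gives
\[
\int_0^L\Big(-1+\frac{L-r}{t-r}\Big)dr=-(t-L)\big(\log t-\log(t-L)\big).
\]
Adding $L\log L$ from the first piece, the total constant is
\[
L\log L+(t-L)\log(t-L)-(t-L)\log t.
\]
Direct antidifferentiation shows this equals $L\log t+\int_0^{L}\big(\log r-\log(t-r)\big)dr$. Dividing by $2\pi$ then yields the claimed formula.
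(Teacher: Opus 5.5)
Your proof is correct. It is also self-contained where the paper is not: the paper gives no argument for this lemma and simply cites \cite[Lemma 3.4]{Matsuda}.

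What your route buys is the exact identity
\[
\E[\beta^\epsilon_{[a,b]^2_\leq}(B_t^{0,0})]=\frac1{2\pi}\int_0^{b-a}(b-a-r)\,\frac{t}{\epsilon t+r(t-r)}\,dr .
\]
This shows immediately that the expectation depends only on $b-a$. It also makes transparent the bound $\sup_{\epsilon\in(0,1)}\big(\E[\beta^\epsilon_t(B^{x,x}_t)]-tc_\epsilon\big)<\infty$, which the paper later draws from this lemma in the proof of Proposition \ref{Proposition: Existence of 2d Trace}. The asymptotic statement by itself only controls small $\epsilon$.

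Your domination step is slightly informal, but it can be made explicit. Set $L=b-a$ and $R_\epsilon(r)=(L-r)f_\epsilon(r)-L/(r+\epsilon)$. For $r\leq \tfrac t2$ one has
\[
f_\epsilon(r)-\frac1{r+\epsilon}=\frac{r^2}{(\epsilon t+r(t-r))(r+\epsilon)}\in\Big[0,\frac2t\Big]\quad\text{and}\quad rf_\epsilon(r)\leq\frac{t}{t-r}\leq 2 .
\]
For $r\geq t/2$, using $L\leq t$, one has $(L-r)f_\epsilon(r)\leq \frac tr\cdot\frac{L-r}{t-r}\leq 2$ and $L/(r+\epsilon)\leq 2$.

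Hence $|R_\epsilon|\leq 4$ on $[0,L]$ uniformly in $\epsilon$, so dominated convergence applies. Your evaluation of the constant is also right:
\[
L\log L+(t-L)\log(t-L)-(t-L)\log t=L\log t+\int_0^L(\log r-\log(t-r))\,dr .
\]
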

\begin{proof}
    See \cite[Lemma 3.4]{Matsuda}.
\end{proof}
To compress and unify the expectation and covariance formulas across dimensions we introduce the following shorthands.
\begin{definition}
\label{def: abc shorthands}
Let $d\in\{1,2\}$, $x,y\in\mathbb R^d$,
and $s,t>0$. Denote
\begin{align*}
\mathcal{P}^d(t,s) &= \begin{cases}\frac{1}{(2\pi t)^{1/2}(2\pi s)^{1/2}}&d=1\\ \frac{e^{t\log t/2\pi}e^{s\log s/2\pi}}{(2\pi t)(2\pi s)}&d=2
\end{cases}\\ 
\mathcal{A}_{t,s}(x,y)&= -\int_0^tV(B_t^{x,x}(u))du-\int_0^sV(\overline{B}_s^{y,y}(u))du\\
\mathcal{B}^d_{t,s}(x,y) &= 
\begin{cases}\frac{1}{2}(\beta_{t}(B_t^{x,x})+\beta_{s}(\overline{B}_s^{y,y})) & d=1\\
\gamma_{t}(B_t^{x,x})+\gamma_{s}(\overline{B}_s^{y,y}) & d=2
\end{cases}\\
\mathcal{C}_{t,s}(x,y)&=\alpha_{t,s}(B_t^{x,x},\overline{B}_s^{y,y})\\
\mathcal{D}^d_t(x)  &= 
\begin{cases}\frac{1}{2}\beta_{t}(B_t^{x,x}) & d=1\\
\gamma_{t}(B_t^{x,x}) & d=2
\end{cases}
\end{align*}
\end{definition}
We are now ready to state formulas for the expectation and covariance of the exponential trace
(see Section \ref{section: FK d=1 proof} for proof when $d=1$ and Section \ref{section: d=2 expectation and covariance proofs} for proof when $d=2$):
\begin{prop}\label{prop: 2D FK expectation and covariance}
For $d=2$, there exists $\vartheta>0$ such that for every $s,t\in(0,\vartheta)$
    \begin{align}\label{eqn: FK Expectation}
        \E[\mathrm{Tr}[e^{-tH}]] = \mathcal{P}^d(t,t)^{1/2}\int_{\R^d}\E[e^{-\int_0^tV(B_t^{x,x}(u))du+\mathcal{D}^d_t(x)}]dx.
    \end{align}
    \begin{align}\label{eqn: FK Covariance}
        \mathrm{Cov}\big[\mathrm{Tr}[e^{-tH}],\mathrm{Tr}[e^{-sH}]\big] = \mathcal{P}^d(t,s)\int_{(\R^d)^2}\E\left[(e^{\mathcal{A}_{t,s}(x,y)+B^d_{t,s}(x,y)})(e^{\mathcal{C}_{t,s}(x,y)}-1)\right]dxdy
    \end{align}
For $d=1$, the same formulas hold for every $s,t>0$.
\end{prop}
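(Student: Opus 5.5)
The plan is to prove both formulas first for the smoothed operators $H_\epsilon+c_\epsilon$ (or directly for $H$ when $d=1$), and then pass to the limit. For $d=1$ the limit is unnecessary.

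\textbf{Representation of the trace.} For fixed $\epsilon>0$, Lemma \ref{Lemma: V+xi epsilon} lets us apply the classical Feynman-Kac formula for the trace of $e^{-tH_\epsilon}$:
\[
\mathrm{Tr}[e^{-t(H_\epsilon+c_\epsilon)}]=\int_{\R^2}p_t(0)\,\E_B\Big[e^{-\int_0^t V(B_t^{x,x}(u))du-\int_0^t\xi_\epsilon(B_t^{x,x}(u))du-tc_\epsilon}\Big]dx .
\]
The noise $\xi$ is independent of the bridge. Conditionally on the bridge, $\int_0^t\xi_\epsilon(B(u))du$ is a centered Gaussian with variance $\int_{[0,t]^2}p_\epsilon(B(u)-B(v))\,du\,dv=2\beta^\epsilon_t(B)$. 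So Fubini/Tonelli, together with the Gaussian moment generating function, gives
\[
\E\,\mathrm{Tr}[e^{-t(H_\epsilon+c_\epsilon)}]=p_t(0)\int\E\Big[e^{-\int_0^tV}\,e^{\beta^\epsilon_t-tc_\epsilon}\Big]dx .
\]

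\textbf{Product of two traces.} For the mixed moment we take two independent bridges $B_t^{x,x}$ and $\overline B_s^{y,y}$. By \eqref{eqn: mixed epsilon covariance}, the sum of their two noise integrals has variance $2\beta_t^\epsilon+2\beta_s^\epsilon+2\alpha^\epsilon_{t,s}$. The covariance therefore equals
\[
p_t(0)p_s(0)\iint\E\Big[e^{\mathcal A_{t,s}}\,e^{\beta_t^\epsilon-tc_\epsilon+\beta_s^\epsilon-sc_\epsilon}\big(e^{\alpha^\epsilon_{t,s}}-1\big)\Big]dx\,dy .
\]
By Lemma \ref{Lem: Approx Silt Expectation} with $[a,b]=[0,t]$,
\[
\beta^\epsilon_t-tc_\epsilon=\big(\beta^\epsilon_t-\E\beta^\epsilon_t\big)+\tfrac{t\log t}{2\pi}+\tfrac1{2\pi}\int_0^t\big(\log r-\log(t-r)\big)dr+o(1).
\]
The last integral vanishes by symmetry. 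Hence the deterministic prefactor becomes $e^{t\log t/2\pi}/(2\pi t)$, which is exactly $\mathcal P^2$.

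\textbf{Case $d=1$.} Here the same computation works directly using the trace formula from \cite{GLFK}, with $\int\xi(B)=\langle L_t,\xi\rangle$ having variance $\beta_t$. This gives the $\tfrac12\beta$ terms and no renormalization.

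\textbf{Passing to the limit ($d=2$).} The main obstacle is removing $\epsilon$. By Proposition \ref{Proposition: Existence of 2d Trace}, the smoothed traces converge in $L^2$ to $\mathrm{Tr}[e^{-tH}]$, so the left-hand sides converge: first and second moments are continuous under $L^2$ convergence. On the right, Propositions \ref{Prop: MILT convergence} and \ref{prop: approx SILT} give convergence in probability of the exponents to $\gamma_t,\gamma_s,\alpha_{t,s}$. To exchange the limit with the expectation and the $dx\,dy$ integrals, I will show uniform integrability. The strategy is to bound $\sup_\epsilon\E[e^{p(\gamma^\epsilon_t+\gamma^\epsilon_s+\alpha^\epsilon_{t,s})}]<\infty$ for some $p>1$ and $t,s<\vartheta$. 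This uses the known exponential integrability of renormalized SILT for small times (Le Gall/Matsuda-type bounds) and Cauchy-Schwarz. The smallness of $\vartheta$ is needed precisely here. The resulting bound is uniform in $x,y$, because $\gamma$ is translation invariant and $\alpha\le$ a function of $|x-y|$ handled via Cauchy-Schwarz.

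For domination in $x,y$, I will use $e^{-\int V}\le e^{bt}e^{-a\int|B|^\kappa}$, whose $x$-integral is finite. The factor $e^{\alpha}-1$ is controlled by $\alpha e^{\alpha}$, and it has Gaussian-type decay in $|x-y|$ since the bridges must intersect. Dominated and Vitali convergence then give \eqref{eqn: FK Expectation} and \eqref{eqn: FK Covariance}.
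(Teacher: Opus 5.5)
Your proposal is correct and follows essentially the same route as the paper: Feynman--Kac for the smoothed traces, the conditional Gaussian MGF producing $\beta^\epsilon_t$, $\beta^\epsilon_s$ and $\alpha^\epsilon_{t,s}$, Lemma~\ref{Lem: Approx Silt Expectation} to identify the prefactor $\mathcal P^2(t,s)$, and then, for $d=2$, $L^2$ convergence on the left together with dominated convergence in $(x,y)$ (via \eqref{eqn: V uniform bound} and exponential moments of bridge suprema) and uniform integrability in $\omega$ on the right. The differences are cosmetic: you pass to the limit directly in the integrand containing $e^{\alpha^\epsilon_{t,s}}-1$ rather than treating $\E[XY]$ and $\E[X]\E[Y]$ separately, and the extra decay in $|x-y|$ from the MILT is unnecessary. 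Be aware, though, that in $d=2$ the $\epsilon$-uniform exponential bound on $\alpha^\epsilon_{t,s}$ is its own input (Proposition~\ref{Prop: MILT UI}); Cauchy--Schwarz only bounds it by the unrenormalized $\beta^\epsilon$'s, which diverge as $\epsilon\to0$.
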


\subsection{Proof of Proposition \ref{prop: 2D FK expectation and covariance}; $d=1$}\label{section: FK d=1 proof}

Let $d=1$.
The expectation formula follows immediately from taking the expectation of the Feynman-Kac formula for the exponential trace proven in \cite[Theorem 2.24]{GLFK}. In particular, it is shown that \[\mathrm{Tr}[e^{-tH}]=\frac{1}{\sqrt{2\pi t}}\int_\R\E_B\left[e^{-\int_0^tV(B_t^{x,x}(u))du-\xi(L_t(B_t^{x,x}))}\right]dx,\]
where $\xi(f)$ denotes stochastic integration of $f$ with respect to a Brownian motion $W$ such that $\xi=W'$, $\mathbb E_B$ denotes the expectation with respect to $B^{x,x}_t$ only (hence, conditional on $\xi$), and we assume that $\xi$ and $B^{x,x}_t$ are independent.
The expectation formula \eqref{eqn: FK Expectation} then follows immediately from an application of Fubini's Theorem and the fact that $\E[e^{-\xi(f)}]=e^{\frac{1}{2}\norm{f}^2}$ (see \cite[Section 4.5]{LGL} where this is done explicitly). 

The covariance formula then follows from a simple computation using \[\mathrm{Cov}\big[\mathrm{Tr}[e^{-tH}\big],\mathrm{Tr}[e^{-sH}]]=\E\big[\mathrm{Tr}[e^{-tH}]\cdot\mathrm{Tr}[e^{-sH]}]\big]-\E\big[\mathrm{Tr}[e^{-tH}]\big]\E\big[\mathrm{Tr}[e^{-sH}]\big].\] 
Observe that another application of Fubini's Theorem gives us
\[\E\big[\mathrm{Tr}[e^{-tH}]\cdot\mathrm{Tr}[e^{-sH}]\big] = \frac{1}{2\pi\sqrt{ts}}\int_{\R^2}\E[e^{\mathcal{A}_{t,s}(x,y)+\mathcal{B}^1_{t,s}(x,y)+\mathcal{C}_{t,s}(x,y)}]dxdy\]
Subtracting $\E\big[\mathrm{Tr}[e^{-tH}]\big]\E\big[\mathrm{Tr}[e^{-sH}]\big]$ then gives us the desired covariance formula.

\subsection{Feynman-Kac for Smooth Trace; $d=2$ (Proof of Lemma \ref{Lemma: V+xi epsilon})}\label{section: smooth trace proofs}

It is well-known that if Lemma \ref{Lemma: V+xi epsilon} holds, then for every $\epsilon,t>0$,
\begin{align}
\label{eqn: d=2 epsilon raw Feynman-Kac}
\mathrm{Tr}[e^{-t H_\epsilon}]=\frac1{2\pi t}\int_{\mathbb R^2}\mathbb E_B\left[\exp\left(-\int_0^tV\big(B^{x,x}_t(s)\big)+\xi_\epsilon\big(B^{x,x}_t(s)\big)ds\right)\right]dx<\infty
\end{align}
almost surely; see \cite[(9.4) and (9.5)]{SimonBook}.
Similarly to $d=1$, we assume in \eqref{eqn: d=2 epsilon raw Feynman-Kac} that $B^{x,x}_t$ and $\xi_\epsilon$ are independent, and that $\mathbb E_B$ denotes the expectation with respect to $B^{x,x}_t$ only.
Thus, it suffices to prove Lemma \ref{Lemma: V+xi epsilon}. 
Toward this end, we record two key properties:
For every $\epsilon>0$, it holds that
\begin{enumerate}[i.]
\item $\xi_\epsilon$ is continuous almost surely (this is immediate by definition of $\xi_\epsilon$ as the convolution of $\xi$ against a smooth function), and
\item there exists a random variable $C_\epsilon>0$ such that, almost surely,
\begin{align}
\label{eqn: xi eps log bound}
\sup_{|x|\leq k}|\xi_\epsilon(x)|\leq C_\epsilon\sqrt{\log k},\qquad k=2,3,\ldots.
\end{align}
This is a straightforward consequence of standard Gaussian suprema estimates and the Borel-Cantelli lemma; e.g., \cite[(3.1)]{Ortega}.
\end{enumerate}
On the one hand, if we combine
\eqref{Equation: V Lower Bound} and \eqref{eqn: xi eps log bound}, then we obtain that
$V(x)+\xi_\epsilon(x)\geq\mathfrak a_\epsilon|x|^\kappa-\mathfrak b_\epsilon$
almost-surely for some random $\mathfrak a_\epsilon,\mathfrak b_\epsilon>0$.
On the other hand, if we combine the facts that
$(V+\xi_\epsilon)_+\leq V_++(\xi_\epsilon)_+$, that $V_+\in K_2^{\mathrm{loc}}$ (by assumption), that $K_2^{\mathrm{loc}}$ is a vector space (e.g., \cite[Page 454]{Simon}), and that every continuous function is locally Kato (since it is locally bounded; see \cite[(A.13)]{Simon}),
then we obtain that $(V+\xi_\epsilon)_+\in K_2^{\mathrm{loc}}$ whenever $\xi_\epsilon$ is continuous,
which occurs with probability one.
This simultaneously completes the proof
of \eqref{eqn: d=2 epsilon raw Feynman-Kac} and Lemma \ref{Lemma: V+xi epsilon}.

\subsection{Local Time Preliminaries}\label{section: LT prelims}
In this section, we compile some standard results on the intersection local times of Brownian Bridges, which will be needed to prove Propositions \ref{Proposition: Existence of 2d Trace} and \ref{prop: 2D FK expectation and covariance}.

We begin with the following uniform integrability estimates which will be important for proving our Feynman-Kac formulas as well as the convergence of the smoothed trace.
\begin{prop}\label{Prop: MILT UI}
If $d=1$, then for every $c,s,t>0$,
\[\sup_{x,y\in\mathbb R}\E\big[\exp(c\alpha_{t,s}(B^{x,x}_t,\overline B^{y,y}_s))\big]<\infty.\]
    If $d=2$, then for every $c>0$ there exists $\theta_c>0$ such that for every $s,t\in(0,\theta_c)$ we have
    \[\sup_{\epsilon>0,\,x,y\in\R^2}\E\big[\exp(c\alpha^\epsilon_{t,s}(B_t^{x,x},\overline{B}_s^{y,y}))\big]\leq\sup_{x,y\in\R^2}\E[\exp(c\alpha_{t,s}(B_t^{x,x},\overline{B}_s^{y,y}))]<\infty.\]
\end{prop}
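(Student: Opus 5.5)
The plan is to get exponential moments of the mutual intersection local time from a Gaussian-representation argument, handling the $d=1$ and $d=2$ cases separately but along the same lines.

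\textbf{Case $d=1$.} By Cauchy--Schwarz,
\[\alpha_{t,s}\leq \|L_t(B^{x,x}_t)\|_2\,\|L_s(\overline B^{y,y}_s)\|_2\leq \tfrac12\big(\beta_t(B^{x,x}_t)+\beta_s(\overline B^{y,y}_s)\big).\]
The bridges are independent, so
\[\E[e^{c\alpha_{t,s}}]\leq \E[e^{c\beta_t/2}]\,\E[e^{c\beta_s/2}].\]
Translation invariance makes $\beta_t(B^{x,x}_t)$ independent of $x$, which removes the supremum over $x,y$. It then suffices to show $\E[e^{c\beta_t(B^{0,0}_1)}]<\infty$ for all $c$. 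For this:
\begin{itemize}
\item Bound $\beta\le \sup_a L^a\cdot\int L^a\,da = t\sup_a L^a_t$.
\item Use the Gaussian tails of the bridge's maximal local time, e.g.\ via Ray--Knight or a comparison with Brownian motion through absolute continuity on $[0,t/2]$, together with time reversal.
\end{itemize}
This gives finiteness for every $c$.

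\textbf{Case $d=2$, first inequality.} Write $p_\epsilon(z)=\int_{\R^2}p_{\epsilon/2}(z-w)p_{\epsilon/2}(w)\,dw$. Conditionally on the paths, this gives
\[\alpha^\epsilon_{t,s}=\int_{\R^2}\ell^\epsilon_t(w)\,\bar\ell^\epsilon_s(w)\,dw,\qquad \ell^\epsilon_t(w)=\int_0^t p_{\epsilon/2}(B_t^{x,x}(u)-w)\,du,\]
so $\alpha^\epsilon_{t,s}$ is an inner product of smoothed occupation densities. Expanding the exponential gives
\[\E[(\alpha^\epsilon_{t,s})^n]=\int_{(\R^2)^n}\E\Big[\prod_i\ell^\epsilon_t(w_i)\Big]\,\E\Big[\prod_i\bar\ell^\epsilon_s(w_i)\Big]\,dw.\]
Each factor is a smoothing of the corresponding $\epsilon=0$ multi-point density by $p_{\epsilon/2}^{\otimes n}$, and the two factors are independent. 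The step $p_{\epsilon/2}\ast p_{\epsilon/2}$ pairs them, and Cauchy--Schwarz in $L^2((\R^2)^n)$ plus the $L^2$-contractivity of Gaussian convolution yields
\[\E[(\alpha^\epsilon)^n]\le \E[\alpha^n],\]
where $\E[\alpha^n]$ is the limit moment (an $L^2$-pairing of the unsmoothed $n$-point occupation densities). This is the standard monotonicity in Chen's book. If Cauchy--Schwarz instead gives $\|f\|\|g\|$ with possibly unequal norms, I would use Fourier instead: in Fourier variables the moment is $\int \hat F(\xi)\overline{\hat G(\xi)}\prod_i e^{-\epsilon|\xi_i|^2/2}\,d\xi$. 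Showing this is increasing as $\epsilon\downarrow0$ needs positivity of the integrand, which I expect to get by averaging over $x,y$ or by symmetrizing via Cauchy--Schwarz after a time-shift argument. This is the delicate point. Summing the series and using Fatou/Proposition~\ref{Prop: MILT convergence} identifies the right side with the limit object.

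\textbf{Case $d=2$, finiteness for small $s,t$.} I would bound
\[\E[e^{c\alpha_{t,s}}]\leq \E[e^{2c\,\alpha_{t,s}(B,\overline B)}]\]
uniformly in $x,y$ via the moment bound
\[\E[\alpha^n_{t,s}]\le (n!)\,(C\sqrt{ts})^{n}.\]
This follows from the $n$-fold Green-function integral. Each time integral of $p_u(z)$ over $u\le t$ in $\R^2$ contributes a logarithmic singularity integrable in space. Scaling (bridge $\to$ Brownian motion on half intervals via absolute continuity with bounded density, as $x,y$ enter only through shifts, and shifts only decrease the overlap) produces the factor $t^{n/2}s^{n/2}$ up to logs, or more crudely $(C\max(s,t))^n n!$. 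The series then converges when $cC\max(s,t)<1$, which defines $\theta_c$.

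The main obstacle is the two-dimensional moment bound uniform in $x,y$: the Brownian bridge density must be controlled near the endpoint. I would split $[0,t]$ into two halves, use the absolute continuity of each half with Brownian motion, and apply Hölder to reduce to mutual intersection local times of free Brownian motions, whose exponential integrability for small times is classical (Le Gall, Chen).
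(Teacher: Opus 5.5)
Your $d=1$ argument matches the paper's: it also uses $\alpha_{t,s}\le\frac12(\beta_t+\beta_s)$ and then Proposition \ref{Prop: 1D SILT Bound}, which your maximal-local-time argument re-proves. For $d=2$ the paper simply cites \cite[Lemma 5.15]{GLP}. Your finiteness route (half-interval absolute continuity, reduction to free Brownian motions, scaling, classical $n!$ moment growth) is a reasonable self-contained substitute.

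The genuine gap is the first inequality for $d=2$. You rely on $\E[(\alpha^\epsilon_{t,s})^n]\le\E[\alpha^n_{t,s}]$ at the \emph{same} pair $(x,y)$, and this is false when $x\neq y$. Already for $n=1$, the proof of Proposition \ref{Prop: approx Identity Limit} gives
\[\E\big[\alpha^\epsilon_{t,s}(B^{x,x}_t,\overline B^{y,y}_s)\big]=\int_0^t\int_0^s p_{\epsilon+z(s,t,u,v)}(x-y)\,du\,dv.\]
In $d=2$ the map $r\mapsto p_r(w)$ is increasing on $(0,|w|^2/2)$, and $z(s,t,u,v)\le(s+t)/4$. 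So the smoothed mean strictly exceeds the unsmoothed one whenever $|x-y|^2>2\epsilon+(s+t)/2$.

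Exponential moments also compare the wrong way at fixed $(x,y)$. Take $\epsilon>0$ fixed, $|x-y|=1$ and $s,t\to0$. Then $\E[e^{c\alpha^\epsilon_{t,s}}]-1\ge c\,\E[\alpha^\epsilon_{t,s}]\gtrsim ts$. By contrast, $\alpha_{t,s}$ vanishes unless the ranges of the two bridges meet, an event of probability $O(e^{-C/\max\{s,t\}})$, so $\E[e^{c\alpha_{t,s}}]-1$ is exponentially small. Hence the supremum over $x,y$ on the right-hand side is essential, and any proof must compare with a different spatial configuration.

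Your Cauchy--Schwarz bound instead compares with $\alpha_{t,t}$ and $\alpha_{s,s}$ at coinciding starting points; this is the paper's \eqref{eqn: MILT Holder}. That can give finiteness of $\sup_{\epsilon,x,y}\E[e^{c\alpha^\epsilon_{t,s}}]$, which is all that is used later, but not the inequality as stated.

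There are two ways to close this.

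\textbf{Fourier route.} In your Fourier formula the integrand is $e^{i(\sum_j\xi_j)\cdot(x-y)}$ times a nonnegative function, because the transforms of the centered $n$-point occupation densities are time integrals of Gaussian characteristic functions. Bounding the phase by $1$ gives $\E[(\alpha^\epsilon_{t,s}(B^{x,x}_t,\overline B^{y,y}_s))^n]\le\E[(\alpha^\epsilon_{t,s}(B^{0,0}_t,\overline B^{0,0}_s))^n]$. At $x=y$ the integrand is nonnegative, so the right side increases as $\epsilon\downarrow0$; no averaging or symmetrization is needed. You must still identify $\lim_{\epsilon\to0}\E[(\alpha^\epsilon_{t,s}(B^{0,0}_t,\overline B^{0,0}_s))^n]$ with $\E[\alpha_{t,s}(B^{0,0}_t,\overline B^{0,0}_s)^n]$. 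Fatou points the wrong way here: it only gives $\E[\alpha^n]\le\liminf_\epsilon\E[(\alpha^\epsilon)^n]$. You need convergence of moments, e.g.\ uniform integrability from finiteness of the $\epsilon=0$ Fourier integral at order $2n$.

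\textbf{Jensen route (more direct).} The occupation density of $(u,v)\mapsto B^{x,x}_t(u)-\overline B^{y,y}_s(v)$ exists in $L^2(dz)$ for $d=2$, and it gives
\[\alpha^\epsilon_{t,s}(B^{x,x}_t,\overline B^{y,y}_s)=\int_{\R^2}p_\epsilon(z)\,\alpha_{t,s}(B^{x+z,x+z}_t,\overline B^{y,y}_s)\,dz.\]
Since $p_\epsilon(z)\,dz$ is a probability measure, Jensen and Tonelli yield
\[\E\big[e^{c\alpha^\epsilon_{t,s}(B^{x,x}_t,\overline B^{y,y}_s)}\big]\le\int_{\R^2}p_\epsilon(z)\,\E\big[e^{c\alpha_{t,s}(B^{x+z,x+z}_t,\overline B^{y,y}_s)}\big]\,dz\le\sup_{x',y'}\E\big[e^{c\alpha_{t,s}(B^{x',x'}_t,\overline B^{y',y'}_s)}\big].\]
This is exactly the stated form, and it explains why the supremum appears.
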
 
\begin{proof}
For $d=1$, note that
\begin{multline}
\label{eqn: 1D MILT quadratic form inequality}
\alpha_{t,s}(B_t^{x,x},\overline{B}_s^{y,y}) = \int_{-\infty}^\infty  L_t^a(B^{x,x}_t)L_s^a(\overline{B}^{y,y}_s)da\\
\leq\frac12\big(\|L_t(B^{x,x}_t)\|_2^2+\|L_s(\overline{B}^{y,y}_s)\|_2^2\big)
=\frac{\beta_{t}(B_t^{x,x})+\beta_s(\overline{B}_s^{y,y})}2.
\end{multline}
The result then follows from Proposition \ref{Prop: 1D SILT Bound} below.
    For $d=2$, the result is proved in \cite[Lemma 5.15]{GLP}. 
\end{proof}

We take this opportunity to now record two propositions that give us exponential bounds on the SILT in one and two dimensions, respectively, which we will need later for asymptotically bounding the semigroup covariance.

\begin{prop}\label{Prop: 1D SILT Bound}
    Let $d=1$. For every $c,t>0$,
    \begin{align}\label{eqn: 1D SILT Bound}
        \sup_{x\in\mathbb R}\E[\exp(c\beta_{t}(B_t^{x,x}))]<\infty.
    \end{align}
    
\end{prop}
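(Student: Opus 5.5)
First, I reduce to $x=0$ and $t=1$. Since $B^{x,x}_t=x+\sqrt t\,B^{0,0}_1(\cdot/t)$ and local time is translation invariant in space, $L^a_t(B^{x,x}_t)=L^{a-x}_t(B^{0,0}_t)$. Hence $\beta_t(B^{x,x}_t)=\beta_t(B^{0,0}_t)$ does not depend on $x$, and the supremum over $x$ is trivial. Brownian scaling then gives $L^a_t(B^{0,0}_t)=\sqrt t\,L^{a/\sqrt t}_1(B^{0,0}_1)$. Squaring and integrating in $a$ yields
\[
\beta_t(B^{0,0}_t)=t^{3/2}\,\beta_1(B^{0,0}_1).
\]
It therefore suffices to prove that $\E[\exp(c\,\beta_1(B^{0,0}_1))]<\infty$ for every $c>0$.

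Next, I bound the self-intersection local time by $\beta_1=\int L_1^a(B)^2\,da\le \sup_a L^a_1(B)\cdot\int L_1^a\,da=\sup_a L^a_1(B)$, where $B=B^{0,0}_1$ and the occupation measure has total mass $1$. So I need $\E[\exp(c\,L^*_1)]<\infty$ for all $c$, where $L^*_1=\sup_a L^a_1(B)$. Gaussian-type tails for $L^*_1$ are enough, since then every exponential moment is finite.

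To get these tails I split the bridge at time $1/2$, which gives $L^*_1\le L^*_{[0,1/2]}+L^*_{[1/2,1]}$. On $[0,1/2]$ the bridge law is absolutely continuous with respect to Brownian motion $W$, with density $\sqrt2\,e^{-W(1/2)^2}\le\sqrt2$, because the density is $p_{1/2}(W(1/2))/p_1(0)$. By time reversal, the same bound holds on $[1/2,1]$. Cauchy–Schwarz then reduces the problem to showing $\E[\exp(2c\,L^*_{1/2}(W))]<\infty$ for Brownian motion. For this I use the Ray–Knight theorems, or more simply the known sub-Gaussian tail $\P(L^*_{1}(W)>\lambda)\le Ce^{-\lambda^2/2}$. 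A quick route to that tail is that $L^0_1(W)\overset{d}{=}|W(1)|$. Alternatively, one can cite the classical result that $\sup_a L^a_1$ has Gaussian tails; see the Csáki or Borodin estimates, or use the Kesten-type bound via Tanaka's formula together with Burkholder–Davis–Gundy moment growth $\E[(L^*_1)^p]^{1/p}\lesssim\sqrt p$.

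The main obstacle is justifying the Gaussian tail of $L^*$ uniformly in the level $a$, not just at $a=0$. I would handle it through moment bounds: by Tanaka, $L^a_1=2\big(|W_1-a|-|a|-\int_0^1\mathrm{sgn}(W_u-a)\,dW_u\big)$, so $L^*_1\le 4\sup_{u\le1}|W_u|+2\sup_a\big|\int_0^1\mathrm{sgn}(W_u-a)\,dW_u\big|$. The first term has Gaussian tails. For the second term, the restriction $|a|\le\sup|W|$, combined with BDG applied to increments in $a$ and a Kolmogorov/Garsia chaining argument, gives $L^p$ norms of order $\sqrt p$. The resulting bound $\E[(L^*_1)^p]\le (C\sqrt p)^p$ then implies finiteness of every exponential moment, which concludes the proof.
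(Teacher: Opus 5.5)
The paper does not prove this statement itself; it only cites \cite[Lemma 5.11]{GLFK}. Your proposal is therefore a genuinely different, nearly self-contained route, and the reduction part is correct:
\begin{itemize}
\item Under the paper's coupling, $\beta_t(B^{x,x}_t)=\beta_t(B^{0,0}_t)=t^{3/2}\beta_1(B^{0,0}_1)$ pathwise. This is the same scaling the paper uses later in \eqref{eqn: Bd scaling}.
\item $\beta_1\le\sup_aL^a_1$, because the occupation measure has total mass one.
\item Splitting at time $1/2$, the density bound $p_{1/2}(W_{1/2})/p_1(0)=\sqrt2\,e^{-W_{1/2}^2}\le\sqrt2$, time reversal and Cauchy--Schwarz reduce everything to $\E[e^{c\sup_aL^a_1(W)}]<\infty$ for every $c>0$.
\end{itemize}
So the only remaining input is super-exponential tails for the maximal local time of Brownian motion. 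Compared with the citation, this shows why uniformity in $x$ is free, makes the $t$-dependence explicit, and isolates that one input as a classical fact about Brownian local times (Kesten, Cs\'aki, Borodin). If you simply cite that fact, your proof is complete.

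What does not hold up is your own derivation of that input.
\begin{itemize}
\item The bound $\P(\sup_aL^a_1>\lambda)\le Ce^{-\lambda^2/2}$ is too strong: the sharp tail is of order $\lambda e^{-\lambda^2/2}$. Any bound $Ce^{-c\lambda^2}$ would do, so this is harmless.
\item $L^0_1\overset{d}{=}|W_1|$ controls only a single level, not the supremum.
\item Your Tanaka/BDG sketch glosses over the bracket. For $a<b$,
$\langle M(a)-M(b)\rangle_1=4\int_0^1\mathbf{1}_{\{a<W_u<b\}}\,du=4\int_a^bL^x_1\,dx$, which involves the local time itself.
\end{itemize}
The third point has two naive resolutions, and neither gives what you claim:
\begin{itemize}
\item If you bound the bracket by $4|b-a|L^*_1$, you get the self-improving inequality $x_p\lesssim\sqrt p+\sqrt p\,x_{p/2}^{1/2}$ for $x_p=\|L^*_1\|_p$. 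This only closes at $x_p\lesssim p$, i.e.\ exponential moments of small order, not of every order as the statement requires.
\item If you bound occupation times directly, $\E[(\int_0^1\mathbf{1}_I(W_u)\,du)^k]\le k!\,|I|^k/(2^{k/2}\Gamma(k/2+1))$, you get $\|M(a)-M(b)\|_p\lesssim p^{3/4}|b-a|^{1/2}$, not $\sqrt p$. A single chaining over the whole random range $|a|\le\sup_{u\le1}|W_u|$ then loses too much to reach super-exponential tails.
\end{itemize}

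A correct self-contained finish works on $\{\sup_{u\le1}|W_u|\le\lambda\}$ with the grid $a_j=j\lambda^{-2}$ in $[-\lambda,\lambda]$:
\begin{itemize}
\item Bound each $\P(L^{a_j}_1>\lambda/2)\le\P(|W_1|>\lambda/2)$ using the strong Markov property at $T_{a_j}$ and $L^0_1\overset{d}{=}|W_1|$. This is where your observation actually enters.
\item Control $\sup_{a\in[a_j,a_{j+1}]}|M(a)-M(a_j)|$ by dyadic chaining with the $p^{3/4}$ increment bound and Markov's inequality at $p=\lambda^2$. This makes each cell's contribution of order $(C\lambda^{-1/2})^{\lambda^2}$.
\item A union bound over the $O(\lambda^3)$ cells gives $\P(\sup_aL^a_1>\lambda)\lesssim\lambda^3e^{-\lambda^2/8}$, which suffices.
\end{itemize}
A minor point: with the paper's occupation-density normalization, Tanaka's formula has no factor $2$. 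This is harmless for your upper bound.
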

\begin{proof}
\cite[Lemma 5.11]{GLFK}.
\end{proof}

\begin{prop}\label{Prop: 2D SILT Bound}
    Let $d=2$. For every $c>0$, there exists $\theta_c>0$ such that for every $t\in(0,\theta_c)$,
    \begin{align}\label{eqn: 2D SILT Bound}
        \sup_{x\in\R^2}\E[\exp(c\gamma_t(B_t^{x,x}))]<\infty.
    \end{align}
\end{prop}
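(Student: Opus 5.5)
First I would make the bound independent of $x$. By the coupling in Definition \ref{def: BB Couplings}, $B_t^{x,x}(u)-B_t^{x,x}(v)=B_t^{0,0}(u)-B_t^{0,0}(v)$, so $\beta^\epsilon$ and $\gamma_t$ do not depend on $x$. It therefore suffices to take $x=0$. Next I would rescale to time $1$. Put $B_t^{0,0}(u)=\sqrt t B_1^{0,0}(u/t)$. In $d=2$ one has $p_\epsilon(\sqrt t z)=t^{-1}p_{\epsilon/t}(z)$, and the $dudv$ integral over $[0,t]^2_\le$ contributes a factor $t^2$. Hence $\beta_t^\epsilon(B_t^{0,0})=t\,\beta^{\epsilon/t}_{[0,1]^2_\le}(B_1^{0,0})$. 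Passing to the limit in Proposition \ref{prop: approx SILT}, this gives $\gamma_t(B_t^{0,0})\overset{d}{=}t\,\gamma_1(B_1^{0,0})$. The claim then reduces to the following: there exists $\lambda_0>0$ with $\E[\exp(\lambda\gamma_1(B_1^{0,0}))]<\infty$ for all $\lambda<\lambda_0$. Given this, we take $\theta_c=\lambda_0/c$.

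To prove exponential integrability for small $\lambda$, I would use the standard Le Gall / Bass--Chen dyadic decomposition, adapted to the bridge. First split $[0,1]^2_\le$ into the squares $[0,1/2]^2_\le$ and $[1/2,1]^2_\le$ and the rectangle $[0,1/2]\times[1/2,1]$. Then iterate: at level $n$ there are $2^n$ dyadic diagonal triangles and, off the diagonal, rectangles $J_{n,k}\times J'_{n,k}$ of side $2^{-n}$. The renormalized SILT is
\[
\gamma_1=\sum_{n\ge1}\sum_{k=1}^{2^{n-1}}\big(\alpha_{J_{n,k}\times J'_{n,k}}-\E\alpha_{J_{n,k}\times J'_{n,k}}\big),
\]
with each term a centered mutual intersection local time of two adjacent pieces of the path.

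On each dyadic time block away from the endpoints, the bridge is absolutely continuous with respect to Brownian motion, with a bounded density. By Proposition \ref{Prop: MILT UI} (or the analogous Brownian result), each MILT on a block of length $2^{-n}$ scales as $2^{-n}$ times a copy of a fixed MILT. That fixed MILT has exponential moments of some order $c_0>0$. I would handle the two halves near $u=1$ by time reversal symmetry of the bridge. For the endpoint, I would use the fact that on $[0,1/2]$ and $[1/2,1]$ the bridge has a density with respect to Brownian motion that is bounded in $L^p$.

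Summing over levels uses Hölder's inequality with weights $\sum_n 2^{-n/2}$. Within a level, I would use the conditional independence of the pieces given the dyadic endpoint values. The centered sum at level $n$ then has exponential moment $\le \exp(C\lambda^2 2^{-n})$ for $\lambda 2^{-n}\le c_0$. This yields finiteness for all $\lambda$ below some $\lambda_0$, as in \cite[Le Gall]{Matsuda}-type arguments.

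The main obstacle is the bridge's lack of independent increments. Near the endpoints the density with respect to Brownian motion blows up, so the independence argument across dyadic blocks is not exact. The first thing I would try is to avoid this entirely by citing \cite[Theorem 3.7 and its proof]{Matsuda}, where exponential integrability of the renormalized bridge SILT for small parameter is established. Failing that, I would split the interval at $1/2$, apply Cauchy--Schwarz to separate the two halves and the cross-MILT term, and compare each half to Brownian motion by the Radon--Nikodym bound $\frac{dP_{\text{bridge}}}{dP_{\text{BM}}}\big|_{[0,1/2]}\le 2$ in $d=2$. The known Brownian result (Le Gall) then applies, and the cross term is controlled by Proposition \ref{Prop: MILT UI}.
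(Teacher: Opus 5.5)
Your reduction is correct. Under the coupling of Definition \ref{def: BB Couplings}, $\beta_t^\epsilon(B_t^{x,x})=t\,\beta^{\epsilon/t}_{[0,1]^2_\le}(B_1^{0,0})$, so $\gamma_t(B_t^{x,x})=t\,\gamma_1(B_1^{0,0})$. This is the same scaling the paper uses in \eqref{eqn: Bd scaling}. An exponential moment of small order for $\gamma_1(B_1^{0,0})$ then gives the claim with $\theta_c=\lambda_0/c$. The paper proves nothing beyond this: its proof is the one-line citation \cite[Remark 5.8]{GLP}, which supplies exactly that small-parameter bound. Note that the paper uses \cite[Theorem 3.7]{Matsuda} only for the convergence in Proposition \ref{prop: approx SILT}. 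Nothing here indicates it contains the exponential bound, so if you take the citation route, cite \cite[Remark 5.8]{GLP}.

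Your self-contained fallback has a genuine gap at the cross term $\alpha_{[0,1/2]\times[1/2,1]}(B_1^{0,0},B_1^{0,0})$. Proposition \ref{Prop: MILT UI} concerns two \emph{independent} loops $B_t^{x,x}$ and $\overline B_s^{y,y}$, so it says nothing about two pieces of one and the same bridge. Those pieces are dependent and pinned together at two corners: at $(u,v)=(1/2,1/2)$, since both pass through $B(1/2)$, and at $(u,v)=(0,1)$, since $B(0)=B(1)=0$. You also cannot transfer a Brownian-motion bound, because this term depends on the whole path on $[0,1]$, where the bridge is singular with respect to Brownian motion. The dyadic sketch misuses Proposition \ref{Prop: MILT UI} in the same way. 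It has a second problem: given the dyadic values, the blocks are independent but not identically distributed, and they are centered by unconditional rather than conditional means. So the bound $\exp(C\lambda^2 2^{-n})$ does not follow as stated.

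The repair is cheap: split $[0,1]$ into thirds $I_1,I_2,I_3$ instead of halves.
\begin{itemize}
\item Every term in the resulting decomposition of $\gamma_1$, except the cross term over $I_1\times I_3$, involves the path only on $I_1\cup I_2$ or on $I_2\cup I_3$. On these sets the bridge has density at most $p_{1/3}(0)/p_1(0)=3$ with respect to Brownian motion; for $I_2\cup I_3$, use time reversal.
\item For the $I_1\times I_3$ term, use the Markov property. The pair $X=B|_{[0,1/3]}$, $Y=B(1-\cdot)|_{[0,1/3]}$ has density $p_{1/3}(X(1/3)-Y(1/3))/p_1(0)\le 3$ with respect to two independent planar Brownian motions started at $0$.
\end{itemize}
Each of the six terms is therefore dominated, up to a factor $3$ and a convergent shift of centering constants, by one of two objects. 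The first is Le Gall's renormalized self-intersection local time of planar Brownian motion. The second, after reversing one piece, is the mutual intersection local time of two independent planar Brownian motions from a common point. Both have exponential moments of small order, and H\"older's inequality over the six terms finishes the argument.
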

\begin{proof}
\cite[Remark 5.8]{GLP}.
\end{proof}

Finally, we conclude with the following uniform integrability estimate which will be important for proving our Feynman-Kac formulas.
\begin{prop}\label{Prop: SILT UI}
Let $d=2$. For every $c>0$ there exists $\theta_c>0$ such that for every $t\in(0,\theta_c)$
\begin{align*}
    \sup_{x\in\R^2,\,\epsilon>0}\E[\exp(c(\beta_t^\epsilon(B_t^{x,x})-\E[\beta_t^\epsilon(B_t^{x,x})])]<\infty.
\end{align*}
\end{prop}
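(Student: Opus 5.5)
We will deduce Proposition \ref{Prop: SILT UI} from the exponential integrability of the \emph{limiting} renormalized SILT, Proposition \ref{Prop: 2D SILT Bound}, via a convexity argument. The translation invariance $\beta^\epsilon_t(B^{x,x}_t)=\beta^\epsilon_t(B^{0,0}_t)$ noted after \eqref{eqn: approx silt} removes the supremum over $x$, so only uniformity in $\epsilon$ needs attention.

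The first step is a Fourier/spectral representation of $\beta^\epsilon_t$. Write $p_\epsilon(z)=\int_{\R^2}e^{i\langle k,z\rangle}e^{-\epsilon|k|^2/2}\frac{dk}{(2\pi)^2}$. This gives
\[
\beta^\epsilon_t(B)=\int_{\R^2}e^{-\epsilon|k|^2/2}\Phi(k)\,\frac{dk}{(2\pi)^2},\qquad \Phi(k)=\int_{[0,t]^2_\leq}e^{i\langle k,B(u)-B(v)\rangle}dudv .
\]
Since $\Phi(k)+\overline{\Phi(k)}=|\int_0^te^{i\langle k,B(u)\rangle}du|^2$ up to the diagonal, $\beta^\epsilon_t$ equals $\frac12\int e^{-\epsilon|k|^2/2}|\hat\mu(k)|^2dk/(2\pi)^2$, where $\mu$ is the occupation measure. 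The point is that $\epsilon\mapsto\beta^\epsilon_t-\E\beta^\epsilon_t$ is then obtained from $\gamma_t$ by a Gaussian ``averaging'' in the following sense. Since $e^{-\epsilon|k|^2/2}\hat p_\delta(k)=\hat p_{\epsilon+\delta}(k)$, the process $\epsilon\mapsto\beta^\epsilon_t-\E\beta^\epsilon_t$ has the structure of a Le Gall-type renormalized functional.

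The second, standard route (which I would actually pursue) is Le Gall's triangle decomposition. Split $[0,t]^2_\le$ dyadically into squares $A^n_j=[(2j)t2^{-n},(2j+1)t2^{-n}]\times[(2j+1)t2^{-n},(2j+2)t2^{-n}]$, $n\ge1$, $0\le j<2^{n-1}$. Then
\[
\beta^\epsilon_t-\E\beta^\epsilon_t=\sum_{n,j}\big(\alpha^\epsilon_{A^n_j}-\E\alpha^\epsilon_{A^n_j}\big).
\]
Each $\alpha^\epsilon_{A^n_j}$ is a mutual intersection local time of two adjacent pieces of the bridge, and these pieces are, after conditioning on endpoints and using absolute continuity of bridge segments with respect to Brownian motion on subintervals of length at most $t/2$, comparable to independent Brownian paths. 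For each fixed $n$, the $2^{n-1}$ terms are conditionally almost independent and have scaling $\alpha_{A^n_j}\stackrel{d}{\approx}t2^{-n}\alpha_{[0,1]^2}$. By Hölder across levels with weights $\lambda_n\propto n^{-2}$, the quantity $\E e^{c(\beta^\epsilon_t-\E\beta^\epsilon_t)}$ is bounded by $\prod_n(\E e^{c\lambda_n^{-1}\sum_j(\alpha^\epsilon_{A^n_j}-\E\alpha^\epsilon_{A^n_j})})^{\lambda_n}$. Within a level, independence and $\E e^{X-\E X}\le 1+C\E X^2e^{|X|}$ give a factor $\exp(C2^n(t2^{-n}c n^2)^2)$ provided $t2^{-n}cn^2$ is small enough for the MILT exponential moment in Proposition \ref{Prop: MILT UI}, uniformly in $\epsilon$. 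This is where $t<\theta_c$ enters. The product converges since $\sum_n 2^{-n}n^4<\infty$.

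The main obstacle is justifying the near-independence and the uniform-in-$\epsilon$ second-moment/exponential bounds for the centered pieces, since bridge segments are not independent. I would handle this by restricting to $u,v\in[0,t/2]$ and $[t/2,t]$ separately, using Cauchy–Schwarz to split the SILT over the two halves plus the cross term $\alpha^\epsilon_{[0,t/2]\times[t/2,t]}$. On each half, the bridge law has a bounded density relative to Brownian motion, which lets me apply the Brownian Le Gall estimates. The cross term is controlled directly by Proposition \ref{Prop: MILT UI}, together with the monotone bound $\sup_\epsilon\E e^{c\alpha^\epsilon}\le\E e^{c\alpha}$ recorded there.
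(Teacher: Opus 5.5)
The paper does not prove this proposition itself; it cites \cite[Lemmas 5.6 and 5.7]{GLP}. Your self-contained route is the standard way to prove such a bound: split $[0,t]^2_{\leq}$ into the two half-triangles and the cross rectangle $[0,t/2]\times[t/2,t]$, compare each half of the loop with planar Brownian motion via the density bound $p_{t/2}(x-W(t/2))/p_t(0)\leq 2$, and run Le Gall's dyadic decomposition with H\"older weights $\lambda_n\propto n^{-2}$. The half-triangle part is sound in outline. The change of measure also shifts the centering, but this is harmless. Under Brownian motion $\E p_\epsilon(B(u)-B(v))=1/(2\pi(\epsilon+r))$, and under the bridge it is $1/(2\pi(\epsilon+r-r^2/t))$, with $r=v-u$. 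So the Brownian mean is the smaller one.

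The gap is the cross term $\alpha^\epsilon_{[0,t/2]\times[t/2,t]}(B^{x,x}_t)$, which you say is ``controlled directly by Proposition \ref{Prop: MILT UI}''. That proposition concerns two \emph{independent loops} $B^{x,x}_t$ and $\overline B^{y,y}_s$. The cross term is instead the intersection local time of the two halves of a \emph{single} loop. Conditionally on $z=B^{x,x}_t(t/2)$, these halves are independent bridges $x\to z$ and $z\to x$. Such bridges are not loops, and their density relative to Brownian motion is not bounded uniformly in $z$. Unconditionally, the pair $\big(B(t/2-\cdot),B(t/2+\cdot)\big)$ is singular with respect to two independent Brownian motions from the midpoint, because both must end at $x$. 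Since the two halves exhaust $[0,t]$, your bounded-density trick is unavailable for the rectangle as a whole, so an extra idea is needed.

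One fix is to cut the rectangle into three pieces:
\[
[0,t/2]\times[t/2,3t/4],\qquad [t/4,t/2]\times[3t/4,t],\qquad [0,t/4]\times[3t/4,t].
\]
Each piece only uses times in a set that omits an interval of length $t/4$. On such a set the loop has density at most $4$ with respect to free Brownian pieces. For example, for the corner piece, $B|_{[0,t/4]}$ and $B(t-\cdot)|_{[0,t/4]}$ have joint density $p_{t/2}(W_1(t/4)-W_2(t/4))/p_t(0)\leq 2$ with respect to two independent Brownian motions from $x$. Each piece then becomes a mutual intersection local time of independent Brownian motions with common or independently displaced starting points.

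Relatedly, the within-level step of Le Gall's scheme needs $\epsilon$-uniform exponential moments of $\alpha^\epsilon$ for independent \emph{Brownian motions}. That comes from Le Gall's estimate plus the Gaussian-shift/Jensen trick, not from Proposition \ref{Prop: MILT UI}, which again concerns loops.

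Finally, drop the opening ``convexity'' route. For the MILT, $\alpha^\epsilon(B,\overline B)$ is an average of $\alpha(B,\overline B+\sqrt{\epsilon}N)$ over an independent Gaussian shift $N$, and that is what makes Jensen work. No such representation exists for $\beta^\epsilon$ of a single path, so the $\epsilon$-uniform bound cannot be deduced from Proposition \ref{Prop: 2D SILT Bound} this way.
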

\begin{proof}
    See \cite[Lemmas 5.6 and 5.7]{GLP}.
\end{proof}

\subsection{Smoothed Trace Convergence (Proof of Proposition \ref{Proposition: Existence of 2d Trace})}\label{section: proof of existence of 2d trace}
We are now prepared to prove Proposition \ref{Proposition: Existence of 2d Trace}. 
Given that the $L^2$ space of square integrable random variables is complete, it suffices to prove that there exists $\vartheta>0$ such that the limit
\begin{align}
\label{eqn: L2 Limit for d=2}
\lim_{(\epsilon_1,\epsilon_2)\to0}\mathbb E\Big[\mathrm{Tr}[e^{-t(H_{\epsilon_1}+c_{\epsilon_1})}]\mathrm{Tr}[e^{-t(H_{\epsilon_2}+c_{\epsilon_2})}]\Big]\end{align}
exists and is finite for all $t\in(0,\vartheta)$.

By applying Tonelli's theorem to \eqref{eqn: d=2 epsilon raw Feynman-Kac},
for every $s,t,\epsilon_1,\epsilon_2>0$,
almost surely,
\begin{multline}
\label{eqn: d=2 epsilon raw Feynman-Kac for 2nd moment}
\mathrm{Tr}[e^{-t H_{\epsilon_1}}]
\mathrm{Tr}[e^{-s H_{\epsilon_2}}]=\frac1{4\pi^2st}\int_{(\mathbb R^2)^2}\mathbb E_B\bigg[\exp\bigg(-\int_0^tV\big(B^{x,x}_t(u)\big)+\xi_{\epsilon_1}\big(B^{x,x}_t(u)\big)du\\
-\int_0^sV\big(\overline B^{y,y}_s(u)\big)+\xi_{\epsilon_2}\big(\overline B^{y,y}_s(u)\big)du\bigg)\bigg]dxdy,
\end{multline}
where we assume that $\xi$, $B^{x,x}_t$, and $\overline{B^{y,y}_s}$
are independent.
Since $\xi_\epsilon$ is a Gaussian process with
mean zero and covariance kernel function $p_\epsilon$,
conditional on a fixed realization of $B^{x,x}_t$
and $\overline{B^{y,y}_s}$, the integral 
\[\int_0^t\xi_{\epsilon_1}\big(B^{x,x}_t(u)\big)du+\int_0^s\xi_{\epsilon_2}\big(\overline B^{y,y}_s(u)\big)du\]
is Gaussian with mean zero and variance
\[\mathbb E_\xi\left[\left(\int_0^t\xi_{\epsilon_1}\big(B^{x,x}_t(u)\big)du+\int_0^s\xi_{\epsilon_2}\big(\overline B^{y,y}_s(u)\big)du\right)^2\right],\]
where $\E_\xi$ denotes the expectation with respect to $\xi$ only,
conditional on $B^{x,x}_t$ and $\overline B^{y,y}_s$.
If we expand this variance using \eqref{eqn: mixed epsilon covariance} and Fubini's theorem (the latter of which we can apply since $\xi_\epsilon$ is almost surely continuous hence bounded on compacts),
then we get
\begin{multline*}
\int_0^t\int_0^tp_{\epsilon_1}\big(B_t^{x,x}(u)-B_t^{x,x}(v)\big)dudv
+\int_0^t\int_0^s2p_{(\epsilon_1+\epsilon_2)/2}\big(B_t^{x,x}(u)-\overline B_s^{y,y}(v)\big)dudv\\
+\int_0^s\int_0^sp_{\epsilon_2}\big(\overline B_s^{y,y}(u)-\overline B_s^{y,y}(v)\big)dudv
=2\beta_t^{\epsilon_1}(B^{x,x}_t)
+2\alpha_{t,s}^{(\epsilon_1+\epsilon_2)/2}(B^{x,x}_t,\overline B^{y,y}_s)
+2\beta_s^{\epsilon_2}(\overline B^{y,y}_s),
\end{multline*}
where equality follows from \eqref{eqn: approx silt}
and \eqref{eqn: approx milt}. Therefore,
a straightforward application of Tonelli's theorem
and a Gaussian moment generating
function calculation yields
\begin{multline}
\label{eqn: d=2 epsilon trace inner product formula}
\mathbb E\Big[\mathrm{Tr}[e^{-t(H_{\epsilon_1}+c_{\epsilon_1})}]\mathrm{Tr}[e^{-s(H_{\epsilon_2}+c_{\epsilon_2})}]\Big]\\
=\frac{1}{4\pi^2st}\int_{(\mathbb R^2)^2}\mathbb E\bigg[\exp\bigg(-\int_0^tV\big(B^{x,x}_t(u)\big)du-\int_0^sV\big(\overline B^{y,y}_s(u)\big)du\\
+\big(\beta_t^{\epsilon_1}(B^{x,x}_t)-t c_{\epsilon_1}
\big)
+\big(\beta_s^{\epsilon_2}(\overline B^{y,y}_s)-sc_{\epsilon_2}
\big)
+\alpha^{(\epsilon_1+\epsilon_2)/2}_{t,s}(B^{x,x}_t,\overline B^{y,y}_s)\bigg)\bigg]dxdy.
\end{multline}

If we combine Proposition \ref{prop: approx SILT} with
Lemma \ref{Lem: Approx Silt Expectation} alongside the facts that $c_\epsilon=\frac{1}{2\pi}\log(1/\epsilon)$ and $\int_0^t\log(r)-\log(t-r)dr=0$, then we get
\begin{align}\label{eqn: normalized approx SILT limit}
    \lim_{\epsilon\to0}\beta^\epsilon_t(B_t^{x,x})-tc_\epsilon = \gamma_t(B_t^{x,x})+\frac{t\log t}{2\pi}\qquad\text{in probability}.
\end{align}
Together with Proposition \ref{Prop: MILT convergence},
this implies (recall the notations in Definition \ref{def: abc shorthands})
\begin{multline}\label{eqn: approx product expectation limit}
    \lim_{(\epsilon_1,\epsilon_2)\to0}\mathbb E\Big[\mathrm{Tr}[e^{-t(H_{\epsilon_1}+c_{\epsilon_1})}]\mathrm{Tr}[e^{-s(H_{\epsilon_2}+c_{\epsilon_2})}]\Big] \\
    = \mathcal P^2(t,s)\int_{(\R^2)^2}\E[e^{\mathcal{A}_{t,s}(x,y)+\mathcal{B}^2_{t,s}(x,y)+\mathcal{C}_{t,s}(x,y)}]dxdy,
\end{multline}
provided we can take $\lim_{(\epsilon_1,\epsilon_2)\to0}$ inside the $dxdy$ integral and the expectation in \eqref{eqn: d=2 epsilon trace inner product formula}.

By dominated convergence, in order to first take the limit inside the integral,
it suffices to find an integrable function that dominates the expectation in \eqref{eqn: d=2 epsilon trace inner product formula}.
Toward this end, for $x,\overline{x}\in\R^d$ and $\kappa>0$, we have the elementary inequality 
\begin{align}\label{eqn: min triangle inequality}
|x+\overline{x}|^\kappa\geq|x+\overline{x}|^{\min{\{\kappa,1\}}}-1\geq|x|^{\min{\{\kappa,1\}}}-|\overline{x}|^{\min{\{\kappa,1\}}}-1.
\end{align} 
Using the lower bound \eqref{Equation: V Lower Bound}, this yields (henceforth writing $\min\{\kappa,1\}=\kappa\land 1$)
\begin{multline}\label{eqn: V uniform bound}
\int_0^tV(B_t^{x,x}(u))du\geq
\int_0^ta|x+B_t^{0,0}(u)|^\kappa du-bt\\
\geq
a|x|^{\kappa\land1}t-a\max_{u\in[0,t]}|B_t^{0,0}(u)|^{\kappa\land1}t-(a+b)t.
\end{multline}
Thus, up to the harmless constant $e^{(a+b)(s+t)}$, we can dominate the expectation in \eqref{eqn: d=2 epsilon trace inner product formula}
for all $\epsilon\in(0,1)$ by
\begin{multline}
\label{eqn: DCT in L2 Proof}
e^{-at|x|^{\kappa\land1}-as|y|^{\kappa\land1}}
\sup_{\epsilon_1,\epsilon_2\in(0,1),~x,y\in\mathbb R^2}\mathbb E\bigg[\exp\bigg(a\max_{u\in[0,t]}|B_t^{0,0}(u)|^{\kappa\land1}t+a\max_{u\in[0,s]}|\overline B_s^{0,0}(u)|^{\kappa\land1}s\\
+\big(\beta_t^{\epsilon_1}(B^{x,x}_t)-t c_{\epsilon_1}
\big)
+\big(\beta_s^{\epsilon_2}(\overline B^{y,y}_s)-s c_{\epsilon_2}
\big)
+\alpha^{(\epsilon_1+\epsilon_2)/2}_{t,s}(B^{x,x}_t,\overline B^{y,y}_s)\bigg)\bigg].
\end{multline}
By combining an application of H\"older's inequality with the fact that $\displaystyle\max_{u\in[0,t]}|B_t^{0,0}(u)|$ has finite exponential moments (see e.g. \cite[Remark 3.1]{BesselBridgeExponentialMoments}), the uniform bounds
in Propositions \ref{Prop: MILT UI} and \ref{Prop: SILT UI}, and the fact that
\[\sup_{\epsilon\in(0,1),~x\in\mathbb R^2}\mathbb E\big[\beta_t^{\epsilon}(B^{x,x}_t)\big]-tc_\epsilon<\infty\]
(by Lemma \ref{Lem: Approx Silt Expectation}),
we conclude that the supremum of expectations
in \eqref{eqn: DCT in L2 Proof} is finite, provided $s,t<\vartheta$ for an appropriate $\vartheta$.
Thus, \eqref{eqn: DCT in L2 Proof} is integrable for small $s,t$,
as desired.

Finally, in order to take the limit inside
the expectation, it suffices to show that
the random variables inside the expectation
in \eqref{eqn: d=2 epsilon trace inner product formula} are
uniformly integrable for every choice of $x,y\in\mathbb R^2$. For this purpose, it
suffices to find $p>1$ such that, for small enough $s,t>0$,
\begin{multline}
\sup_{\epsilon_1,\epsilon_2\in(0,1)}
\mathbb E\bigg[\exp\bigg(-p\int_0^tV\big(B^{x,x}_t(u)\big)du-p\int_0^sV\big(\overline B^{y,y}_s(u)\big)du\\
+p\big(\beta_t^{\epsilon_1}(B^{x,x}_t)-t c_{\epsilon_1}
\big)
+p\big(\beta_s^{\epsilon_2}(\overline B^{y,y}_s)-s c_{\epsilon_2}
\big)
+p\alpha^{(\epsilon_1+\epsilon_2)/2}_{t,s}(B^{x,x}_t,\overline B^{y,y}_s)\bigg)\bigg]<\infty.\end{multline}
Using \eqref{eqn: V uniform bound} and H\"older's inequality, this is proved in exactly the same way
as the finiteness of the supremum in \eqref{eqn: DCT in L2 Proof}. The proof of Proposition \ref{Proposition: Existence of 2d Trace} is thus complete.

\subsection{Proof of Proposition \ref{prop: 2D FK expectation and covariance}; $d=2$}\label{section: d=2 expectation and covariance proofs}
Let $d=2$. This proof follows nearly identically to the one in Section \ref{section: proof of existence of 2d trace}.

Recall that Proposition \ref{Proposition: Existence of 2d Trace} gives us the $L^2$ convergence of the smoothed trace for every $s,t\in(0,\vartheta)$ for some $\vartheta>0$. An immediate consequence of this is that for every $s,t\in(0,\vartheta)$,
\begin{align}
    \E[\mathrm{Tr}[e^{-tH}]]&=\lim_{\epsilon\to0}\E[\mathrm{Tr}[e^{-t(H_\epsilon+c_\epsilon)}]] \\ &\text{and}\notag \\ \mathrm{Cov}[\mathrm{Tr}[e^{-tH}],\mathrm{Tr}[e^{-sH}]] &= \lim_{\epsilon\to0}  \mathrm{Cov}[\mathrm{Tr}[e^{-t(H_\epsilon+c_\epsilon)}],\mathrm{Tr}[e^{-s(H_\epsilon+c_\epsilon)}]].
\end{align}
Therefore, the desired result follows if we are able to show 
\begin{align}
\label{eqn: trace formula limit 1}
     \lim_{\epsilon\to0}\E[\mathrm{Tr}[e^{-t(H_\epsilon+c_\epsilon)}]]&= \mathcal{P}^2(t,t)^{1/2}\int_{\R^2}\E[e^{-\int_0^tV(B_t^{x,x}(u))du+\gamma_t(B_t^{x,x})}]dx
     \end{align}
\begin{multline}     \label{eqn: trace formula limit 2}
     \lim_{\epsilon\to0}  \mathrm{Cov}[\mathrm{Tr}[e^{-t(H_\epsilon+c_\epsilon)}],\mathrm{Tr}[e^{-s(H_\epsilon+c_\epsilon)}]]\\= \mathcal{P}^2(t,s)\int_{(\R^2)^2}\E\left[(e^{\mathcal{A}_{t,s}(x,y)+\mathcal B^2_{t,s}(x,y)})(e^{\mathcal{C}_{t,s}(x,y)}-1)\right]dxdy.
     \end{multline}
In order to prove these limits, we recall the Feynman-Kac formula for the smoothed exponential trace \eqref{eqn: d=2 epsilon raw Feynman-Kac}.
By applying the same Gaussian moment generating function that led to \eqref{eqn: d=2 epsilon trace inner product formula} (only easier since there is now one trace), we get
\begin{align}\label{eqn: approx expectation formula}
     \E\big[\mathrm{Tr}[e^{-t(H_\epsilon+c_\epsilon)}]\big] = \frac{1}{2\pi t}\int_{\R^2}\E\big[e^{-\int_0^tV(B^{x,x}_t(s))ds+\beta_t^\epsilon(B_t^{x,x})-tc_\epsilon}\big]dx.
\end{align}
With this in hand, a simpler version of the argument that
led to \eqref{eqn: approx product expectation limit}
allows to take $\epsilon\to0$ in \eqref{eqn: approx expectation formula},
and yields the limit \eqref{eqn: trace formula limit 1}. In fact, by an application of Tonelli's theorem,
we also have that
\begin{align}
\label{eqn: trace formula limit 1 squared}
 \lim_{\epsilon\to0}\E[\mathrm{Tr}[e^{-t(H_\epsilon+c_\epsilon)}]]\E[\mathrm{Tr}[e^{-s(H_\epsilon+c_\epsilon)}]]
=\mathcal{P}^2(t,s)\int_{(\R^2)^2}\E\left[e^{\mathcal{A}_{t,s}(x,y)+\mathcal B^2_{t,s}(x,y)}\right]dxdy.
\end{align}
Our desired covariance formula \eqref{eqn: trace formula limit 2} then follows immediately from
\eqref{eqn: trace formula limit 1 squared}
and
\eqref{eqn: approx product expectation limit},
and the fact that $\mathrm{Cov}[X,Y]=\mathbb E[XY]-\mathbb E[X]\mathbb E[Y]$.

\section{Proof of Main Results}

\subsection{Outline}

Recall the covariance formula \eqref{eqn: FK Covariance}. We observe that by Taylor expanding we can approximate $e^{\mathcal C_{t,s}(x,y)}-1\approx\mathcal C_{t,s}(x,y)$. Furthermore,  we intuitively expect $e^{\mathcal B^d_{t,s}(x,y)}$ to tend to $1$ independently of $x,y$ as $(s,t)\to0$ since there can of course be no self-intersection if $s=t=0$. Finally, since the Brownian Bridge is bounded on any finite time interval we expect $\int_{(\R^d)^2}e^{\mathcal A_{t,s}(x,y)}dxdy$ to be bounded above and below by $\int_{(\R^d)^2}e^{-t|x|^\kappa-s|y|^\kappa}dxdy$ as $(s,t)\to0$ (up to some harmless constants). 

Our arguments will formalize and quantify these heuristics. Putting this aside for now, however, we see that the main estimate we will need is contained in the following technical lemma,
which controls the terms we expect to be dominant:

\begin{lemma}\label{lem: t+s lower bound}
Let $d\in\{1,2\}$, $C,\kappa>0$, and $0<\eta\leq1$. Then as $(s,t)\to0$, we have
\begin{enumerate}[a.]
\item $\displaystyle\int_{\R^d}\int_{\R^d}e^{-C(t^\eta|x|^\kappa+s^\eta|y|^\kappa)}\E[\alpha_{t,s}(B_t^{x,x},\overline{B}_s^{y,y})]dxdy\asymp ts(t+s)^{-d\eta/\kappa}.$
\item $\displaystyle\E\left[\left(\int_{\R^d}\int_{\R^d}e^{-C(t^\eta|x|^\kappa+s^\eta|y|^\kappa)}\alpha_{t,s}(B_t^{x,x},\overline{B}_s^{y,y})dxdy\right)^2\right]\asymp t^2s^2(t+s)^{-2d\eta/\kappa}.$
\end{enumerate}
\end{lemma}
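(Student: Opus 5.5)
The plan is to express everything through the occupation-density representation of the mutual intersection local time. For a.e. realization, $\alpha_{t,s}(B_t^{x,x},\overline B_s^{y,y})=\int_0^t\int_0^s\delta(B_t^{x,x}(u)-\overline B_s^{y,y}(v))\,dv\,du$. In $d=2$ this is made rigorous via the approximations $\alpha^\epsilon$ and Proposition \ref{Prop: MILT convergence}, together with uniform integrability from Proposition \ref{Prop: MILT UI} to pass expectations and second moments to the limit.

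\textbf{Step 1: the $dx\,dy$ integral against a single pair of times.} By independence of the bridges,
\[\E[p_\epsilon(x-y+\sqrt tB(u/t)-\sqrt s\overline B(v/s))]=p_{\epsilon+\sigma^2_t(u)+\sigma^2_s(v)}(x-y),\]
where $\sigma_t^2(u)=u(t-u)/t\le t$ is the bridge variance. Substituting $z=x-y$, the $y$-integral becomes
\[\int e^{-C(t^\eta|y+z|^\kappa+s^\eta|y|^\kappa)}\,dy .\]
Letting $\epsilon\to0$, part (a) reduces to
\[\int_0^t\int_0^s\int_{(\R^d)^2}e^{-C(t^\eta|x|^\kappa+s^\eta|y|^\kappa)}\,p_{\sigma_t^2(u)+\sigma_s^2(v)}(x-y)\,dx\,dy\,du\,dv .\]

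\textbf{Step 2: the key scaling estimate.} Uniformly in a variance parameter $\sigma^2\le t+s$, I would show
\[I(\sigma^2):=\int\int e^{-C(t^\eta|x|^\kappa+s^\eta|y|^\kappa)}p_{\sigma^2}(x-y)\,dx\,dy\asymp(t+s)^{-d\eta/\kappa}.\]
Write $w=e^{-C(t^\eta|x|^\kappa+s^\eta|y|^\kappa)}$ for the weight.

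\emph{Upper bound.} Assume WLOG $t\ge s$. Bound $e^{-Cs^\eta|y|^\kappa}\le1$ and integrate out $y$ using $\int p\,dy=1$. This leaves $\int e^{-Ct^\eta|x|^\kappa}dx\asymp t^{-d\eta/\kappa}\asymp(t+s)^{-d\eta/\kappa}$.

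\emph{Lower bound.} Restrict to $|x|,|y|\le(t+s)^{-\eta/\kappa}$. On this box the weight is bounded below by $e^{-2C}$. The remaining Gaussian integral over the box is $\gtrsim$ its volume, because $\sigma^2\le t+s$ is much smaller than the box side squared, namely $(t+s)^{-2\eta/\kappa}$ (here $\eta\le1$ and $(s,t)\to0$). Hence at least a constant fraction of the Gaussian mass stays inside the box.

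Integrating $du\,dv$ over $[0,t]\times[0,s]$ then gives $ts(t+s)^{-d\eta/\kappa}$, which proves (a).

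\textbf{Step 3: the second moment (b).} The lower bound is immediate from Jensen, i.e. $\E[X^2]\ge\E[X]^2$, combined with (a).

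For the upper bound, expand the square as a quadruple integral over $x,y,x',y'$ and times $u,v,u',v'$. Inside is the expectation
\[\E\bigl[\delta(B^{x,x}_t(u)-\overline B^{y,y}_s(v))\,\delta(B^{x',x'}_t(u')-\overline B^{y',y'}_s(v'))\bigr].\]
Because of the coupling of Definition \ref{def: BB Couplings}, all four bridges are built from the same two processes $B_1^{0,0},\overline B_1^{0,0}$. Thus the pair
\[\bigl(\sqrt tB(u/t)-\sqrt s\overline B(v/s),\ \sqrt tB(u'/t)-\sqrt s\overline B(v'/s)\bigr)\]
is a centered Gaussian vector $(G,G')$ with covariance matrix $\Sigma$, and the expectation equals the density of $(G,G')$ evaluated at $(y-x,\,y'-x')$.

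Now integrate in $x,y,x',y'$. Bound the weight in the primed variables by $e^{-C(t^\eta|x'|^\kappa+s^\eta|y'|^\kappa)}$ and change variables to $z'=y'-x'$. Conditioning on $G$, the $z'$-integral of the conditional density of $G'$ equals $1$. Integrating over $x'$ with the remaining weight then costs at most $(t+s)^{-d\eta/\kappa}$, exactly as in Step 2. What remains is precisely the unsquared integral of Step 2, with the density of $G$ playing the role of $p_{\sigma^2}$. This yields $(ts)^2(t+s)^{-2d\eta/\kappa}$ after the four time integrals.

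\textbf{Main obstacle.} The delicate point is justifying this Fubini/disintegration step in $d=2$. There the joint density of $(G,G')$ is singular near the diagonal $u=u'$, $v=v'$, so the bound should be run at the level of $\alpha^\epsilon$ using Gaussian densities with variance increased by $\epsilon$. The conditioning argument only uses that the marginal of $G'$ given $G$ integrates to one, so the bound is uniform in $\epsilon$; Fatou's lemma then passes it to the limit.
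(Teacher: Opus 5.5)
Your proof is correct, and it takes a genuinely different route from the paper's, mainly in part b.

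\textbf{Part a.} You use the same Gaussian formula for $\E[\alpha_{t,s}]$ as the paper (Proposition \ref{Prop: approx Identity Limit}). Your lower bound restricts to $|x|,|y|\le (t+s)^{-\eta/\kappa}$ and uses Gaussian concentration; this is the paper's unit-ball argument written before the rescaling $x\mapsto T^{-\eta/\kappa}x$. Your upper bound differs from the paper's. The paper uses \eqref{eqn: min bound}, and that is where it invokes $\eta\le1$. You instead drop the weight attached to the smaller time and integrate the Gaussian out, which works for every $\eta>0$.

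\textbf{Part b.} The paper derives the joint Gaussian representation of Proposition \ref{Prop: Product Expectation is Gaussian pdf}. This costs the covariance matrix $K_0$, the formula for $\det A$, Minkowski's determinant inequality, and the integrability of $\det(A)^{-d/4}$. Both bounds are then run on that representation, with the lower bound coming from a further limit argument. You bypass all of this:
\begin{itemize}
\item the lower bound is Jensen combined with part a;
\item the upper bound only uses that one copy of the Gaussian kernel integrates to one, so the near-diagonal degeneracy of $K$ never has to be controlled.
\end{itemize}
In fact your upper bound is pathwise. One has $\int_{\R^d}\alpha^\epsilon_{t,s}(B^{x,x}_t,\overline B^{y,y}_s)\,dy=ts$ identically. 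Hence for $t\ge s$ the integral inside the square is surely at most $ts\int e^{-Ct^\eta|x|^\kappa}dx\lesssim ts(t+s)^{-d\eta/\kappa}$, and no conditioning is needed.

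\textbf{Comparison.} The paper's route yields an exact integral formula for the mixed second moment, which would be the natural starting point for sharper, constant-level asymptotics. For the $\asymp$ statement, your route is shorter and needs no restriction on $\eta$.

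\textbf{Two small points to tidy.}
\begin{itemize}
\item In Step 3 the primed weight should be bounded by $e^{-Ct^\eta|x'|^\kappa}$ (for $t\ge s$), not by itself.
\item The Fatou step needs $X^\epsilon\to X$ in probability. This follows from $\E|\alpha^\epsilon-\alpha|\to0$ for each $(x,y)$, using uniform integrability from Proposition \ref{Prop: MILT UI}. You also need a bound on $\E[\alpha^\epsilon]$ uniform in $(x,y,\epsilon)$, e.g. $\int_0^t\int_0^s(2\pi z)^{-d/2}\,du\,dv<\infty$, to apply dominated convergence in $(x,y)$. The paper passes over this point as well.
\end{itemize}
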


\begin{proof}
See Section \ref{Proof of lemma} for proof.
\end{proof}
\subsection{Upper Bound}\label{upper bound proof}
We begin by proving \eqref{eq: Main Upper Bound}. By using the formula \eqref{eqn: FK Covariance}, along with the inequality $e^x-1\leq xe^x$ for $x\geq0$ and the fact that $\mathcal{P}^d(t,s)\lesssim(st)^{-d/2}$
(since $e^{t\log t/2\pi+s\log s/2\pi}=1+o(1)$ as $(s,t)\to0$),
we obtain that
\begin{align}
\notag C(s,t) &= 
\int_{(\R^d)^2}\mathcal{P}^d(t,s)\E[\mathrm{e}^{\mathcal{A}_{t,s}(x,y)+\mathcal{B}^d_{t,s}(x,y)}(e^{\mathcal{C}_{t,s}(x,y)}-1)]dxdy\\&\lesssim 
\frac{1}{(st)^{d/2}}\int_{(\R^d)^2}\E[\mathrm{e}^{\mathcal{A}_{t,s}(x,y)+\mathcal{B}^d_{t,s}(x,y)}\mathcal{C}_{t,s}(x,y)e^{\mathcal C_{t,s}(x,y)}]dxdy.
\label{eqn: Upper Bound First Estimate}
\end{align}
Now we fix an arbitrary $c>0$ and write
\begin{align}
\notag \frac{1}{(st)^{d/2}}&\int_{(\R^d)^2}\E[\mathrm{e}^{\mathcal{A}_{t,s}(x,y)+\mathcal{B}^d_{t,s}(x,y)}\mathcal{C}_{t,s}(x,y)e^{\mathcal C_{t,s}(x,y)}]dxdy\\&= \label{eqn: Upper Bound Fast Decay}\frac{1}{(st)^{d/2}}\left(\int_{(\R^d)^2}\E[\mathrm{e}^{\mathcal{A}_{t,s}(x,y)+\mathcal{B}^d_{t,s}(x,y)}\mathcal{C}_{t,s}(x,y)e^{\mathcal C_{t,s}(x,y)}\chi_{\{\mathcal{C}_{t,s}(x,y)>c\}}]dxdy\right.
\\&+\left.\label{eqn: Upper Bound Main Term}\int_{(\R^d)^2}\E[\mathrm{e}^{\mathcal{A}_{t,s}(x,y)+\mathcal{B}^d_{t,s}(x,y)}\mathcal{C}_{t,s}(x,y)e^{\mathcal C_{t,s}(x,y)}\chi_{\{\mathcal{C}_{t,s}(x,y)\leq c\}}]dxdy\right)
\end{align}

Beginning with \eqref{eqn: Upper Bound Main Term}, using \eqref{Equation: V Lower Bound} to bound $\mathcal A_{t,s}(x,y)$, we have 
\begin{align*}
&\eqref{eqn: Upper Bound Main Term}\leq\notag\frac1{(st)^{d/2}}\int_{(\R^d)^2}\E[\mathrm{e}^{\mathcal{A}_{t,s}(x,y)+\mathcal{B}^d_{t,s}(x,y)}\mathcal{C}_{t,s}(x,y)e^{\mathcal C_{t,s}(x,y)}\chi_{\{\mathcal{C}_{t,s}(x,y)\}\leq c\}}]dxdy \\
&\leq \frac{e^c}{(st)^{d/2}}\int_{(\R^d)^2}\E[\mathrm{e}^{\mathcal{A}_{t,s}(x,y)+\mathcal{B}^d_{t,s}(x,y)}\mathcal{C}_{t,s}(x,y)]dxdy \notag\\
&\leq
\frac{e^{c}}{(st)^{d/2}}\int_{(\R^d)^2}\E[\mathrm{e}^{-\int_0^ta|x+B_t^{0,0}(u)|^\kappa-b\: du-\int_0^sa|y+B_s^{0,0}(u)|^\kappa-b\: du}e^{\mathcal{B}^d_{t,s}(x,y)}\mathcal{C}_{t,s}(x,y)]dxdy \notag\\
&=
\frac{e^{c+b(s+t)}}{(st)^{d/2}}\int_{(\R^d)^2}\E[\mathrm{e}^{-\frac{a}{t}\int_0^t|t^{1/\kappa}x+t^{1/\kappa}B_t^{0,0}(u)|^\kappa du-\frac{a}{s}\int_0^s|s^{1/\kappa}y+s^{1/\kappa}B_s^{0,0}(u)|^\kappa du}e^{\mathcal{B}^d_{t,s}(x,y)}\mathcal{C}_{t,s}(x,y)]dxdy.
\end{align*}
Applying \eqref{eqn: min triangle inequality} to $|t^{1/\kappa}x+t^{1/\kappa}B_t^{0,0}(u)|^\kappa$ and $|s^{1/\kappa}y+s^{1/\kappa}B_s^{0,0}(u)|^\kappa$ then gives us

\begin{multline*}
\eqref{eqn: Upper Bound Main Term}\leq \frac{e^{C+b(s+t)}}{(st)^{d/2}}\\
\times\int_{(\R^d)^2}\E[\mathrm{e}^{-\frac{a}{t}\int_0^t|t^{1/\kappa}x|^{\kappa\wedge1}-|t^{1/\kappa}B_t^{0,0}(u)|^{\kappa\wedge1}du-\frac{a}{s}\int_0^s|s^{1/\kappa}y|^{\kappa\wedge1}-|s^{1/\kappa}B_s^{0,0}(u)|^{\kappa\wedge 1}du}e^{\mathcal{B}^d_{t,s}(x,y)}\mathcal{C}_{t,s}(x,y)].
\end{multline*}
where $C=c+2a$.
If we denote
\begin{align}
\label{eqn: Brownian Bridge Suprema}
M_t:=\sup_{0\leq u\leq t}|B_t^{0,0}(u)|
\qquad\text{and}\qquad
\overline M_s:=\sup_{0\leq u\leq s}|\overline B_s^{0,0}(u)|
\end{align}
and use the translation invariance of the self-intersection local time, whereby $\mathcal{B}^{d}_{t,s}(x,y)=\mathcal{B}^{d}_{t,s}(0,0)$, then this becomes
\begin{align*}
\eqref{eqn: Upper Bound Main Term} \leq \frac{e^{C+b(s+t)}}{(st)^{d/2}}\E[e^{a(t^{1/\kappa}M_t)^{\kappa\wedge1}+a(s^{1/\kappa}\overline M_s)^{\kappa\wedge1}}e^{{\mathcal{B}^{d}_{t,s}(0,0)}}\int_{(\R^d)^2}e^{-a(|t^{1/\kappa}x|^{\kappa\wedge1}+|s^{1/\kappa}y|^{\kappa\wedge1})}\mathcal{C}_{t,s}(x,y)dxdy].
\end{align*}
Finally, apply H\"older's inequality in order to get
\begin{align}\label{eqn: Upper Bound Main Term Post Holder}
\eqref{eqn: Upper Bound Main Term}\leq \frac{e^{C+b(s+t)}}{(st)^{d/2}}\E&[e^{4a(t^{1/\kappa}M_t)^{\kappa\wedge1}+4a(s^{1/\kappa}\overline M_s)^{\kappa\wedge1}}]^{1/4}\E[e^{4\mathcal{B}^{d}_{t,s}(0,0)}]^{1/4}\notag\\&\cdot\E\left[\left(\int_{(\R^d)^2}e^{-a(|t^{1/\kappa}x|^{\kappa\wedge1}+|s^{1/\kappa}y|^{\kappa\wedge1})}\mathcal{C}_{t,s}(x,y)dxdy\right)^2\right]^{1/2}
\end{align}
Now we consider each of the three expectations in \eqref{eqn: Upper Bound Main Term Post Holder} separately.

First, from \cite[Remark 3.1]{BesselBridgeExponentialMoments}, we have that $M_t$ has finite exponential moments of every order. Therefore, it follows by Brownian scaling that for all $\theta>0$, if $\max\{s,t\}\leq\theta$ then 
\begin{align}\label{eqn: Upper Bound Final Product First Term}
\E[e^{4a(t^{1/\kappa}M_t)^{\kappa\wedge1}+4a(s^{1/\kappa}\overline M_s)^{\kappa\wedge1}}]^{1/4}\leq C_1<\infty
\end{align}
where $C_1$ depends only on $\theta$.
Then applying Propositions \ref{Prop: 1D SILT Bound} and \ref{Prop: 2D SILT Bound}, we get that for $d\in\{1,2\}$ and $\theta>0$ small enough to satisfy Proposition \ref{Prop: 2D SILT Bound}, if $\max\{s,t\}\leq\theta$, then
\begin{align}\label{eqn: Upper Bound Final Product Second Term}
\E[e^{4\mathcal{B}^{d}_{t,s}(0,0)}]^{1/4}\leq C_2<\infty
\end{align}
with $C_2$ again only depending on $\theta$.
Finally, applying Lemma \ref{lem: t+s lower bound} with $\kappa$ in the lemma replaced by $\kappa\wedge1$ and $\eta = \frac{\kappa\wedge1}{\kappa}$ gives us
\begin{align}\label{eqn: Upper Bound Final Product Third Term}
\E\left[\left(\int_{(\R^d)^2}e^{-a|t^{1/\kappa}x|^{\kappa\wedge1}-a|s^{1/\kappa}y|^{\kappa\wedge1}}\mathcal{C}_{t,s}(x,y)dxdy\right)^2\right]^{1/2}\asymp ts(t+s)^{-d/\kappa}
\end{align}
Given that $e^{C+b(s+t)}\asymp1$ and $(s+t)^{-d/\kappa}\asymp\max\{s,t\}^{-d/\kappa}$ as $(s,t)\to0$, we therefore conclude from \eqref{eqn: Upper Bound Main Term Post Holder} that
\begin{align}
\label{eqn: Upper Bound Main Term Proof}
\eqref{eqn: Upper Bound Main Term}\lesssim\min\{s,t\}^{1-d/2}\max\{s,t\}^{1-d/2-d/\kappa}
\qquad\text{as }(s,t)\to0.
\end{align}
It now only remains to show that
\eqref{eqn: Upper Bound Fast Decay}
is of smaller order than this as $(s,t)\to0$.

Toward this end, by H\"older's inequality,
\begin{multline}\label{eqn: second line}
\eqref{eqn: Upper Bound Fast Decay}\leq\frac{1}{(st)^{d/2}}\sup_{x,y\in\mathbb R^d}\P[\mathcal{C}_{t,s}(x,y)>c]^{1/2}\\
\times\sup_{x,y\in\mathbb R^d}\left(\E[e^{8\mathcal{B}^d_{t,s}(x,y)}]^{1/8}\E[\mathcal{C}_{t,s}(x,y)^8]^{1/8}\E[e^{8\mathcal C_{t,s}(x,y)}]^{1/8}\right)\int_{(\R^d)^2}\E[\mathrm{e}^{8\mathcal{A}_{t,s}(x,y)}]^{1/8}dxdy.
\end{multline}
In order to control this quantity, we introduce a H\"older-type inequality for the MILT that will be used repeatedly in the upcoming arguments:

For $d=2$ and $s,t>0$, we claim that for every integer $p\geq1$,
\begin{align}\label{eqn: MILT Holder}
    \E\big[\alpha_{t,s}(B_t^{x,x},\overline B_s^{y,y})^p\big]\leq\E\big[\alpha_{t,t}(B_t^{x,x},\overline B_t^{x,x})^p\big]^{1/2}\E\big[\alpha_{s,s}(B_s^{y,y},\overline B_s^{y,y})^p\big]^{1/2}
\end{align}
We will leave the proof of this inequality to the end of this section.

Taking this for granted for now, we begin by showing that the supremum in the second line in \eqref{eqn: second line} is bounded as $(s,t)\to0$. First, by translation invariance of the SILT and the Brownian scaling coupling in Definition \ref{def: BB Couplings}, we have
\begin{align}
    \notag{\mathcal{B}^d_{t,s}(x,y)}=\mathcal{B}^d_{t,s}(0,0)&= 
\begin{cases}\frac{1}{2}(\beta_{t}(B_t^{0,0})+\beta_{s}(\overline{B}_s^{0,0})) & d=1\\
\gamma_{t}(B_t^{0,0})+\gamma_{s}(\overline{B}_s^{0,0}) & d=2
\end{cases}\\&\overset{d}{=}
\label{eqn: Bd scaling}
\begin{cases}\frac{1}{2}(t^{3/2}\beta_{1}(B_1^{0,0})+s^{3/2}\beta_{1}(\overline{B}_1^{0,0})) & d=1\\
t\gamma_{1}(B_1^{0,0})+s\gamma_{1}(\overline{B}_1^{0,0}) & d=2
\end{cases}
\end{align}
This immediately implies that $\displaystyle\sup_{x,y\in\mathbb R^d}\E[e^{8\mathcal{B}^d_{t,s}(x,y)}]^{1/8}=\E[e^{8\mathcal{B}^d_{t,s}(0,0)}]^{1/8}$ and 
\[\displaystyle\lim_{(s,t)\to0}e^{8\mathcal{B}^d_{t,s}(0,0)}=1\qquad\text{almost surely}\] pointwise in $x$ and $y$. Furthermore, $e^{8\mathcal{B}^d_{t,s}(0,0)}$ is uniformly integrable in $s,t$ that are sufficiently small by a routine computation using the de-la-Vall\'ee-Poussin criterion for uniform integrability combined with Propositions \ref{Prop: 1D SILT Bound} and \ref{Prop: 2D SILT Bound}. Thus, we can apply the Vitali convergence theorem yielding 
\begin{align}\label{eqn: exp SILT limit}
\lim_{(s,t)\to0}\E[e^{8\mathcal{B}^d_{t,s}(0,0)}]^{1/8} = 1.
\end{align}

Now for $d=2$, using \eqref{eqn: MILT Holder} and the MILT scaling from \cite[Lemma 5.14]{GLP} gives us
\begin{align}\label{eqn: 2D MILT Moment scaling}
    \notag \E\big[\alpha_{t,s}(B_{t}^{x,x},\overline B_s^{y,y})^8\big]
    &\leq\E\big[(t\alpha_{1,1}(B_1^{0,0},\overline B_1^{0,0}))^8\big]^{1/2}\E\big[(s\alpha_{1,1}(B_1^{0,0},\overline B_1^{0,0}))^8\big]^{1/2}
    \notag
    \\&=(st)^{4}\E\big[\alpha_{1,1}(B_1^{0,0},\overline B_1^{0,0})^8\big]^{1/2}\E\big[\alpha_{1,1}(B_1^{0,0},\overline B_1^{0,0})^8\big]^{1/2}.
\end{align}
Both expectations are finite by \cite[Lemma 5.14]{GLP} and so
\begin{align}\label{eqn: 2d limit milt moment}
    \lim_{(s,t)\to0} \sup_{x,y\in\mathbb R^d}\E\big[\alpha_{t,s}(B_{t}^{x,x},\overline B_s^{y,y})^8\big] = 0
\end{align} for $d=2$.

When $d=1$, using \eqref{eqn: 1D MILT quadratic form inequality}, SILT translation invariance, and Brownian scaling gives us
\begin{align}\label{eqn: 1D MILT to SILT Bound}
    \E\big[\alpha_{t,s}(B_{t}^{x,x},\overline B_s^{y,y})^8\big]\leq\E\left[\left(\frac{\beta_{t}(B_t^{x,x})+\beta_s(\overline{B}_s^{y,y})}2\right)^8\right]
    =\E\left[\left(\frac{t^{3/2}\beta_{1}(B_1^{0,0})+s^{3/2}\beta_1(\overline{B}_1^{0,0})}2\right)^8\right]
\end{align}
Thus, by another application of dominated convergence we get
\[\lim_{(s,t)\to0} \sup_{x,y\in\mathbb R}\E\big[\alpha_{t,s}(B_{t}^{x,x},\overline B_s^{y,y})^8\big] = 0.\]

Combining \eqref{eqn: 2d limit milt moment}
with \eqref{eqn: 1D MILT to SILT Bound}, we are able to conclude that for $d=1,2$,
\begin{align}\label{eqn: MILT Moment limit}
    \lim_{(s,t)\to0}\sup_{x,y\in\mathbb R^d}\E\big[\mathcal{C}_{t,s}(x,y)^8\big] = 0.
\end{align}

Finally, using the definition of $\mathcal{C}_{t,s}(x,y)$ gives us
\begin{align}
    \E[e^{8\mathcal C_{t,s}(x,y)}] = \sum_{m=0}^\infty\frac{8^m\E\big[\alpha_{t,s}(B_t^{x,x},\overline{B}_s^{y,y})^m\big]}{m!}
\end{align}
When $d=2$, we can bound $\E\big[\alpha_{t,s}(B_t^{x,x},\overline{B}_s^{y,y})^m\big]$ uniformly in $x$ and $y$ using the inequality \eqref{eqn: MILT Holder} and Brownian scaling identically to what we did in order to get \eqref{eqn: 2D MILT Moment scaling}. 
When $d=1$, we argue similarly using \eqref{eqn: 1D MILT quadratic form inequality} and Brownian scaling to bound $\E\big[\alpha_{t,s}(B_t^{x,x},\overline{B}_s^{y,y})^m\big]$ uniformly in $x$ and $y$ in the same manner as we did to get \eqref{eqn: 1D MILT to SILT Bound}. 
In addition, we have that $\E[e^{8\mathcal C_{t,s}(x,y)}]$ is uniformly integrable in $s,t$ sufficiently small by Proposition \ref{Prop: MILT UI} and the de-la-Vall\'ee-Poussin criterion for uniform integrability.
Thus we are able to conclude that 
\begin{align}\label{eqn: exp MILT limit}
\lim_{(s,t)\to0}\sup_{x,y\in\mathbb R^d}\E[e^{8\mathcal C_{t,s}(x,y)}] = 1.
\end{align}

In summary, we can combine \eqref{eqn: exp SILT limit}, \eqref{eqn: MILT Moment limit}, and \eqref{eqn: exp MILT limit} to conclude that
\begin{align}\label{eqn: sup part limit}
    \lim_{(s,t)\to0}\sup_{x,y\in\mathbb R^d}\left(\E[e^{8\mathcal{B}^d_{t,s}(x,y)}]^{1/8}\E[\mathcal{C}_{t,s}(x,y)^8]^{1/8}\E[e^{8\mathcal C_{t,s}(x,y)}]^{1/8}\right)=0
\end{align}

We finish the analysis of the second line, \eqref{eqn: second line}, by using \eqref{eqn: V uniform bound} to get
\begin{multline}\label{eqn: exp MILT Moment Rate}
   \E[\mathrm{e}^{8\mathcal{A}_{t,s}(x,y)}]^{1/8}\leq e^{(a+b)(s+t)}e^{-at|x|^{\kappa\land1}-as|y|^{\kappa\land1}}\\\times\mathbb E\bigg[\exp\bigg(8a\max_{u\in[0,t]}|B_t^{0,0}(u)|^{\kappa\land1}t+8a\max_{u\in[0,s]}|\overline B_s^{0,0}(u)|^{\kappa\land1}s\bigg)\bigg]^{1/8}.
\end{multline}
Since Brownian bridge suprema have finite exponential moments (\cite[Remark 3.1]{BesselBridgeExponentialMoments}),
we conclude that 
\begin{align*}
    \int_{(\mathbb R^d)^2}\E[\mathrm{e}^{8\mathcal{A}_{t,s}(x,y)}]^{1/8}dxdy\lesssim\int_{(\mathbb R^d)^2}e^{-t|x|^{\kappa\land1}-s|y|^{\kappa\land1}}dxdy
    \asymp(ts)^{-d/\kappa\land1}
    \quad\text{as }(s,t)\to0.
\end{align*}
Thus, with this in hand, we may summarize our
analysis of the first line in \eqref{eqn: Upper Bound Fast Decay} as
\begin{align}\label{eqn: second line part 2}
\eqref{eqn: Upper Bound Fast Decay}\lesssim
(ts)^{-d/2-d/\kappa\land1}
\sup_{x,y\in\mathbb R^d}\P[\mathcal{C}_{t,s}(x,y)>c]^{1/2}
\qquad\text{as }(s,t)\to0.
\end{align}
In particular, in order to prove that
\eqref{eqn: Upper Bound Fast Decay} is negligible
compared to \eqref{eqn: Upper Bound Main Term},
it suffices to show that the supremum of the
probabilities in \eqref{eqn: second line part 2} decays faster than any polynomial in $st$.

Toward this end, we note that by definition of $\mathcal C_{t,s}(x,y)$ and Markov's inequality,
\begin{align}\label{eqn: MILT Markov}
    \P\big[\mathcal{C}_{t,s}(x,y)>c] = \P\big[\alpha_{t,s}(B_t^{x,x},\overline B_s^{y,y})>c]\leq\frac{\E\big[\alpha_{t,s}(B_t^{x,x},\overline B_s^{y,y})^p\big]}{c^p}
\end{align}
for every integer $p\geq1$.
In the case $d=2$, we can use \eqref{eqn: MILT Holder} and the scaling we applied in the
first line of \eqref{eqn: 2D MILT Moment scaling}
(with a power of $p$ instead of $8$) to
get
\[\frac{\E\big[\alpha_{t,s}(B_t^{x,x},\overline B_s^{y,y})^p\big]}{c^p}\leq\frac{C_p(ts)^{p/2}}{c^p}\]
for some constant $C_p>0$ that only
depends on $p$.
In the case $d=1$, we can use
\begin{multline}
\label{eqn: 1D MILT Holder}
\alpha_{t,s}(B_t^{x,x},\overline{B}_s^{y,y}) = \int_{-\infty}^\infty  L_t^a(B^{x,x}_t)L_s^a(\overline{B}^{y,y}_s)da\\
\leq\|L_t(B^{x,x}_t)\|_2\|L_s(\overline{B}^{y,y}_s)\|_2
=\beta_{t}(B_t^{x,x})^{1/2}\beta_s(\overline{B}_s^{y,y})^{1/2}
\end{multline}
together with an application of H\"older's inequality,
Brownian scaling, and the invariance of self-intersection local times under translations that
\begin{align*}
    \E\big[\alpha_{t,s}(B_t^{x,x},\overline B_s^{y,y})^p\big]&
    \leq\E\big[\beta_t(B_t^{x,x})^{p}\big]^{1/2}
    \E\big[\beta_s(\overline B_s^{y,y})^{p}\big]^{1/2}
    =(ts)^{3p/4}\E\big[\beta_1(B_1^{0,0})^{p}\big].
\end{align*}
Thus, for $d=1$, we have
\[\frac{\E\big[\alpha_{t,s}(B_t^{x,x},\overline B_s^{y,y})^p\big]}{c^p}\leq\frac{C_p(ts)^{3p/4}}{c^p}\]
for a constant $C_p>0$ that only depends on $p$.

In summary, thanks to \eqref{eqn: MILT Markov}, we get that
for every $q>0$,
\[\sup_{x,y\in\mathbb R^d}\P\big[\mathcal{C}_{t,s}(x,y)>c]^{1/2}\lesssim (st)^q
\qquad\text{as }(s,t)\to0\]
for both $d=1,2$.
Combining this with
\eqref{eqn: Upper Bound First Estimate},
\eqref{eqn: Upper Bound Fast Decay},
\eqref{eqn: Upper Bound Main Term},
\eqref{eqn: Upper Bound Main Term Proof},
and
\eqref{eqn: second line part 2},
this concludes the proof of
\eqref{eq: Main Upper Bound}.

It now remains only to prove the H\"older-type inequality \eqref{eqn: MILT Holder}.
Using the explicit moment formula \cite[Equation (5.17)]{GLP} we have 
\begin{align}\label{eqn: pth moment}
&\notag\E[\alpha_{t,s}(B_t^{x,x},\overline{B}_s^{y,y})^p] =\\
&\notag\frac{1}{p_t(0)p_s(0)}\int_{(\R^2)^p}\left(\left(\sum_{\sigma\in\Sigma_p}\int_{[0,t]_{\leq}^p}p_{u_1}(z_{\sigma(1)}-x)\prod_{j=2}^p p_{u_j-u_{j-1}}(z_{\sigma(j)}-z_{\sigma(j-1)})\cdot p_{t-u_p}(x-z_{\sigma(p)})du\right)\right.\\
&\cdot\left.\left(\sum_{\overline\sigma\in\Sigma_p}\int_{[0,s]_{\leq}^p}p_{u_1}(z_{\overline\sigma(1)}-y)\prod_{j=2}^p p_{u_j-u_{j-1}}(z_{\overline \sigma(j)}-z_{\overline \sigma(j-1)})\cdot p_{s-u_p}(y-z_{\overline \sigma(p)})du)\right)\right)dz
\end{align}
where $\Sigma_p$ is the set of permutations on $\{1,\ldots,p\}$. This formula was proven in \cite{GLP} specifically for $s=t$; this slightly more general statement is immediate from the arguments in that paper. Applying H\"older's inequality in the $dz$ integral in $\eqref{eqn: pth moment}$ implies \eqref{eqn: MILT Holder}.

\subsection{Optimality}\label{optimality proof} 
We now prove the optimality of Theorem \ref{Thm: Main Theorem}; that is, if
\eqref{Equation: V Lower Bound} and
\eqref{Equation: V Upper Bound} both hold, then 
\begin{align}
\label{eqn: Main Lower Bound}C(s,t)\gtrsim\min\{s,t\}^{1-d/2}\max\{s,t\}^{1-d/2-d/\kappa}\qquad\text{as }(s,t)\to0
\end{align}
in addition to \eqref{eq: Main Upper Bound}.
If we combine the elementary inequality $e^x-1\geq x$ for $x\geq0$,
the asymptotic $e^{t\log t/2\pi+s\log s/2\pi}\gtrsim1$ as $(s,t)\to0$,
and the fact that $\mathcal{B}^{d}_{t,s}(x,y)=\mathcal{B}^{d}_{t,s}(0,0)$
by translation invariance of self-intersection
local times, then it follows from
\eqref{eqn: FK Covariance} that 
\[C(s,t)\gtrsim\frac{1}{(st)^{d/2}}\int_{(\R^d)^2}\E[\mathrm{e}^{\mathcal{A}_{t,s}(x,y)+\mathcal{B}^{d}_{t,s}(0,0)}\mathcal{C}_{t,s}(x,y)]dxdy.\]
Next, it follows from \eqref{Equation: V Upper Bound}
that if we define $C_\kappa = c2^{(\kappa-1)_+}$,
then
\[\int_0^tV\big(B^{x,x}_t(s)\big)ds\:
\leq
\int_0^tc|x+B(s)|^\kappa +f\:ds\leq tC_\kappa(|x|^\kappa+M_t^\kappa)+ft,\]
where we recall the definition of $M_t$ in
\eqref{eqn: Brownian Bridge Suprema}. Thus,
since $e^{-f(s+t)}\gtrsim1$ as $(s,t)\to0$,
we get
\begin{align}
\label{eqn: Lower Bound Starting Point}
C(s,t)\gtrsim\frac{1}{(st)^{d/2}}\int_{(\R^d)^2}\E[e^{-C_\kappa t(|x|^\kappa +M_t^\kappa)-C_\kappa s(|y|^\kappa+M_s^\kappa)+\mathcal{B}_{t,s}(0,0)}\mathcal{C}_{t,s}(x,y)]dxdy.
\end{align}

Now it will be convenient for our purposes to split \eqref{eqn: Lower Bound Starting Point} into two pieces as follows:
\begin{align}
C(s,t) \gtrsim &\frac1{(st)^{d/2}}\left(\int_{(\R^d)^2}e^{-C_\kappa(t|x|^\kappa+s|y|^\kappa)}\E[\mathcal{C}_{t,s}(x,y)]dxdy\right.\label{eqn: Lower Bound Main Term} \\
 &\left.+\int_{(\R^d)^2}e^{-C_\kappa(t|x|^\kappa+s|y|^\kappa)}\E[(e^{-C_\kappa(tM_t^\kappa+sM_s^\kappa)+\mathcal{B}^{d}_{t,s}(0,0)}-1)\mathcal{C}_{t,s}(x,y)]dxdy\right).\label{eqn: Lower Bound Fast Decay Term}
\end{align}
Indeed,
Lemma \ref{lem: t+s lower bound} a) implies that 
\begin{align}\label{eqn: Lower Bound Main Term Bound}
\eqref{eqn: Lower Bound Main Term} \asymp (ts)^{1-d/2}(t+s)^{-d/\kappa}\asymp\min\{s,t\}^{1-d/2}\max\{s,t\}^{1-d/2-d/\kappa}\qquad\text{as }(s,t)\to0;
\end{align}
hence \eqref{eqn: Main Lower Bound}
will be proved if we show that 
\eqref{eqn: Lower Bound Fast Decay Term} is of smaller order than this lower bound as $(s,t)\to0$.

Toward this end, by Tonelli's theorem and H\"older's inequality, we have that
\begin{multline}\label{eqn: Lower Bound Decay Term Holder Product}
|\eqref{eqn: Lower Bound Fast Decay Term}|
\leq\E\left[(e^{-C_\kappa(tM_t^\kappa+sM_s^\kappa)+\mathcal{B}^{d}_{t,s}(0,0)}-1)^2\right]^{1/2}\\\times\frac1{(st)^{d/2}}\E\left[\left(\int_{(\R^d)^2}e^{-C_\kappa(t|x|^\kappa+s|y|^\kappa}\mathcal{C}_{t,s}(x,y)dxdy)\right)^2\right]^{1/2}
\end{multline}
By Lemma \ref{lem: t+s lower bound} b),
the second line in \eqref{eqn: Lower Bound Decay Term Holder Product} is asymptotic to $\min\{s,t\}^{1-d/2}\max\{s,t\}^{1-d/2-d/\kappa}$.
Thus, it suffices to show that the first line
of \eqref{eqn: Lower Bound Decay Term Holder Product}
vanishes as $(s,t)\to0$.
By Brownian scaling (in particular, \eqref{eqn: Bd scaling}), it is clear that
\[\lim_{(s,t)\to0}(e^{-C_\kappa(tM_t^\kappa+sM_s^\kappa)+\mathcal{B}^{d}_{t,s}(0,0)}-1)^2=0\qquad\text{almost surely.}\]
Finally, to take the limit outside the expectation
on the first line of \eqref{eqn: Lower Bound Decay Term Holder Product}, we can observe that this expression is uniformly integrable by the de-la-Vall\'ee-Poussin criterion:
\[\displaystyle\sup_{\{s,t:\max(s,t)<\vartheta\}}\E[|e^{-C_\kappa(tM_t^\kappa+sM_s^\kappa)+\mathcal{B}^{d}_{t,s}(0,0)}-1|^3]<\infty\qquad \text{ for } \vartheta>0\ \text{ sufficiently small}.\]
Indeed, this can be easily seen by expanding the cube and using the triangle inequality, applying H\"older's inequality, and then using Propositions \ref{Prop: 1D SILT Bound} and \ref{Prop: 2D SILT Bound}, along with the fact that $M_t\geq0$.
With this, the proof of \eqref{eqn: Main Lower Bound},
and therefore of Theorem \ref{Thm: Main Theorem},
is complete.

\subsection{Proof of Proposition \ref{Prop: Expectation Bound}}\label{expectation bound proof}

By combining \eqref{eqn: FK Expectation},
$\mathcal P^d(t,t)^{1/2}\asymp t^{-d/2}$,
the bounds \eqref{Equation: V Lower Bound} and \eqref{Equation: V Upper Bound},
and $e^{bt},e^{-ft}\asymp1$, we get
\begin{align}\label{eqn: expectation double bound}
    \frac{1}{t^{d/2}}\int_{\R^d}\E\big[e^{-\int_0^tc|x+B_t^{0,0}(u)|^\kappa du + \mathcal{D}^d_t(x)}\big] dx \lesssim\E\big[\mathrm{Tr}[e^{-tH}]\big]\lesssim\frac{1}{t^{d/2}}\int_{\R^d}\E\big[e^{-\int_0^ta|x+B_t^{0,0}(u)|^\kappa  du + \mathcal{D}^d_t(x)}\big] dx.
\end{align}
By applying the change of variables $u\mapsto ut$, the Brownian scaling coupling used in Definition \ref{def: BB Couplings}, and the translation invariance of the self-intersection local time, we get that for $\theta=a,c$,
\begin{align}\label{eqn: matching integral CoV}
\nonumber
&\frac{1}{t^{d/2}}\int_{\R^d}\E[e^{-\int_0^t\theta|x+B_t^{0,0}(u)|^\kappa du + t^{(4-d)/2}\mathcal{D}^d_1(x)}] dx\\
\nonumber
&\hspace{1in}= \frac{1}{t^{d/2}}\int_{\R^d}\E[e^{-\int_0^1\theta|t^{1/\kappa}x+t^{1/\kappa}B_t^{0,0}(ut)|^\kappa  du + t^{(4-d)/2}\mathcal{D}^d_1(0)} ]dx \notag\\
&\hspace{1in}=\frac{1}{t^{d/2}} \int_{\R^d}\E[e^{-\int_0^1\theta|t^{1/\kappa}x+t^{1/\kappa+1/2}B_1^{0,0}(u)|^\kappa  du + t^{(4-d)/2}\mathcal{D}^d_1(0)}] dx.
\end{align}
Now we use the change of variables $x\mapsto t^{-1/\kappa}x$ to get
\begin{align}\label{eqn: matching integral pretriangle}
\eqref{eqn: matching integral CoV} = t^{-d/2-d/\kappa}\int_{\R^d}\E[e^{-\int_0^1\theta|x+t^{1/\kappa+1/2}B_1^{0,0}(u)|^\kappa  du + t^{(4-d)/2}\mathcal{D}^d_1(0)}] dx.
\end{align}
At this point, it suffices to prove that the integral in \eqref{eqn: matching integral pretriangle} converges to a finite positive number as
$t\to0$.

Toward this end, we note that for every $x\in\mathbb R^d$,
\[\lim_{t\to0}e^{-\int_0^1\theta|x+t^{1/\kappa+1/2}B_1^{0,0}(u)|^\kappa  du + t^{(4-d)/2}\mathcal{D}^d_1(0)}=e^{-\theta|x|^\kappa}
\qquad\text{almost surely}.\]
Given that the integral of this limit is finite,
it suffices to show that we can take the $t\to0$ limit outside the expectation and the $dx$ integral.

Choose $0<t_0\leq1$ according to Proposition \ref{Prop: 2D SILT Bound} so that \begin{align}
\label{eqn: Sup e^gamma over t}
\sup_{0<t\leq t_0}\E\big[\exp(2t\gamma_{1}(B_{1}^{0,0}))\big]<\infty.
\end{align}
By \eqref{eqn: min triangle inequality}, we have 
\begin{align*}
\sup_{t\in(0,t_0]}\exp\left({-\int_0^1\theta|x+t^{1/\kappa+1/2}B_1^{0,0}(u)|^\kappa  du}\right) &\leq \exp\left({-\theta\int_0^1|x|^{\kappa\wedge1}-|t_0^{1/\kappa+1/2}M_1|^{\kappa\wedge1}-1du}\right) \\&=
\exp\left({-\theta\left(|x|^{\kappa\wedge1}-|t_0^{1/\kappa+1/2}M_1|^{\kappa\wedge1}-1\right)}\right) 
\end{align*}
where we recall the notation $M_t$
from \eqref{eqn: Brownian Bridge Suprema}.
Then, by H\"older's inequality, 
\begin{multline}\label{eqn: DCT Bound}
\int_{\mathbb R^d}\sup_{t\in(0,t_0]}\E[e^{-\int_0^1\theta|x+t^{1/\kappa+1/2}B_1^{0,0}(u)|^\kappa  du + \mathcal{D}^d_{t}(0)}] dx
\leq 
\E\left[\exp\left({2\theta\left(|M_1|^{\kappa\wedge1}+1\right)}\right)\right]^{1/2}\\
\times
\sup_{t\in(0,t_0]}\E\left[\exp\left(2t^{(4-d)/2}\mathcal{D}_{1}^d(0)\right)\right]^{1/2}\int_{\mathbb R^d}e^{-\theta|x|^{\kappa\land 1}}dx<\infty,
\end{multline}
where we again use \cite[Remark 3.1]{BesselBridgeExponentialMoments} to get that $M_1$ has finite exponential moments,
as well as \eqref{eqn: Sup e^gamma over t}
and Proposition \ref{Prop: 1D SILT Bound} to bound
the exponential moment of $\mathcal D_1^d(0)$.
Hence we can interchange the limit with the $dx$ integral by the dominated convergence theorem.
As for interchanging the limit with the expectation,
by uniform integrability,
it suffices to check that
there exists some $t_0>0$ and $p>1$ such that
\[\sup_{0<t\leq t_0}\E[(e^{-\int_0^1\theta|x+t^{1/\kappa+1/2}B_1^{0,0}(u)|^\kappa  du + t^{(4-d)/2}\mathcal{D}^d_1(0)})^p]<\infty.\] 
This can be done with exactly the same estimates that we have just used for the dominated convergence argument. The proof of Proposition
\ref{Prop: Expectation Bound} is thus complete.

\subsection{Proof of Lemma \ref{lem: t+s lower bound}}\label{Proof of lemma}

Looking at the integral expressions in Lemma \ref{lem: t+s lower bound}, it is clear that it will be helpful to have formulas for the first and second moments of the mutual intersection local time. The following propositions give us convenient representations of those moments in terms of the integrals of certain Gaussian densities.
\begin{prop}\label{Prop: approx Identity Limit}
Fix $d\in\{1,2\}$, $x,y\in\R^d$ and $s,t>0$. Then, 
\begin{align}
    \E[\alpha_{t,s}(B_t^{x,x},\overline{B}_s^{y,y})] = \int_0^t\int_0^s \frac{\exp(-\frac{|x-y|^2}{2z(s,t,u,v)})}{(2\pi z(s,t,u,v))^{d/2}}dudv = \int_0^t\int_0^s p_{z(s,t,u,v)}(x-y)dudv
\end{align}
where 
\[z(s,t,u,v) = \frac{u(t-u)}{t}+\frac{v(s-v)}{s}\]
\end{prop}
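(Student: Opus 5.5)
The plan is to compute the first moment through the $\epsilon$-approximations of Definition \ref{def: approx MILT and SILT 2D}, where Fubini applies trivially, and then pass to the limit $\epsilon\to0$. For fixed $\epsilon>0$, Tonelli's theorem gives
\[
\E[\alpha^\epsilon_{t,s}(B_t^{x,x},\overline B_s^{y,y})]=\int_0^t\int_0^s\E\big[p_\epsilon(B_t^{x,x}(u)-\overline B_s^{y,y}(v))\big]\,dv\,du .
\]
By the coupling in Definition \ref{def: BB Couplings}, $B_t^{x,x}(u)=x+\sqrt t B_1^{0,0}(u/t)$. This is Gaussian with mean $x$ and covariance $\frac{u(t-u)}{t}I_d$, since a standard bridge at time $r$ has variance $r(1-r)$. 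Likewise, $\overline B_s^{y,y}(v)\sim N(y,\frac{v(s-v)}{s}I_d)$, and the two are independent. Hence the difference $B_t^{x,x}(u)-\overline B_s^{y,y}(v)$ is $N(x-y,\,z(s,t,u,v)I_d)$.

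The Gaussian expectation can then be done explicitly using the semigroup property $p_a*p_b=p_{a+b}$, together with the evenness of $p$:
\[
\E\big[p_\epsilon(B_t^{x,x}(u)-\overline B_s^{y,y}(v))\big]=(p_\epsilon*p_{z})(x-y)=p_{\epsilon+z(s,t,u,v)}(x-y).
\]
This gives the closed form
\[
\E[\alpha^\epsilon_{t,s}]=\int_0^t\int_0^s p_{\epsilon+z(s,t,u,v)}(x-y)\,dv\,du .
\]

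Next I let $\epsilon\to0$ on the right-hand side by dominated convergence. The integrand converges pointwise on $(0,t)\times(0,s)$ to $p_z(x-y)$, and it is bounded by $(2\pi(\epsilon+z))^{-d/2}\le(2\pi z)^{-d/2}$. So it suffices to check that $\int_0^t\int_0^s z^{-d/2}\,du\,dv<\infty$. For $d=1$ this is clear. For $d=2$, near a corner such as $(u,v)=(0,0)$ we have $z\gtrsim u+v$, and $\int\int (u+v)^{-1}\,du\,dv$ is finite in two variables. The same holds at the other three corners, so the bound is integrable.

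The main obstacle is identifying the limit on the left-hand side with $\E[\alpha_{t,s}]$. Proposition \ref{Prop: MILT convergence} gives only convergence in probability when $d=2$, so I need uniform integrability of $\{\alpha^\epsilon_{t,s}\}_\epsilon$. This follows from the uniform exponential moment bound $\sup_\epsilon\E[\exp(c\alpha^\epsilon_{t,s})]<\infty$ in Proposition \ref{Prop: MILT UI}. That bound holds for small $s,t$. For general $s,t$ I would instead use a uniform second moment bound, which can be computed the same way: the second moment of $\alpha^\epsilon$ reduces by Gaussian calculus to an explicit integral, bounded uniformly in $\epsilon$, as in the moment formula \eqref{eqn: pth moment} from \cite{GLP}.

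For $d=1$, $\alpha_{t,s}$ is defined through local times, and I would run the same scheme. Since $p_\epsilon*L^a$ converges to the local time, the occupation formula gives $\alpha^\epsilon_{t,s}=\langle L_t,\,p_\epsilon*L_s\rangle\to\langle L_t,L_s\rangle$ almost surely, by continuity of local times with compact support. Uniform integrability then follows from \eqref{eqn: 1D MILT quadratic form inequality} and Proposition \ref{Prop: 1D SILT Bound}. Combining the two limits yields the stated formula.
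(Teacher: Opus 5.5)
Your proposal is correct and follows essentially the paper's route. Both use Tonelli on the $\epsilon$-approximation, the Gaussian convolution identity giving $p_{\epsilon+z(s,t,u,v)}(x-y)$, and dominated convergence in $(u,v)$. The paper's dominating function, obtained from Young's inequality, is a multiple of $\bigl(\frac{u(t-u)}{t}\bigr)^{-d/4}\bigl(\frac{v(s-v)}{s}\bigr)^{-d/4}$; by AM--GM it dominates your $z^{-d/2}$, so either bound works.

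Your extra uniform-integrability step, identifying $\lim_{\epsilon\to0}\E[\alpha^\epsilon_{t,s}]$ with $\E[\alpha_{t,s}]$, is something the paper leaves implicit, and it is sound. For $d=1$, \eqref{eqn: 1D MILT quadratic form inequality} as written only bounds the limit $\alpha_{t,s}$. The domination you actually need is $\alpha^\epsilon_{t,s}=\langle L_t(B_t^{x,x}),p_\epsilon*L_s(\overline B_s^{y,y})\rangle\le\|L_t(B_t^{x,x})\|_2\|L_s(\overline B_s^{y,y})\|_2$, which follows from Cauchy--Schwarz plus Young's inequality.
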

\begin{prop}\label{Prop: Product Expectation is Gaussian pdf}
Fix $d\in\{1,2\}$, $x_i,y_i\in\R^d$ for $i=1,2$ and $s,t>0$. Then,
\[\E[\alpha_{t,s}(B_t^{x_1,x_1},\overline{B}_s^{y_1,y_1})\alpha_{t,s}(B_t^{x_2,x_2},\overline{B}_s^{y_2,y_2})]=\int_{[0,t]^2}\int_{[0,s]^2}g^d_{K(s,t,u,v)}(x_1-y_1,x_2-y_2)dudv
\]
where $g^d_{K(s,t,u,v)}$ is a centered multivariate Gaussian density on $\R^d\times\R^d$ with covariance matrix $K(s,t,u,v)=K_0(s,t,u,v)\otimes I_d$
for the matrix function $K_0$
in \eqref{eqn. Bridge difference covariance}.
\end{prop}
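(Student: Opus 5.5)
The plan is to regularize, compute the regularized second moment exactly as a Gaussian integral, and then remove the regularization. The key structural fact is the coupling in Definition \ref{def: BB Couplings}: $B_t^{x_i,x_i}(u)=x_i+B_t^{0,0}(u)$ and $\overline B_s^{y_i,y_i}(v)=y_i+\overline B_s^{0,0}(v)$. Hence both mutual intersection local times are built from the \emph{same} pair of independent centered bridges $(B_t^{0,0},\overline B_s^{0,0})$, differing only through the deterministic shifts $x_i-y_i$.

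\textbf{Step 1 (regularized moment).} For $\epsilon>0$, Fubini and \eqref{eqn: approx milt} give
\[
\E\big[\alpha^\epsilon_{t,s}(B_t^{x_1,x_1},\overline B_s^{y_1,y_1})\,\alpha^\epsilon_{t,s}(B_t^{x_2,x_2},\overline B_s^{y_2,y_2})\big]
=\int_{[0,t]^2}\int_{[0,s]^2}\E\Big[\prod_{i=1,2}p_\epsilon\big(x_i-y_i+G_i\big)\Big]\,du\,dv,
\]
where $G_i=B_t^{0,0}(u_i)-\overline B_s^{0,0}(v_i)$. The vector $(G_1,G_2)$ is centered Gaussian on $\R^d\times\R^d$ with covariance $K_0\otimes I_d$, where the bridge covariances $\min(u_1,u_2)-u_1u_2/t$ and $\min(v_1,v_2)-v_1v_2/s$ add by independence:
\[
K_0(s,t,u,v)=\begin{pmatrix} \frac{u_1(t-u_1)}{t}+\frac{v_1(s-v_1)}{s} & u_1\wedge u_2-\frac{u_1u_2}{t}+v_1\wedge v_2-\frac{v_1v_2}{s}\\[2pt] u_1\wedge u_2-\frac{u_1u_2}{t}+v_1\wedge v_2-\frac{v_1v_2}{s} & \frac{u_2(t-u_2)}{t}+\frac{v_2(s-v_2)}{s}\end{pmatrix}.
\]
This is the matrix to be recorded as \eqref{eqn. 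Bridge difference covariance}. Since $p_\epsilon$ is a Gaussian density and $G$ is symmetric, the inner expectation equals $g^d_{K+\epsilon I}(x_1-y_1,x_2-y_2)$: a convolution of Gaussians is Gaussian with the covariances added. This is the same computation that underlies Proposition \ref{Prop: approx Identity Limit}.

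\textbf{Step 2 (left-hand side as $\epsilon\to0$).} By Proposition \ref{Prop: MILT convergence} for $d=2$, $\alpha^\epsilon\to\alpha$ in probability. For $d=1$, $\alpha^\epsilon\to\alpha$ almost surely, using continuity of local times and the occupation formula. The uniform exponential bounds of Proposition \ref{Prop: MILT UI} make $\{(\alpha^\epsilon)^2\}$ uniformly integrable. The product of the two regularized MILTs therefore converges in $L^1$, by Cauchy--Schwarz together with $L^2$ convergence of each factor.

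\textbf{Step 3 (right-hand side as $\epsilon\to0$; main obstacle).} Pointwise, $g_{K+\epsilon I}\to g_K$ wherever $\det K_0>0$, which holds off a null set of $(u,v)$. For domination I use $K+\epsilon I\succeq K$, which gives
\[
g_{K+\epsilon I}(z)\le (2\pi)^{-d}\det(K_0)^{-d/2}.
\]
So I need $\det(K_0)^{-d/2}\in L^1([0,t]^2\times[0,s]^2)$, which is the delicate case $d=2$.

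\emph{Route 1 (direct).} For $u_1\le u_2$ and $v_1\le v_2$, $\det K_0$ is the determinant of a covariance of increments of a Gaussian process. Its order is $(|u_1-u_2|+|v_1-v_2|)\cdot z(s,t,u_1,v_1)$, together with similar behavior near the endpoints. The integrals of $1/(|\Delta u|+|\Delta v|)$ over a two-dimensional neighbourhood, and of $1/z$ over $[0,t]\times[0,s]$, are both finite.

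\emph{Route 2 (fallback).} Avoid the determinant estimate entirely: Fatou applied to the $\epsilon$-integrand, followed by monotonicity in $\epsilon$ of $g_{K+\epsilon I}(0)$, reduces domination to finiteness of $\E[\alpha_{t,s}^2]$. That finiteness is given by \cite[Lemma 5.14]{GLP} together with \eqref{eqn: MILT Holder}.

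Either route lets dominated convergence pass the limit inside the $du\,dv$ integral, and equating the two limits yields the proposition.
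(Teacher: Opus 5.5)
Your proposal is correct. Its skeleton is the paper's: regularize, identify the integrand as $g^d_{K+\epsilon I_{2d}}(x_1-y_1,x_2-y_2)$, then apply dominated convergence with the majorant $(2\pi)^{-d}\det(K_0)^{-d/2}$. What differs is how you show this majorant is integrable. Your Step 2 also makes explicit the convergence of the left-hand side, which the paper leaves implicit.

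The paper proves integrability by hand. With $u_i=ta_i$, $v_i=sb_i$ one has $K_0=tA(a_1,a_2)+sA(b_1,b_2)$. Minkowski's determinant inequality \eqref{eqn: Minkowski} and AM--GM then bound $\det(K_0)^{-d/2}$ by a constant times $\det A(a)^{-d/4}\det A(b)^{-d/4}$. The closed form $\det A(a_1,a_2)=(a_1\wedge a_2)(1-a_1\vee a_2)|a_1-a_2|$ reduces this to a Dirichlet-type integral, finite since $-d/4>-1$. This argument is elementary, self-contained, and valid for every $s,t>0$.

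Your Route 2 is softer. Since $g_{K+\epsilon I}(z)\le g_{K+\epsilon I}(0)\uparrow g_K(0)$ as $\epsilon\downarrow0$, monotone convergence identifies $\int g_K(0)\,du\,dv$ with $\lim_{\epsilon}\E[\alpha^\epsilon_{t,s}(B^{0,0}_t,\overline{B}^{0,0}_s)^2]$. So integrability follows from the same uniform second-moment bound you already need for Step 2. This avoids all determinant algebra but imports external moment estimates, and two details need patching:
\begin{itemize}
\item In $d=2$, Proposition \ref{Prop: MILT UI} is only stated for $s,t<\theta_c$. To cover general $s,t>0$ you need the scaling fact that $\alpha^\epsilon_{\lambda t,\lambda s}$ has the law of $\lambda\,\alpha^{\epsilon/\lambda}_{t,s}$ (with rescaled starting points).
\item In $d=1$, that proposition says nothing about $\alpha^\epsilon$. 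However, $\alpha^\epsilon_{t,s}\le\tfrac12(\beta_t+\beta_s)$, by the occupation formula and $ab\le\tfrac12(a^2+b^2)$, gives the required uniform bounds.
\end{itemize}

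Do not rely on Route 1 as written. The quantity $(|u_1-u_2|+|v_1-v_2|)\,z(s,t,u_1,v_1)$ is not a lower bound for $\det K_0$ near the right endpoints. Indeed, $\det K_0\le z(s,t,u_1,v_1)\,z(s,t,u_2,v_2)\to0$ when $u_2\to t$ and $v_2\to s$ with $u_1,v_1$ interior. The paper's Minkowski/AM--GM reduction is the clean way to make a direct determinant argument rigorous.
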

We will now finish the proof of Lemma \ref{lem: t+s lower bound} modulo these two propositions which we leave to the end (see Sections \ref{Proof of approx Identity Limit} and \ref{Proof of Product Expectation} respectively).

\subsubsection{Proof of Lemma \ref{lem: t+s lower bound} Part a)}
Using Proposition \ref{Prop: approx Identity Limit} and Tonelli's Theorem we have 

\begin{align}\label{eqn: first moment exponential}
\int_{\R^d}\int_{\R^d}&e^{-C(t^\eta|x|^\kappa+s^\eta|y|^\kappa)}\E[\alpha_{t,s}\notag(B_t^{x,x},\overline{B}_s^{y,y})]dxdy \\&=\notag \int_{\R^d}\int_{\R^d}e^{-C(t^\eta|x|^\kappa+s^\eta|y|^\kappa)}\int_0^t\int_0^sp_{z(s,t,u,v)}(x-y)dudvdxdy
\\&=\int_0^t\int_0^s\int_{\R^d}\int_{\R^d}e^{-C(t^\eta|x|^\kappa+s^\eta|y|^\kappa)}p_{z(s,t,u,v)}(x-y)dudvdxdy.
\end{align}

Now we apply the substitution $w=y-x$, 
\begin{align}\label{eqn: convolution form}
\eqref{eqn: first moment exponential} &= \notag\int_0^t\int_0^s\int_{\R^d}p_{z(s,t,u,v)}(w)\int_{\R^d}e^{-C(t^\eta|x|^\kappa+s^\eta|x+w|^\kappa)}dxdwdudv\\&=\int_0^t\int_0^s\E\left[\int_{\R^d}e^{-C(t^\eta|x|^\kappa+s^\eta|x|+G_{z(s,t,u,v)}|^\kappa)}dx\right]dudv
\end{align}

where $G_{z(s,t,u,v)}$ is a centered $d$-dimensional Gaussian vector with covariance $z(s,t,u,v)I_d$.\\
Letting $T:=t+s$ and using the substitution $x=T^{-\eta/\kappa}x$ yields,
\begin{align}\label{eqn: T rescaling}
\int_{\R^d}e^{-C(t^\eta|x|^\kappa+s^\eta|x+w|^\kappa)}dx =T^{-d\eta/\kappa}\int_{\R^d}e^{-C((\frac{t}{T})^\eta|x|^\kappa+(\frac{s}{T})^\eta|x+T^{\eta/\kappa}w|^\kappa)}dx.
\end{align}
Now we apply the rescalings $a=\frac{u}{t}$ and $b=\frac{v}{s}$ for $a,b\in[0,1]$
to get 
\begin{align}\label{eqn: z rescaling}
z(s,t,u,v)=T(\frac{t}{T}a(1-a)+\frac{s}{T}b(1-b)) =: Tz'(s,t,a,b)
\end{align}
Returning to \eqref{eqn: T rescaling} we apply the above scaling to get
\begin{align*}
\eqref{eqn: first moment exponential}&= 
tsT^{-d\eta/\kappa}\int_0^1\int_0^1\E\left[\int_{\R^d}e^{-C((\frac{t}{T})^\eta|x|^\kappa+(\frac{s}{T})^\eta|x+T^{\eta/\kappa}G_{Tz'(s,t,a,b)}|^\kappa)}dx\right]dadb.
\end{align*}
It now only remains to show that
the integral above can be bounded away from zero and infinity when we take $s$ and $t$ sufficiently close to zero. 

Toward this end, we first prove
boundedness away from infinity:
Since $\frac{t}{T},\frac{s}{T}\in[0,1]$ and $0<\eta\leq1$
(hence
$(\frac{t}{T})^\eta+(\frac{s}{T})^\eta\geq\frac{t+s}{T}=1$), for every $w\in\R^d$,
one has
\begin{align}\label{eqn: min bound}
(\frac{t}{T})^\eta|x|^\kappa+(\frac{s}{T})^\eta|x+w|^\kappa\geq\min\{|x|^\kappa,|x+w|^\kappa\}.
\end{align}
Therefore, we get
\begin{align}
\label{eqn: gaussian conv upper bound}
\int_0^1\int_0^1\E\left[
\int_{\R^d}e^{-C((\frac{t}{T})^\eta|x|^\kappa+(\frac{s}{T})^\eta|x+T^{\eta/\kappa}G_{Tz'(s,t,a,b)}|^\kappa)}dx
\right]dadb\leq 2\int_{\R^d}e^{-C|x|^\kappa}dx<\infty
\end{align}
uniformly over all $s,t>0$.
We now prove boundedness away from zero: Letting $U\subset\R^d$ denote the unit ball, we have
\begin{align}\label{eqn: MILT Convolution Lower Bound}
   \notag\int_{\R^d}e^{-C((\frac{t}{T})^\eta|x|^\kappa+(\frac{s}{T})^\eta|x+T^{\eta/\kappa}w|^\kappa)}dx&\geq\int_{U}e^{-C((\frac{t}{T})^\eta|x|^\kappa+(\frac{s}{T})^\eta|x+T^{\eta/\kappa}w|^\kappa)}dx \notag\\&\geq |U|e^{-C(\frac tT)^\eta}\inf_{|x|\leq1}e^{-C(\frac{s}{T})^\eta|x+T^{\eta/\kappa}w|^\kappa} \notag\\&= |U|e^{-C(\frac tT)^\eta}e^{-C(\frac{s}{T})^\eta(1+T^{\eta/\kappa}|w|)^\kappa}
   \notag\\
   &\geq|U|e^{-C(\frac tT)^\eta}e^{-Cc_\kappa(\frac{s}{T})^\eta(1+T^{\eta}|w|^\kappa)},
\end{align}
where the last inequality follows from $(|x|+|y|)^\kappa\leq c_\kappa(|x|^\kappa+|y|^\kappa)$ for all $x,y\in\mathbb R^d$ with the constant $c_\kappa=2^{(\kappa-1)_+}$.
Since $\frac sT,\frac tT\leq1$, one has
\[|U|e^{-C(\frac tT)^\eta}e^{-Cc_\kappa(\frac{s}{T})^\eta(1+T^{\eta}|w|^\kappa)}
\geq|U|e^{-C-Cc_\kappa}e^{-Cc_\kappa T^{\eta}|w|^\kappa}\]
uniformly in $s,t>0$. Thus, it suffices
to prove that
\begin{align}
\label{eqn: Gaussian open set lower bound}
\lim_{(s,t)\to0}
\int_0^1\int_0^1\E\left[
e^{-Cc_\kappa T^{\eta}|G_{Tz'(s,t,a,b)}|^\kappa}
\right]dadb
>0.
\end{align}
Note that 
$T^{\eta}|G_{Tz'(s,t,a,b)}|^\kappa$ is equal in distribution to $T^{\eta}(Tz'(s,t,a,b))^{\kappa/2}|G_1|^\kappa$,
and that \[T^{\eta}(Tz'(s,t,a,b))^{\kappa/2}\to0\]
pointwise in $a,b$ as $s,t\to0$. Thus,
by dominated convergence
(whereby we can take the limit inside the $dadb$ integral and the expectation because $e^{-x}\leq 1$ for $x\geq0$),
\[\text{LHS of }\eqref{eqn: Gaussian open set lower bound}
=\int_0^1\int_0^1\E\left[\lim_{(s,t)\to0}
e^{-Cc_\kappa T^{\eta}(Tz'(s,t,a,b))^{\kappa/2}|G_1|^\kappa}
\right]dadb
=1.\]
The proof of Lemma \ref{lem: t+s lower bound} part a) is thus complete.

\subsubsection{Proof of Lemma \ref{lem: t+s lower bound} Part b)}
We begin by expanding and applying Proposition \ref{Prop: Product Expectation is Gaussian pdf},
whereby
\begin{multline}\label{eqn: expanded squared exponential milt}
\E\left[\left(\int_{(\R^d)^2}e^{-C(t^\eta|x|^\kappa+s^\eta|y|^\kappa)}\mathcal C_{t,s}(x,y)dxdy\right)^2\right] =\int_{(\R^d)^4}e^{-C(t^\eta|x_1|^\kappa+s^\eta|y_1|^\kappa)}e^{-C(t^\eta|x_2|^\kappa+s^\eta|y_2|^\kappa)}\\\times\int_{[0,t]^2}\int_{[0,s]^2}g^d_{K(s,t,u,v)}(x_1-y_1,x_2-y_2)dudvdxdy.
\end{multline}
Arguing like in part a) we can express this as 
\begin{align}\label{eqn: 2nd moment convolution representation}
&\eqref{eqn: expanded squared exponential milt}\notag = \\&\int_{[0,t]^2}\int_{[0,s]^2}\E\left[\int_{\R^d}e^{-C(t^\eta|x_1|^\kappa+s^\eta|x_1+G^1_{K(s,t,u,v)}|^\kappa)}dx_1\int_{\R^d}e^{-C(t^\eta|x_2|^\kappa+s^\eta|x_2+G^2_{K(s,t,u,v)}|^\kappa)}dx_2\right]dudv,
\end{align}
where $(G^1_{K(s,t,u,v)},G^2_{K(s,t,u,v)})$
is multivariate Gaussian with covariance $K(s,t,u,v)$.
Similarly to part a) we once again apply the rescalings $a_i = \frac{u_i}{t}$ and $b_i = \frac{v_i}{s}$ for $i=1,2$. Using this scaling and equation \eqref{eqn: T rescaling} gives us
\begin{align}\label{eqn: 2nd Moment Convolution final expression}
\eqref{eqn: 2nd moment convolution representation} = t^2s^2T^{-2d\eta/\kappa}\int_{[0,1]^4}\left(\E\left[\int_{\R^d}e^{-C((\frac{t}{T})^\eta|x_1|^\kappa+(\frac{s}{T})^\eta|x_1+T^{\eta/\kappa}G^1_{TK'(s,t,a,b)}|^\kappa)}dx_1\notag\right.\right.\\\left.\left.\cdot\int_{\R^d}e^{-C((\frac{t}{T})^\eta|x_2|^\kappa+(\frac{s}{T})^\eta|x_2+T^{\eta/\kappa}G^2_{TK'(s,t,a,b)}|)^\kappa)}dx_2\right]\right)dadb;
\end{align}
here $K'(s,t,a,b)=K'_0(s,t,a,b)\otimes I_d$,
where
\[K'_0(s,t,a,b)=\frac tTA(a_1,a_2)+\frac sTA(b_1,b_2)\]
for the matrix $A$ defined in \eqref{eqn: A matrix}
(see also the scaling \eqref{eqn: A matrix 2}).

Applying the inequality \eqref{eqn: min bound} gives us 
\[\int_{\R^d}e^{-C((\frac{t}{T})^\eta|x_i|^\kappa+(\frac{s}{T})^\eta|x_i+T^{\eta/\kappa}G^i_{TK'(s,t,a,b)}|^\kappa)}dx_i\leq 2\int_{\R^d}e^{-C|x_i|^\kappa}dx<\infty\] 
uniformly in $s,t>0$ for both $i=1,2$.
On the other hand, we can apply the inequality \eqref{eqn: MILT Convolution Lower Bound} and use the fact that $\frac{s}{T},\frac{t}{T}\leq1$ to get
\[\int_{\R^d}e^{-C((\frac{t}{T})^\eta|x_i|^\kappa+(\frac{s}{T})^\eta|x_i+T^{\eta/\kappa}G^i_{TK'(s,t,a,b)}|^\kappa)}dx_i\geq|U|e^{-C-Cc_\kappa}e^{-Cc_\kappa T^\eta|G^i_{TK'(s,t,a,b)}|^\kappa}\]
uniformly in $s,t>0$ for both $i=1,2$.
Therefore, it suffices to prove that
\begin{align}\label{eqn: 2nd moment final limit}
    \lim_{(s,t)\to0}\int_{[0,1]^4}\E\left[e^{-Cc_\kappa T^\eta|G^1_{TK'(s,t,a,b)}|^\kappa}e^{-Cc_\kappa T^\eta|G^2_{TK'(s,t,a,b)}|^\kappa}\right]dadb>0.
\end{align}
The proof of this
is identical to \eqref{eqn: Gaussian open set lower bound}; this completes the proof of Lemma \ref{lem: t+s lower bound} part b).

\subsection{Proof of Proposition \ref{Prop: approx Identity Limit}}\label{Proof of approx Identity Limit}
Recall that the Brownian Bridge in $\R^d$ is simply defined as a vector with an independent Brownian Bridge in each component. In particular, for $x\in\R^d$, $t>0$, and $u\in[0,t]$ we have,
\begin{align}\label{eqn: 2D Bridge Distribution}
    B_t^{x,x}(u)\sim\mathcal{N}\left(x,\frac{u(t-u)}{t}I_d\right).
\end{align}
By \eqref{eqn: approx milt},
\[\E[\alpha^\epsilon_{t,s}(B_t^{x,x},\overline{B}_s^{y,y})]=\int_0^t\int_0^s\E[p_\epsilon(B_t^{x,x}(u)-\overline{B}_s^{y,y}(v))]dudv.\]

Fix $x,y,u,v,s,t$.
Let $\Phi_{B}=\Phi_{B_t^{x,x}(u)}$ and $\Phi_{\overline{B}}=\Phi_{\overline{B}_s^{y,y}(v)}$ denote the respective densities of $B_t^{x,x}$ and $\overline{B}_s^{y,y}$. Then by the independence of the two bridges we have,
\begin{align*}
\E[p_\epsilon(B_t^{x,x}(u)-\overline{B}_s^{y,y}(v))] = 
\int_{(\R^d)^2}p_\epsilon(z_1-z_2)\Phi_B(z_1)\Phi_{\overline{B}}(z_2)dz_1dz_2.
\end{align*}
Applying Young's Convolution Inequality gives us
\begin{align*}
    \int_{(\R^d)^2}p_\epsilon(z_1-z_2)\Phi_B(z_1)\Phi_{\overline{B}}(z_2)dz_1dz_2
    &\leq\norm{p_\epsilon}_1\norm{\Phi_B}_2\norm{\Phi_{\overline{B}}}_2\\&=C\sqrt{\left(\frac{t}{u(t-u)}\right)^{d/2}}\sqrt{\left(\frac{s}{v(s-v)}\right)^{d/2}},
\end{align*}
where we used the fact that the $L^2$-norm of the $d$-dimensional Gaussian density with covariance matrix $\Sigma$ is given by $\frac{1}{(4\pi)^{d/4}\mathrm{det}(\Sigma)^{1/4}}$

Thus by the dominated convergence theorem we get,
\[\lim_{\epsilon\to0}\int_0^t\int_0^s\E[p_\epsilon(B_t^{x,x}(u)-\overline{B}_s^{y,y}(v))]dudv = \int_0^t\int_0^s\lim_{\epsilon\to0}\E[p_\epsilon(B_t^{x,x}(u)-\overline{B}_s^{y,y}(v))]dudv.\]

Consider now the integrand 
\[\lim_{\epsilon\to0}\E[p_\epsilon(B_t^{x,x}(u)-\overline{B}_s^{y,y}(v))]\]
and recall that we can write

\[\E[p_\epsilon(B_t^{x,x}(u)-\overline{B}_s^{y,y}(v))]=\E[p_\epsilon((x-y)+B_t^{0,0}(u)-\overline{B}_s^{0,0}(v))]\]

Notice that $B_t^{0,0}(u)-\overline{B}_s^{0,0}(v)$ is a centered Gaussian with covariance 
\[\left(\frac{u(t-u)}{t}+\frac{v(s-v)}{s}\right)I_d =: z(s,t,u,v)I_d\]

and so the convolution of Gaussian densities yields
\begin{align*}
\E[p_\epsilon((x-y)+B_t^{0,0}(u)-\overline{B}_s^{0,0}(v))] &= \int_{\R^d}p_\epsilon((x-y)+w)p_{z(s,t,u,v)}(w)dw \\&=
p_{\epsilon+z(s,t,u,v)}(x-y)
\end{align*}

Finally, we can conclude
\begin{align*}
\int_0^t\int_0^s\lim_{\epsilon\to0}\E[p_\epsilon(B_t^{x,x}(u)-\overline{B}_s^{y,y}(v))]dudv&=\int_0^t\int_0^s\lim_{\epsilon\to0}p_{\epsilon+z(s,t,u,v)}(x-y)dudv\\&=
\int_0^t\int_0^sp_{z(s,t,u,v)}(x-y)dudv
\end{align*}
completing the proof.

\subsection{Proof of Proposition \ref{Prop: Product Expectation is Gaussian pdf}}\label{Proof of Product Expectation}

Note that $\mathrm{Cov}\big[B_t^{x,x}(u),B_t^{x,x}(v)\big]=u\wedge v -\frac{uv}{t}$. 
Therefore, the vector of $\R$-valued Brownian Bridge differences
\[(B_t^{0,0}(u_1)-\overline{B}_s^{0,0}(v_1),B_t^{0,0}(u_2)-\overline{B}_s^{0,0}(v_2))\] has covariance matrix

\begin{align}\label{eqn. Bridge difference covariance}
K_0(s,t,u,v) = 
\begin{pmatrix}
\frac{u_1(t-u_1)}{t}+\frac{v_1(s-v_1)}{s}&(u_1\wedge u_2-\frac{u_1u_2}{t})+(v_1\wedge v_2-\frac{v_1v_2}{s})\\
(u_1\wedge u_2-\frac{u_1u_2}{t})+(v_1\wedge v_2-\frac{v_1v_2}{s})&\frac{u_2(t-u_2)}{t}+\frac{v_2(s-v_2)}{s}
\end{pmatrix}
\end{align}
Then, since a Brownian bridge in $\R^d$ has $d$ independent coordinates, the vector of the $\R^d$-valued Brownian Bridge differences
\[(B_t^{0,0}(u_1)-\overline{B}_s^{0,0}(v_1),B_t^{0,0}(u_2)-\overline{B}_s^{0,0}(v_2))\] has covariance matrix $K(s,t,u,v)=K_0(s,t,u,v)\otimes I_d$

Now let $g_{\Sigma}^d(w_1,w_2)$ be the centered Gaussian density on $\R^d\times\R^d$ with covariance matrix $\Sigma$. 
Then \eqref{eqn: approx milt} gives us 
\begin{align}\label{eqn: Expectation of squared approx MILT}
\E[&\alpha^\epsilon_{t,s}(B_t^{x_1,x_1},\overline{B}_s^{y_1,y_1})\alpha^\epsilon_{t,s}(B_t^{x_2,x_2},\overline{B}_s^{y_2,y_2})] \notag\\&= \int_{[0,t]^2}\int_{[0,s]^2}\E\left[p_\epsilon(B_t^{x_1,x_1}(u_1)-\overline{B}_s^{y_1,y_1}(v_1))p_\epsilon(B_t^{x_2,x_2}(u_2)-\overline{B}_s^{y_2,y_2}(v_2))\right] dudv
\notag
\\&=\int_{[0,t]^2}\int_{[0,s]^2}g^d_{K(s,t,u,v)+\epsilon I_{2d}}(x_1-y_1,x_2-y_2)dudv.
\end{align}
Note that the matrix $K(s,t,u,v)$ is positive definite
for almost every $u,v$ (i.e. $\det(K(s,t,u,v))=0$ may only occur when $u_1=u_2$, $v_1=v_2$, $u_i\in\{0,t\}$, or $v_i\in\{0,s\}$).
Outside of that defective null set, $g^d_{K(s,t,u,v)+\epsilon I_{2d}}\to g^d_{K(s,t,u,v)}$ as $\epsilon\to0$. Thus,
Proposition \ref{Prop: Product Expectation is Gaussian pdf} can be reduced to verifying that we can take the limit
outside the $dudv$ integral in \eqref{eqn: Expectation of squared approx MILT}.

Toward this end,
by the dominated convergence theorem, it suffices
to find an integrable function that dominates
$g^d_{K(s,t,\cdot,\cdot)+\epsilon I_{2d}}(x_1-y_1,x_2-y_2)$ almost everywhere for all $\epsilon>0$.
For this, we use the simple bounds
\begin{multline*}
|g^d_{K(s,t,u,v)+\epsilon I_{2d}}(x_1-y_1,x_2-y_2)|
\leq(2\pi)^{-d}\det(K(s,t,u,v)+\epsilon I_{2d})^{-1/2}\\
\leq(2\pi)^{-d}\det(K(s,t,u,v))^{-1/2}
=(2\pi)^{-d}\det(K_0(s,t,u,v))^{-d/2},
\end{multline*}
which holds whenever $K(s,t,u,v)\neq0$, and
where the first inequality on the second
line follows from the fact that adding the matrix $\epsilon I_{2d}$ can only increase the determinant
since it adds $\epsilon$ to every eigenvalue.
Thus, we only need the integrability of $\mathrm{det}(K_0(s,t,\cdot,\cdot))^{-d/2}$.
 
In order to simplify this computation we introduce the scaled matrix 
\begin{align}
\label{eqn: A matrix}
A(a_1,a_2)=\begin{pmatrix}
a_1-a_1^2&a_1\wedge a_2-a_1a_2\\
a_1\wedge a_2-a_1a_2&a_2-a_2^2
\end{pmatrix}
\end{align}
for $a_1,a_2\in[0,1]$.
Notice that under the scalings $u_i=ta_i$ and $v_i=sb_i$, we have
\begin{align}
\label{eqn: A matrix 2}
K_0(s,t,u,v) = tA(a_1,a_2)+sA(b_1,b_2)
\end{align}

A routine computation gives us 
\[\mathrm{det}\big(A(a_1,a_2)\big) =(a_1\wedge a_2)(1-(a_1\vee a_2))|a_1-a_2|\]
and Minkowski's determinant inequality yields
\begin{align}\label{eqn: Minkowski}
\mathrm{det}(K_0(s,t,u,v))\geq t^2\mathrm{det}(A(a_1,a_2))+s^2\mathrm{det}(A(b_1,b_2))
\end{align}
Given the scaling inequality \eqref{eqn: Minkowski} it suffices to show
\[\int_{[0,1]^4}(t^2\mathrm{det}(A(a_1,a_2))+s^2\mathrm{det}(A(b_1,b_2)))^{-d/2}dadb<\infty\]
For this, the AM-GM inequality gives us 
\begin{align*}
\int_{[0,1]^4}(t^2\mathrm{det}A(a_1,a_2)&+s^2\mathrm{det}A(b_1,b_2))^{-d/2}dadb\\&\leq C_{d,t,s}\int_{[0,1]^4}\mathrm{det}(A(a_1,a_2))^{-d/4}\mathrm{det}A((b_1,b_2))^{-d/4}dadb,
\end{align*}
where $C_{d,t,s}>0$ is independent of $u,v$.
Therefore, by Tonelli's theorem, it suffices to show
\[\int_{[0,1]^2}\mathrm{det}(A(a_1,a_2))^{-d/4}da_1da_2<\infty.\]

First we have,
\[\int_{[0,1]^2}(\mathrm{det}A(a_1,a_2))^{-d/4}da_1da_2=2\int_{0<a_1<a_2<1}(a_1(a_2-a_1)(1-a_2))^{-d/4}da_1da_2.\]

Then applying the changes of variables $x=a_1$ and $y=a_2-a_1$ gives us
\[\int_{0<a_1<a_2<1}(a_1(a_2-a_1)(1-a_2))^{-d/4}da_1da_2=\int_{\{x,y>0,x+y<1\}}(xy(1-x-y))^{-d/4}dxdy;\]
this is integrable for $d=1,2$ because $-d/4>-1$
(see, e.g., \cite[(5.14.2)]{NIST:DLMF}).
The proof of Proposition \ref{Prop: Product Expectation is Gaussian pdf} is now complete.

\newpage
\bibliographystyle{alpha}
\bibliography{Bibliography}

 \end{document}